\newtheorem{lemma}{Lemma}
\newtheorem{prop}[lemma]{Proposition}
\newtheorem{cor}[lemma]{Corollary}
\newtheorem{theorem}[lemma]{Theorem}
\newtheorem{exmp}{Example}
\newtheorem{definition}[lemma]{Definition}
\numberwithin{equation}{section}
\numberwithin{lemma}{section}
\newcommand{\C}{\mathbb{C}}    
\newcommand{\N}{\mathbb{N}}    
\newcommand{\PL}{\mathbb{P}}   
\newcommand{\R}{\mathbb{R}}    
\newcommand{\Z}{\mathbb{Z}}    
\newcommand{\wh}{\widehat}
\renewcommand{\le}{\leqslant}
\renewcommand{\ge}{\geqslant}
\newcommand{\bs}{\backslash}
\newcommand{\ol}{\overline}
\newcommand{\la}{\langle}
\newcommand{\ra}{\rangle}
\newcommand{\bo}{\mathscr{O}} 
\newcommand{\eps}{\epsilon}
\newcommand{\pp}{\mathsf{p}}
\newcommand{\DG}{{\mathsf{Diag}}}
\DeclareMathOperator{\FF}{\mathsf{F}}
\DeclareMathOperator{\UU}{\pmb{U}}
\newcommand{\er}{\eqref}
\newcommand{\mra}{{\mathring{a}}}
\newcommand{\mrb}{{\mathring{b}}}
\newcommand{\mrv}{{\mathring{v}}}
\newcommand{\mrw}{{\mathring{w}}}
\newcommand{\mrphi}{\mathring{\phi}}
\newcommand{\mrvgu}{\mathring{\vgu}}
\newcommand{\bp}{ \begin{proof} }
	\newcommand{\ep}{\hfill \end{proof} }
\newcommand{\be}{ \begin{equation} }
\newcommand{\ee}{ \end{equation} }
\newcommand{\tp}{\mathrm{T}}
\newcommand{\dm}{\mathsf{M}} 
\newcommand{\dn}{\mathsf{N}} 
\newcommand{\vgu}{\upsilon} 
\newcommand{\Vgu}{\Upsilon} 
\newcommand{\sr}{\operatorname{sr}}  
\newcommand{\bsr}{\operatorname{bsr}}  
\newcommand{\vmo}{\operatorname{vm}}
\newcommand{\bvmo}{\operatorname{bvm}}
\newcommand{\lp}[1]{l_{#1}(\mathbb{Z})}
\newcommand{\lrs}[3]{(l_{#1}(\mathbb{Z}))^{#2\times #3}}
\newcommand{\sq}{l(\mathbb{Z})} 
\newcommand{\setsp}{\;:\;}     
\newcommand{\td}{\boldsymbol{\delta}}  
\newcommand{\sd}{\mathcal{S}}  
\newcommand{\tz}{\mathcal{T}}  
\newcommand{\cM}{\mathcal{M}}
\newcommand{\cV}{\mathcal{V}}
\newcommand{\ctV}{\tilde{\mathcal{V}}}
\newcommand{\cW}{\mathcal{W}}
\newcommand{\cN}{\mathcal{N}}
\newcommand{\bpo}{\operatorname{bo}} 
\newcommand{\om}[1]{\omega_{#1}}
\newcommand{\ga}[1]{\gamma_{#1}}
\newcommand{\ka}[1]{\mathring{\gamma}_{#1}}
\newcommand{\dC}{\mathbb{C}^d}
\newcommand{\dN}{\mathbb{N}^d}
\newcommand{\dNN}{\mathbb{N}^d_0}
\newcommand{\dR}{\mathbb{R}^d}
\newcommand{\dZ}{\mathbb{Z}^d}
\newcommand{\dLp}[1]{L_{#1}(\mathbb{R}^d)}
\newcommand{\dLrs}[3]{(L_{#1}(\mathbb{R}^d))^{#2\times #3}}
\newcommand{\dlp}[1]{l_{#1}(\mathbb{Z}^d)}
\newcommand{\dlrs}[3]{(l_{#1}(\mathbb{Z}^d))^{#2\times #3}}
\newcommand{\dsq}{l(\mathbb{Z}^d)} 
\begin{document}
	

\title[Balanced multivariate quasi-tight framelets] {Multivariate quasi-tight framelets with high balancing orders derived from any compactly supported refinable vector functions}

\author{Bin Han}
\address[Bin Han]{Department of Mathematical and Statistical Sciences, University of Alberta}
\email{bhan@ualberta.ca}

\author{Ran Lu}
\address[Ran Lu]{Department of Mathematical and Statistical Sciences, University of Alberta}
\email[Corresponding author]{rlu3@ualberta.ca}
	
	\thanks{Research was supported in part by the Natural Sciences and Engineering Research Council of Canada (NSERC)}
	
	
	\makeatletter \@addtoreset{equation}{section} \makeatother
	
	\begin{abstract}
Generalizing wavelets by adding desired redundancy and flexibility,
 framelets (a.k.a. wavelet frames) are of interest and importance in many applications such as image processing and numerical algorithms.
Several key properties of framelets are high vanishing moments for sparse multiscale representation, fast framelet transforms for numerical efficiency, and redundancy for robustness.
However, it is a challenging problem to study and construct multivariate nonseparable framelets, mainly due to their intrinsic connections to factorization and syzygy modules of multivariate polynomial matrices.
Moreover, all known multivariate tight framelets derived from spline refinable scalar functions have only one vanishing moment, and framelets derived from refinable vector functions are barely studied yet in the literature. In this paper, we circumvent the above difficulties through the approach of quasi-tight framelets, which behave almost identically to tight framelets.
Employing the popular oblique extension principle (OEP),
from an arbitrary compactly supported $\dm$-refinable vector function $\phi$ with multiplicity greater than one, we prove that we can always derive
from $\phi$ a compactly supported multivariate quasi-tight framelet such that
\begin{enumerate}
\item[(i)] all the framelet generators have the highest possible order of vanishing moments;
\item[(ii)] its associated fast framelet transform has the highest balancing order and can
be implemented by convolution using only finitely supported filters (i.e., its fast framelet transform is compact).
\end{enumerate}
For a refinable scalar function $\phi$ (i.e., its multiplicity is one), the above item (ii) often cannot be achieved intrinsically but we show that we can always construct a compactly supported OEP-based multivariate quasi-tight framelet derived from $\phi$ satisfying item (i). We point out that constructing OEP-based quasi-tight framelets is closely related to the generalized spectral factorization of Hermitian trigonometric polynomial matrices.
Our proof is critically built on a newly developed result on the normal form of a matrix-valued filter,
which is of interest and importance in itself for greatly facilitating the study of refinable vector functions and multiwavelets/multiframelets.
This paper provides a comprehensive investigation on OEP-based multivariate quasi-tight multiframelets and their associated framelet transforms with high balancing orders. This deepens our theoretical understanding of multivariate quasi-tight multiframelets and their associated fast multiframelet transforms.
\end{abstract}
	
	\keywords{quasi-tight framelets, multiframelets, oblique extension principle, refinable vector functions, vanishing moments, balancing orders, compact framelet transform, normal form of a matrix-valued filter, generalized matrix factorization}
	
	\subjclass[2010]{42C40, 42C15, 41A25, 41A35, 15A23}
	\maketitle
	
	\pagenumbering{arabic}
	
\section{Introduction and Main Results}\label{sec:intro}
	
\subsection{Background}

Framelets (a.k.a. wavelet frames) generalize orthogonal wavelets by adding the desired properties of redundancy in their systems and flexibility in their construction (\cite{chs02,DGM,dh04,dhrs03,han97,rs97}). These extra features greatly improve their performance over orthogonal wavelets in applications such as image denoising and data processing (e.g., see \cite{djls16,ds13,hz14,sel00} and references therein).
The study of multivariate framelets/wavelets is always of interest in both theory and applications. There is a huge amount of literatures on this topic, to mention only a few here, see e.g. \cite{dhrs03,ds13,
eh08,han10,
han14,han15,hl19pp,hm03} and many references therein.
In this paper, we mainly study
compactly supported quasi-tight framelets having several desired properties. Our approach is from the theoretical point of view, to prove the existence and provide a structural characterization of quasi-tight framelets satisfying several desired properties.
To provide the necessary background and
to explain our motivations, let us first recall
some basic concepts and definitions.
Throughout the paper, by $\dm$ we always denote \emph{a dilation matrix}, which is a $d\times d$ integer matrix whose eigenvalues are greater than one in modulus, or equivalently, $\lim_{j\to \infty} \dm^{-j}=0$. Moreover, we define $d_{\dm}:=|\det(\dm)|$.
By $f\in (\dLp{2})^{r\times s}$ we mean that $f$ is an $r\times s$ matrix of square integrable functions in $\dLp{2}$.
In particular, $(\dLp{2})^r:=(\dLp{2})^{r\times 1}$.
Define the inner product by
$\la f,g\ra:=\int_{\dR}f(x)\ol{g(x)}^{\tp}dx$ for $f\in\dLrs{2}{r}{s}$ and $g\in\dLrs{2}{t}{s}$.
Let $\mrphi=(\mrphi_1,\ldots,\mrphi_r)^\tp \in (\dLp{2})^r$ and $\psi=(\psi_1,\ldots,\psi_s)^\tp \in (\dLp{2})^s$. We say that $\{\mrphi;\psi\}$ is \emph{an $\dm$-framelet} in $\dLp{2}$ if
there exist positive constants $C_1$ and $C_2$ such that
\be\label{framelet}
C_1\|f\|_{\dLp{2}}^2\le
\sum_{k\in \dZ} |\la f, \mrphi(\cdot-k)\ra|^2+\sum_{j=0}^\infty \sum_{k\in \dZ}
|\la f, \psi_{\dm^j;k}\ra|^2\le C_2\|f\|_{\dLp{2}}^2, \qquad \forall\, f\in \dLp{2},
\ee
where $|\la f, \psi_{\dm^j;k}\ra|^2:=\|\la f, \psi_{\dm^j;k}\ra\|^2_{l_2}:=\sum_{\ell=1}^r|\la f, \psi_{\ell,\dm^j;k}\ra|^2$, with
$\psi_{\ell,\dm^j;k}:=|\det(\dm)|^{j/2}\psi_\ell(\dm^j\cdot-k)$
and $\psi_{\dm^j;k}:=|\det(\dm)|^{j/2}\psi(\dm^j\cdot-k)=(\psi_{1,\dm^j;k},\ldots,\psi_{s,\dm^j;k})^\tp$. For $\eps_1,\ldots,\eps_s\in \{\pm 1\}$,
we say that $\{\mrphi; \psi\}_{(\eps_1,\ldots,\eps_s)}$ is \emph{a quasi-tight $\dm$-framelet} in $\dLp{2}$ if $\{\mrphi; \psi\}$ is an $\dm$-framelet in $\dLp{2}$ and satisfies
\be\label{qtf:expr}
f=\sum_{\ell=1}^r \sum_{k\in \dZ} \la f, \mrphi_\ell(\cdot-k)\ra \mrphi_\ell(\cdot-k)+
\sum_{j=0}^\infty \sum_{\ell=1}^s \sum_{k\in \dZ}
\eps_\ell \la f, \psi_{\ell,\dm^j;k}\ra \psi_{\ell, \dm^j;k} \qquad \forall\,f\in \dLp{2}
\ee
with the above series converging unconditionally in $\dLp{2}$.
We say that $\{\mrphi;\psi\}$ is \emph{a tight framelet} in $\dLp{2}$ if \er{framelet} holds with $C_1=C_2=1$, or equivalently, $\{\mrphi;\psi\}_{(\eps_1,\ldots,\eps_s)}$ is a quasi-tight $\dm$-framelet with $\eps_1=\cdots=\eps_s=1$.
For a quasi-tight framelet $\{\mrphi;\psi\}_{(\eps_1,\ldots,\eps_s)}$, it is often called a quasi-tight multiframelet (resp. scalar framelet) if the multiplicity of $\mrphi=(\mrphi_1,\ldots,\mrphi_r)^\tp$ is $r>1$ (resp.
$r=1$). For simplicity, we shall use the term framelet to refer both of them unless emphasized.  Moreover, by \cite{han12,han17} on the relations between nonhomogeneous and homogeneous framelets, \eqref{qtf:expr} implies
\be \label{qtf:expr:2}
f=
\sum_{j=-\infty}^\infty \sum_{\ell=1}^s \sum_{k\in \dZ}
\eps_\ell \la f, \psi_{\ell,\dm^j;k}\ra \psi_{\ell, \dm^j;k} \qquad \forall\,f\in \dLp{2}
\ee
with the above series converging unconditionally in $\dLp{2}$.
The multiscale wavelet/framelet representations in \er{qtf:expr} and \eqref{qtf:expr:2} indicate that a quasi-tight framelet behaves almost like a tight framelet.

Most known framelets are constructed from \emph{refinable vector functions} through the \emph{oblique extension principle} (OEP) (see \cite{dhrs03,chs02} and \cite{hanbook}) and such framelets are called \emph{OEP-based framelets}.
By $\dlrs{0}{r}{s}$ we denote the space of all $r\times s$ matrix-valued finitely supported sequences $u=\{u(k)\}_{k\in \dZ}:\dZ\to \C^{r\times s}$ such that $\{k\in \dZ \setsp u(k)\ne 0\}$ is a finite set.
For a vector function $\phi\in (\dLp{2})^r$, we say that $\phi$ is \emph{an $\dm$-refinable vector function} with \emph{a refinement filter/mask} $a\in \dlrs{0}{r}{r}$ if
%
\[
\phi=|\det(\dm)|\sum_{k\in\dZ}a(k)\phi(\dm \cdot-k),\quad
\mbox{or equivalently}, \quad
\wh{\phi}(\dm^{\tp} \xi)=\wh{a}(\xi)\wh{\phi}(\xi),\quad \xi\in \dR,
\]
where $\wh{a}(\xi):=\sum_{k\in \dZ} a(k) e^{-ik\cdot\xi}$ is an $r\times r$ matrix of $2\pi\dZ$-periodic $d$-variate trigonometric polynomials and
$\wh{f}(\xi):=\int_{\dR} f(x) e^{-ix\cdot\xi} dx$ for $\xi\in \dR$ is the Fourier transform of $f\in \dLp{1}$, which can be naturally extended to $\dLp{2}$ functions and tempered distributions.
The integer $r$ is the multiplicity of $\phi$ and $\wh{\phi}$ is the $r\times 1$ vector obtained by taking entry-wise Fourier transform on $\phi$. If $r=1$, then we simply say that $\phi$ is an $\dm$-refinable (scalar) function.

Let $\phi\in (\dLp{2})^r$ be a compactly supported $\dm$-refinable vector function with a refinement filter $a\in\dlrs{0}{r}{r}$.
As a special case of Theorem~\ref{thm:df} in Section~\ref{sec:ffrt}, an OEP-based compactly supported quasi-tight $\dm$-framelet $\{\mrphi;\psi\}_{(\eps_1,\ldots,\eps_s)}$ in $\dLp{2}$ with $\mrphi\in (\dLp{2})^r$ and $\psi\in (\dLp{2})^s$ is derived from $\phi$ through
\be\label{mrphi:psi}
\wh{\mrphi}(\xi):=\wh{\theta}(\xi)\wh{\phi}(\xi),\qquad
\wh{\psi}(\xi):=\wh{b}(\dm^{-\tp}\xi)\wh{\phi}(\dm^{-\tp}\xi)
\ee
for some $\theta\in\dlrs{0}{r}{r}$ and $b\in\dlrs{0}{s}{r}$ such that $\wh{\psi}(0)=0$,
\be \label{Theta}
\ol{\wh{\phi}(0)}^\tp \wh{\Theta}(0)\wh{\phi}(0)=1
\quad \mbox{with}\quad
\wh{\Theta}(\xi):=\ol{\wh{\theta}(\xi)}^{\tp}\wh{\theta}(\xi),
\ee
and $\{a;b\}_{\Theta, (\eps_1,\ldots,\eps_s)}$ is \emph{an OEP-based quasi-tight $\dm$-framelet filter bank}, i.e.,
\be\label{t:fbk}
\ol{\wh{a}(\xi)}^\tp\wh{\Theta}(\dm^{\tp}\xi)\wh{a}(\xi+2\pi \omega)+\ol{\wh{b}(\xi)}^\tp \DG(\eps_1,\ldots,\eps_s)\wh{b}(\xi+2\pi \omega)=\td(\omega)
\wh{\Theta}(\xi),\quad \forall\, \omega\in \Omega_{\dm}
\ee
for all $\xi\in \dR$, where
\be \label{delta:seq}
\td(0):=1 \quad \mbox{and}\quad
\td(x):=0,\qquad \forall\, x\ne 0
\ee
and $\Omega_\dm$ is a particular choice of the representatives of cosets in $[\dm^{-\tp}\dZ]/\dZ$ given by
\be\label{omega:dm}
\Omega_{\dm}:=\{\om{1},\dots,\om{d_{\dm}}\}:=(\dm^{-\tp}\dZ)\cap [0,1)^d\quad \mbox{with}\quad \om{1}:=0.
\ee
In particular, $\{a;b\}_{\Theta}$ is called \emph{an OEP-based  tight $\dm$-framelet filter bank} if $\{a;b\}_{\Theta, (\eps_1,\ldots,\eps_s)}$ is a quasi-tight $\dm$-framelet filter bank and $\eps_1=\dots=\eps_s=1$.

\subsection{Difficulties and motivations on multivariate framelets} \label{subsec:motivation}

We now discuss the difficulties involved in studying and constructing multivariate framelets through OEP with several desired properties.
For any finitely supported matrix-valued filter $b\in\dlrs{0}{s}{r}$, we define
\be \label{Pb}
P_{b;\dm}(\xi):=[\wh{b}(\xi+2\pi\om{1}), \ldots, \wh{b}(\xi+2\pi \om{d_{\dm}})],\qquad\xi\in\dR,
\ee
which is an $s\times (rd_{\dm})$ matrix of $2\pi\dZ$-periodic $d$-variate trigonometric polynomials. One can easily rewrite \er{t:fbk} for a quasi-tight framelet filter bank in the following equivalent matrix form:
\be \label{spectral}
\ol{P_{b;\dm}(\xi)}^\tp \DG(\eps_1,\ldots, \eps_s) P_{b;\dm}(\xi)=
\cM_{a,\Theta}(\xi),
\ee
where
%
\[
\cM_{a,\Theta}(\xi):=\DG\left(\wh{\Theta}(\xi+2\pi\om{1}),\ldots,
\wh{\Theta}(\xi+2\pi \om{d_{\dm}})\right)
-\ol{P_{a;\dm}(\xi)}^\tp \wh{\Theta}(\dm^\tp\xi) P_{a;\dm}(\xi).
\]
The decomposition in \er{spectral} is known as a \emph{generalized matrix spectral factorization} (see \cite{dhacha,dh18pp}) of $\cM_{a,\Theta}$.
For a tight $\dm$-framelet $\{\mrphi;\psi\}$,
since $\eps_1=\cdots=\eps_s=1$,
\eqref{spectral} becomes
the standard spectral factorization problem $\cM_{a,\Theta}(\xi)=\ol{P_{b;\dm}(\xi)}^\tp P_{b;\dm}(\xi)$ (see Subsection~\ref{subsec:df} for details), which requires
\be \label{nonnegative:M}
\cM_{a,\Theta}(\xi)\ge 0, \qquad \forall\, \xi\in \dR.
\ee
If \eqref{nonnegative:M} holds and $d=1$, then the Fej\'er-Riesz lemma guarantees
the existence of a matrix-valued filter
$b\in (\dlp{0})^{s\times r}$ satisfying
$\ol{P_{b;\dm}(\xi)}^\tp P_{b;\dm}(\xi)=
\cM_{a,\Theta}(\xi)$.
But this spectral factorization often fails in dimension $d\ge 2$ for
$\cM_{a,\Theta}(\xi)\ge 0$.
In sharp contrast to one-dimensional framelets (e.g., see \cite{chs02,dhrs03,han15,hanbook,hm05,mo06} and references therein), the lack of the Fej\'er-Riesz lemma for $d\ge 2$ is one of the key difficulties and obstacles for constructing multivariate tight framelets. See \cite{cpss13,cpss15} and Subsection~\ref{subsec:df} for detailed discussion on this issue.
This difficulty motivates us to consider multivariate quasi-tight framelets, which behave almost identically to tight framelets.
The first example of quasi-tight framelets was observed in \cite[Example~3.2.2]{hanbook}. Univariate quasi-tight framelets have been systematically studied in \cite{dh18pp,hl19pp} and multivariate scalar quasi-tight framelets with $\Theta=\td$ and $r=1$ have been investigated in \cite{dhacha}, where $\td\in\dlp{0}$ is the Dirac sequence defined in \eqref{delta:seq}.

The most important feature of the multiscale representations in \eqref{qtf:expr} and \eqref{qtf:expr:2} is their sparsity, which is highly desired for effectively processing multidimensional data.
By $\PL_{m-1}$ we denote the space of all $d$-variate polynomials of degree less than $m$. The sparsity of the multiscale representations in \eqref{qtf:expr} and \eqref{qtf:expr:2} comes from the vanishing moments of $\psi$.
We say that a function $\psi$ has \emph{order $m$ vanishing moments} if
%
\[
\la \pp, \psi\ra=0, \qquad \forall\, \pp\in \PL_{m-1},\quad \mbox{or equivalently},\quad
\wh{\psi}(\xi)=\bo(\|\xi\|^m),\qquad \xi\to 0,
\]
where the notation $f(\xi)=g(\xi)+\bo(\|\xi\|^m)$ as $\xi\to 0$ simply means $\partial^\mu f(0)=\partial^\mu g(0)$ for all $\mu=(\mu_1,\ldots,\mu_d)^\tp\in \dNN$ with $|\mu|:=\mu_1+\cdots+\mu_d<m$.
We define $\vmo(\psi):=m$ with $m$ being the largest such integer.
It is easy to deduce from \er{qtf:expr} that a necessary condition for all framelet generators $\psi_\ell, \ell=1,\ldots,s$ to have order $m$ vanishing moments is the following polynomial preservation property:
\be \label{qi:order}
\sum_{k\in \dZ} \la \pp, \mrphi(\cdot-k)\ra \mrphi(\cdot-k):=\sum_{\ell=1}^r \sum_{k\in \dZ} \la \pp, \mrphi_\ell(\cdot-k)\ra \mrphi_\ell(\cdot-k)=\pp,\qquad \forall\, \pp\in \PL_{m-1},
\ee
which plays a crucial role in approximation theory and numerical analysis for the convergence rate of the associated approximation/numerical scheme.
Using the Fourier transform and $\wh{\mrphi}(\xi)=\wh{\theta}(\xi)\wh{\phi}(\xi)$ in \eqref{mrphi:psi}, it is well known in the approximation theory (e.g., see \cite[Proposition~5.5.2]{hanbook})
that \eqref{qi:order} is equivalent to $\ol{\wh{\phi}(\xi)}^\tp \wh{\Theta}(\xi) \wh{\phi}(\xi)=1+\bo(\|\xi\|^m)$ as $\xi \to 0$ and
\be \label{vm:moments}
\ol{\wh{\phi}(\xi)}^\tp \wh{\Theta}(\xi) \wh{\phi}(\xi+2\pi k)=\bo(\|\xi\|^m), \qquad  \xi\to 0, \quad \forall\; k\in \dZ\bs\{0\},
\ee
where $\wh{\Theta}$ is defined in \eqref{Theta}.
Multiplying $\ol{\wh{\phi}(\xi)}^\tp$ on the left side and $\wh{\phi}(\xi)$ on the right side of \eqref{t:fbk}, we can further deduce from
\eqref{t:fbk} with $\omega=0$ and \eqref{Theta}
that $\psi$ has order $m$ vanishing moments implies
\be \label{vm:0}
1-\ol{\wh{\phi}(\xi)}^\tp \wh{\Theta}(\xi) \wh{\phi}(\xi)=\bo(\|\xi\|^{2m}),\quad \xi\to 0.
\ee
Moreover, one can conclude
that the framelet generator $\psi$
in a tight framelet $\{\mrphi;\psi\}$
has order $m$ vanishing moments if and only if \eqref{vm:moments} and \eqref{vm:0} hold.
The main goal of the OEP is to improve vanishing moments of framelets $\psi$ by properly constructing $\theta\in (\dlp{0})^{r\times r}$ such that \er{vm:moments}, \eqref{vm:0} and \eqref{spectral} are satisfied with $\wh{\Theta}$ being defined in \eqref{Theta} (see \cite{chs02,dh04,dhrs03,han09,han12,han14,hanbook,hm03} for detailed discussion).
For tight framelets, \eqref{nonnegative:M} must also hold.
For dimension one, the existence of a filter $\theta \in (\lp{0})^{r\times r}$ satisfying \eqref{nonnegative:M},
\eqref{vm:moments} and  \eqref{vm:0}
has been established in \cite{hm05} for $r=1$ and in \cite{mo06} for $r>1$.
However, the existence for a desired filter $\theta\in(\dlp{0})^{r\times r}$ remains unresolved for $d\ge 2$.

Due to the above difficulties, most papers in the literature (e.g., see \cite{cpss13,cpss15,cj00,dhacha,han14,hjsz18,js15,rs98} and references therein) study multivariate framelets only for the particular case $r=1,\Theta=\td$ and special choices of $\phi$, i.e., $\mrphi=\phi$ and $\wh{\Theta}=1$.
Indeed, many known refinable scalar functions such as spline refinable functions satisfy \eqref{vm:moments} with $\Theta=\td$ for a large positive integer $m$, which guarantees that the integer shifts of $\phi$ provide $m$ approximation order for approximating functions.
But \eqref{vm:0} with $\Theta=\td$  can only hold with $m=1$ for most known refinable (scalar) functions including all spline functions.
Hence, it is not surprising that most known multivariate tight framelets including those derived from all spline functions can have only one vanishing moment.
Using $\Theta=\td$ loses the main advantage of OEP for improving vanishing moment orders of the framelet $\psi$.

Suppose now that we could construct a desired filter $\theta\in (\dlp{0})^{r\times r}$ satisfying \eqref{vm:moments} and \eqref{vm:0} (as well as \eqref{nonnegative:M})
such that a compactly supported quasi-tight framelet $\{\mrphi; \psi\}_{(\eps_1,\ldots,\eps_1)}$ can be derived from a refinable vector function $\phi$.
The multiscale representations in \eqref{qtf:expr} and \eqref{qtf:expr:2} using the compactly supported quasi-tight framelet $\{\mrphi; \psi\}_{(\eps_1,\ldots,\eps_1)}$
appear to be perfect,
but two serious difficulties still remain if $\Theta\ne \td I_r$.
Here we only briefly address these two issues, see Section~\ref{sec:ffrt} for detailed discussion.
Using the definition of $\mrphi$ and $\psi$ in \eqref{mrphi:psi}, we observe from $\wh{\phi}(\dm^\tp \xi)=\wh{a}(\xi)\wh{\phi}(\xi)$ that
\be \label{mphi:mpsi}		 \wh{\mrphi}(\dm^{\tp}\xi)=\wh{\mra}(\xi)\wh{\mrphi}(\xi),\qquad \wh{\psi}(\dm^{\tp}\xi)=\wh{\mrb}(\xi)\wh{\mrphi}(\xi),\qquad\xi\in\R^d,
\ee
where
\be \label{mab}		 \wh{\mra}(\xi):=\wh{\theta}(\dm^{\tp} \xi) \wh{a}(\xi) \wh{\theta}(\xi)^{-1}
\qquad \mbox{and}\qquad		 \wh{\mrb}(\xi):=\wh{b}(\xi) \wh{\theta}(\xi)^{-1}.
\ee
As we shall discuss in Section~\ref{sec:ffrt},
the underlying discrete multiframelet transform employing
a quasi-tight framelet filter bank $\{a;b\}_{\Theta, (\eps_1,\ldots,\eps_1)}$ actually employs the filters $\mra$ and $\mrb$, which are often not finitely supported (i.e., $\wh{\mra}$ and $\wh{\mrb}$ are not matrices of $2\pi\dZ$-periodic trigonometric polynomials) if $\Theta\ne \td I_r$. Thus, deconvolution is unavoidable and this greatly hinders the efficiency of their associated framelet transform.
There is an additional difficulty for multiframelets with $r>1$:
the vanishing moments of $\psi$ do not necessarily transfer into sparsity of its associated discrete multiframelet transform. This issue is known as the balancing property in the literature for multiwavelets and multiframelets (\cite{cj00,han09,han10,hanbook,lv98,sel00}), where most known constructed multiframelets often have a much lower balancing order than its order of vanishing moments. The balancing property of a multiframelet is essential for the sparsity of the associated multiframelet transform, see Section~\ref{sec:ffrt} for details.

\subsection{Main results and contributions}

In this paper, we shall resolve all the above difficulties and issues in Subsection~\ref{subsec:motivation}
by taking the approach of OEP and quasi-tight framelets.
The equation \eqref{vm:moments} for the approximation property of a refinable vector function $\phi$ with a refinement filter $a$ is intrinsically linked to the sum rules of the refinement filter $a$ (e.g., see \cite[Proposition~5.5.2 and Theorem~5.5.4]{hanbook} and \cite{han03,jj02}). We say that a filter $a\in (\dlp{0})^{r\times r}$ has \emph{order $m$ sum rules with respect to $\dm$} with a matching filter $\vgu\in \dlrs{0}{1}{r}$ if $\wh{\vgu}(0)\ne 0$ and
\be \label{sr}
\wh{\vgu}(\dm^{\tp}\xi)\wh{a}(\xi+2\pi\omega)=\td(\omega)\wh{\vgu}(\xi)+
\bo(\|\xi\|^m),\quad \xi\to 0,\quad\forall\, \omega\in\Omega_{\dm}.
\ee
In particular, we define $\sr(a,\dm):=m$ with $m$ being the largest possible integer in \eqref{sr}. For a quasi-tight $\dm$-framelet $\{\mrphi;\psi\}_{(\eps_1,\ldots,\eps_s)}$, in fact, $\vmo(\psi)\le \sr(a,\dm)$ no matter how we choose the filter $\theta$.

We say that a fast transform is \emph{compact} if it can be implemented by convolution using only finitely supported filters.
For a filter $\theta\in (\dlp{0})^{r\times r}$, we say that $\wh{\theta}$ (or simply $\theta$) is \emph{strongly invertible} if $\wh{\theta}^{-1}$ is also a matrix of $2\pi\dZ$-periodic trigonometric polynomials.
Our main result is as follows:

\begin{theorem}\label{thm:qtf}
	Let $\dm$ be a $d\times d$ dilation matrix and $\phi\in (\dLp{2})^r$ be a
compactly supported $\dm$-refinable vector function satisfying $\wh{\phi}(\dm^{\tp} \xi)=\wh{a}(\xi)\wh{\phi}(\xi)$ with $\wh{\phi}(0)\ne 0$ and
a matrix-valued filter $a\in \dlrs{0}{r}{r}$.
	Suppose that the filter $a$ has order $m$ sum rules with respect to $\dm$ satisfying \eqref{sr} with a matching filter $\vgu\in \dlrs{0}{1}{r}$ such that $\wh{\vgu}(0)\wh{\phi}(0)=1$.
Let $\dn$ be a $d\times d$ integer matrix with $|\det(\dn)|=r$.
If $r\ge 2$, then there exist filters $b\in \dlrs{0}{s}{r}$, $\theta\in \dlrs{0}{r}{r}$ and $\eps_1,\ldots,\eps_s\in \{\pm 1\}$ such that
	\begin{enumerate}
		\item[(1)] $\{\mrphi; \psi\}_{(\eps_1,\ldots,\eps_s)}$ is a compactly supported quasi-tight $\dm$-framelet in $\dLp{2}$ such that $\psi$ has order $m$ vanishing moments,
where $\mrphi$ and $\psi$ are defined in \eqref{mrphi:psi}.
Moreover,
$\mrphi$ and $\psi$ satisfy the refinable structure in
\eqref{mphi:mpsi} with the filters $\mra,\mrb$ being defined in \eqref{mab}.

		\item[(2)] $\wh{\theta}$ is \emph{strongly invertible}, i.e., $\wh{\theta}^{-1}$ is also a matrix of $2\pi\dZ$-periodic trigonometric polynomials.
		
		\item[(3)]
$\{a;b\}_{\Theta, (\eps_1,\ldots,\eps_s)}$ and
$\{\mra; \mrb\}_{\td I_r, (\eps_1,\ldots,\eps_s)}$ are finitely supported quasi-tight $\dm$-framelet filter banks.
		
		\item[(4)] 	The associated discrete multiframelet transform employing $\{\mra; \mrb\}_{\td I_r, (\eps_1,\ldots,\eps_s)}$ is compact and order $m$ $E_{\dn}$-balanced,
where $E_{\dn}$ is the vector conversion operator in \er{vec:con} (see Section~\ref{sec:ffrt} for details).
\end{enumerate}
	\end{theorem}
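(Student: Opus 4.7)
The plan is to construct the filters $\theta$, $b$, and the signs $\eps_\ell$ in sequence, combining two ingredients: the normal-form reduction for matrix-valued filters developed earlier in the paper and the generalized spectral factorization of Hermitian trigonometric polynomial matrices. I first apply the normal form to $a$ with matching filter $\vgu$: there exists a strongly invertible $U\in\dlrs{0}{r}{r}$ such that after the similarity $\wh{\tilde a}(\xi):=\wh U(\dm^\tp\xi)\wh a(\xi)\wh U(\xi)^{-1}$, the transformed matching filter satisfies $\wh{\tilde\vgu}(\xi)=(1,0,\ldots,0)+\bo(\|\xi\|^m)$ and $\wh{\tilde a}(\xi)$ exhibits a canonical block structure at $\xi=0$. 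In these coordinates all of the $m$-th order approximation power of $\tilde\phi$ is concentrated in its leading scalar component, which simplifies every subsequent construction and is exactly what later yields the order-$m$ $E_\dn$-balancing.

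Next, working in the normalized coordinates, I would construct a strongly invertible $\tilde\theta$ so that $\wh{\tilde\Theta}(\xi):=\ol{\wh{\tilde\theta}(\xi)}^\tp\wh{\tilde\theta}(\xi)$ satisfies \eqref{vm:moments} and \eqref{vm:0} up to order $2m$. Because $\wh{\tilde\phi}(0)=(1,0,\ldots,0)^\tp$, condition \eqref{vm:0} reduces to a scalar Taylor-matching problem on the $(1,1)$-entry of $\wh{\tilde\Theta}$, while \eqref{vm:moments} reduces to choosing first-column entries that annihilate the cross terms; strong invertibility is preserved by assembling $\tilde\theta$ as a finite product of elementary strongly invertible factors (triangular unipotent trigonometric polynomial matrices plus a diagonal scalar piece). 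Conjugating back by $U$ yields a strongly invertible $\theta\in\dlrs{0}{r}{r}$, establishing (2). The hypothesis $r\ge 2$ is essential here: the auxiliary rows of $\theta$ absorb obstructions that, in the scalar case $r=1$, would force $\Theta=\td$ and defeat the balancing claim.

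With $\theta$ fixed, I would then factor the Hermitian trigonometric polynomial matrix $\cM_{a,\Theta}$ through the generalized spectral factorization, producing a finitely supported $b\in\dlrs{0}{s}{r}$ and signs $\eps_\ell\in\{\pm 1\}$ with $\ol{P_{b;\dm}(\xi)}^\tp\DG(\eps_1,\ldots,\eps_s)P_{b;\dm}(\xi)=\cM_{a,\Theta}(\xi)$. Because we allow negative signs, the multivariate Fej\'er--Riesz obstruction to standard spectral factorization disappears: any Hermitian matrix of trigonometric polynomials admits a $P^*\DG(\pm 1,\ldots,\pm 1)P$-type factorization by entry-wise diagonalization together with elementary row reductions over $\C[e^{\pm i\xi_1},\ldots,e^{\pm i\xi_d}]$. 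The Taylor identities built into $\Theta$ in the previous step force the first block column of $\cM_{a,\Theta}$ to vanish to order $m$ at $\xi=0$, which via \eqref{mrphi:psi} translates to $\vmo(\psi)\ge m$.

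The remaining claims follow by assembly. Item (1) is the quasi-tight framelet property for $\{\mrphi;\psi\}_{(\eps_1,\ldots,\eps_s)}$, which reduces by Theorem~\ref{thm:df} to the filter-bank identity \eqref{t:fbk} just verified, together with the vanishing-moment lower bound. For (3), strong invertibility of $\theta$ makes $\wh{\mra}$ and $\wh{\mrb}$ from \eqref{mab} finitely supported, and substituting $\Theta=\td I_r$ into \eqref{t:fbk} shows that $\{\mra;\mrb\}_{\td I_r,(\eps_1,\ldots,\eps_s)}$ is again a finitely supported quasi-tight framelet filter bank, which also gives compactness of the associated discrete transform in (4). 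Finally, the order-$m$ $E_\dn$-balancing in (4) reduces, via the similarity by $U$ from the first step, to the sum-rule statement for $\mra$ with matching filter $(1,0,\ldots,0)$, which is precisely what the normal form delivers. I expect the main obstacle to be the second step: simultaneously securing the $2m$-th order Taylor expansion of $\wh\Theta$, the identity \eqref{vm:moments}, and strong invertibility of $\theta$ is a tightly coupled nonlinear problem on matrix-valued trigonometric polynomials, and it is precisely the normal-form reduction that decouples it into a scalar moment problem plus algebraic corrections in the remaining components.
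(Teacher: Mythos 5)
Your overall architecture (normal form reduction followed by a generalized spectral factorization with signs $\eps_\ell=\pm1$) is the same as the paper's, but there is a genuine gap in how you handle item (4), the $E_{\dn}$-balancing. You normalize the matching filter to $(1,0,\ldots,0)$ and then claim that the order-$m$ $E_{\dn}$-balancing ``reduces, via the similarity by $U$, to the sum-rule statement for $\mra$ with matching filter $(1,0,\ldots,0)$.'' This is not correct: the balancing property is defined relative to the \emph{fixed} vector conversion operator $E_{\dn}$, whose associated symbol is $\wh{\Vgu_{\dn}}(\xi)=(e^{i\dn^{-1}\ka{1}\cdot\xi},\ldots,e^{i\dn^{-1}\ka{r}\cdot\xi})$ with $\wh{\Vgu_{\dn}}(0)=(1,\ldots,1)$. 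By Theorem~\ref{sd:tz:pl}(1), $E_{\dn}(\PL_{m-1})=\PL_{m-1,y}$ forces $\wh{y}(\xi)=\wh{c}(\xi)\wh{\Vgu_{\dn}}(\xi)+\bo(\|\xi\|^{m})$, and $(1,0,\ldots,0)$ is not of this form for any $m\ge 1$. Vanishing moments are invariant under the similarity $\wh{U}(\dm^\tp\xi)\wh{a}(\xi)\wh{U}(\xi)^{-1}$, but balancing is not --- this non-invariance is exactly the phenomenon the paper is built to overcome. The correct move (and what the paper's proof does) is to use the \emph{general} normal form, Theorem~\ref{thm:normalform:gen}, to choose $\theta$ so that $\wh{\vgu}\wh{\theta}^{-1}=\tfrac{1}{\sqrt{r}}\wh{\Vgu_{\dn}}+\bo(\|\xi\|^{m})$ and $\wh{\theta}\wh{\phi}=\tfrac{1}{\sqrt{r}}\ol{\wh{\Vgu_{\dn}}}^{\tp}+\bo(\|\xi\|^{n})$ with $n\ge 2m$; the standard $(1,0,\ldots,0)$-form is then used only \emph{inside} the factorization step (Theorem~\ref{thm:qtf:nf}), after which one conjugates back by $\wh{\mrb}=\wh{b}\wh{U}$, and item (4) is verified directly from $\wh{\Vgu_{\dn}}(\xi)\ol{\wh{\mra}(\xi)}^{\tp}=\sqrt{r}\,\ol{\wh{\mrphi}(\dm^{\tp}\xi)}^{\tp}+\bo(\|\xi\|^{m})$ via Theorem~\ref{thm:bp}.

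A secondary but real concern is the factorization step. The assertion that any Hermitian matrix of trigonometric polynomials admits a $\ol{P}^{\tp}\DG(\pm1,\ldots,\pm1)P$ factorization ``by entry-wise diagonalization together with elementary row reductions over $\C[e^{\pm i\xi_1},\ldots,e^{\pm i\xi_d}]$'' does not go through as stated: pivoting requires division by trigonometric polynomials, which leaves the Laurent polynomial ring, and in any case one must produce factors $b$ whose columns retain order-$m$ (balanced) vanishing moments. The paper's factorization is considerably more structured: it uses the normal form together with Lemma~\ref{diff:vm} to write $\cM_{a,\UU}$ as a sum over difference-operator factors $\Delta_\alpha$, absorbs the off-diagonal Hermitian blocks by a completing-the-square device, and splits each remaining Hermitian diagonal block as a difference of two Hermitian squares --- which is precisely where the negative signs $\eps_\ell=-1$ enter. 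Without an argument at this level of detail, both the existence of $b$ and the vanishing-moment/balancing claims for $\psi$ and $\mrb$ remain unproven.
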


For $r=1$, a similar result is given in Corollary~\ref{cor:qtf:r:1} satisfying only item (1); but we have to unavoidably give up all the desired properties in items (2)--(4).
Our proof of Theorem~\ref{thm:qtf} is built on the following result on the normal form of a matrix-valued filter, which is of interest and importance in itself for greatly facilitating the study of refinable vector functions, multiwavelets and multiframelets.

\begin{theorem}\label{thm:normalform}
Let $\dm$ be a $d\times d$ dilation matrix and $\phi$ be a vector of
compactly supported distributions satisfying $\wh{\phi}(\dm^{\tp} \xi)=\wh{a}(\xi)\wh{\phi}(\xi)$ with $\wh{\phi}(0)\ne 0$ and
a finitely supported matrix-valued filter $a\in \dlrs{0}{r}{r}$.
Suppose that the filter $a$ has order $m$ sum rules with respect to $\dm$ satisfying \eqref{sr} with a matching filter $\vgu\in \dlrs{0}{1}{r}$ such that $\wh{\vgu}(0)\wh{\phi}(0)=1$.
If $r\ge 2$, then for any positive integer $n\in \N$, there exists a strongly invertible $r\times r$ matrix $\wh{U}$ of $2\pi\dZ$-periodic trigonometric polynomials such that the following properties hold:
	
\begin{enumerate}
\item[(1)] Define
$\wh{\mathring{\vgu}}(\xi):=(\wh{\mathring{\vgu}_1}(\xi),\ldots,	 \wh{\mathring{\vgu}_r}(\xi)):=\wh{\vgu}(\xi)\wh{U}(\xi)^{-1}$ and
$\wh{\mathring{\phi}}(\xi):=(\wh{\mathring{\phi}_1}(\xi),\ldots,		 \wh{\mathring{\phi}_r}(\xi))^\tp:=\wh{U}(\xi)\wh{\phi}(\xi)$.
Then
\be \label{normalform:phi}		 \wh{\mathring{\phi}_1}(\xi)=1+\bo(\|\xi\|^n)
		\quad \mbox{and}\quad \wh{\mathring{\phi}_\ell}(\xi)=\bo(\|\xi\|^n),\quad \xi\to 0,\quad \ell=2,\ldots,r,
\ee		 \be\label{normalform:vgu}\wh{\mathring{\vgu}_1}(\xi)=1+\bo(\|\xi\|^m)
		\quad \mbox{and}\quad
		 \wh{\mathring{\vgu}_\ell}(\xi)=\bo(\|\xi\|^m),\quad \xi\to 0, \quad\ell=2,\ldots,r.\ee	
		
		\item[(2)] Define a finitely supported matrix-valued filter $\mra\in\dlrs{0}{r}{r}$ by $\wh{\mathring{a}}(\xi):=\wh{U}(\dm^{\tp}\xi) \wh{a}(\xi)\wh{U}(\xi)^{-1}$. Then the filter $\mra$ takes the \emph{standard $(m,n)$-normal form}, i.e.,
\be \label{normalform}
		\wh{\mra}(\xi)=\left[
		 \begin{matrix}\wh{\mra_{1,1}}(\xi) & \wh{\mra_{1,2}}(\xi)\\
			\wh{\mra_{2,1}}(\xi) & \wh{\mra_{2,2}}(\xi)\end{matrix}\right],
		\ee
		where $\wh{\mra_{1,1}},\wh{\mra_{1,2}}, \wh{\mra_{2,1}}$ and $\wh{\mra_{2,2}}$ are $1\times 1$, $1\times (r-1)$, $(r-1)\times 1$ and $(r-1)\times (r-1)$ matrices of $2\pi\dZ$-periodic trigonometric polynomials such that
\begin{align}	 &\wh{\mra_{1,1}}(\xi)=1+\bo(\|\xi\|^n),\quad \wh{\mra_{1,1}}(\xi+2\pi\omega)=\bo(\|\xi\|^m),\quad \xi\to 0,\quad\forall \omega\in\Omega_{\dm}\setminus\{0\},\label{mra:11}\\
&\wh{\mra_{1,2}}(\xi+2\pi\omega)=\bo(\|\xi\|^m),\quad \xi\to 0,\quad\forall \omega\in\Omega_{\dm},\label{mra:12}\\ &\wh{\mra_{2,1}}(\xi)=\bo(\|\xi\|^n),\quad \xi\to 0.\label{mra:21}
\end{align}
Moreover, $\wh{\mrphi}(\dm^{\tp}\xi)=\wh{\mra}(\xi)\wh{\mrphi}(\xi)$ and the new filter $\mra$ has order $m$ sum rules with respect to $\dm$ with the matching filter $\mathring{\vgu}\in (\dlp{0})^{1\times r}$ in item (1).
		
\item[(3)] Define $\|\wh{\phi}(\xi)\|^2:=\|\wh{\phi}(\xi)\|^2_{l_2}:=
    \ol{\wh{\phi}(\xi)}^\tp \wh{\phi}(\xi)$.
If in addition
\be\label{moment:special}
\wh{\vgu}(\xi)=
\|\wh{\phi}(\xi)\|^{-2}
\ol{\wh{\phi}(\xi)}^{\tp}
+\bo(\|\xi\|^m),\quad\xi\to 0,
\ee
then the strongly invertible $\wh{U}$ can satisfy the following ``almost orthogonality" condition:
		 \be\label{eq:ortho}\ol{\wh{U}(\xi)}^{-\tp}\wh{U}(\xi)^{-1}=\DG\left(\|\wh{\phi}(\xi)\|^2,\|\wh{u_2}(\xi)\|^2,\dots, \|\wh{u_r}(\xi)\|^2\right)+\bo(\|\xi\|^{\tilde{n}}),\quad\xi\to 0,\ee
where $\wh{u_j}$ is the $j$-th column of the matrix $\wh{U}^{-1}$ for $j=2,\dots,r$ and $\tilde{n}:=\max(m,n)$.
\end{enumerate}
Conversely, if there exists a strongly invertible matrix $\wh{U}$ of $2\pi\dZ$-periodic trigonometric polynomials such that items (1) and (2) and \er{eq:ortho} hold with $n\ge m$, then \er{moment:special} must hold.
\end{theorem}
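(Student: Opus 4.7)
My plan is to construct the strongly invertible $\wh{U}$ inductively on Taylor order at $\xi = 0$, using the identity
\[
\wh{\vgu}(\xi)\wh{\phi}(\xi) = 1 + \bo(\|\xi\|^m), \qquad \xi\to 0,
\]
obtained by combining the sum rule $\wh{\vgu}(\dm^\tp\xi)\wh{a}(\xi) = \wh{\vgu}(\xi) + \bo(\|\xi\|^m)$ with $\wh{a}(\xi)\wh{\phi}(\xi) = \wh{\phi}(\dm^\tp\xi)$ to derive $h(\dm^\tp\xi) - h(\xi) = \bo(\|\xi\|^m)$ for $h(\xi):=\wh{\vgu}(\xi)\wh{\phi}(\xi)$, then iterating $\xi \mapsto \dm^{-\tp}\xi$ with $h(0) = 1$ and geometric summability of $\sum_j\|\dm^{-j\tp}\xi\|^m$. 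This identity keeps the normal-form constraints on $\wh{\mrphi}$ and $\wh{\mathring{\vgu}}$ consistent at each Taylor order $k < m$ and forces $\wh{\mathring{\vgu}_1}(\xi)\wh{\mrphi_1}(\xi) = 1 + \bo(\|\xi\|^m)$ once the other conditions of item~(1) are achieved.

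The construction starts with Phase~1: pick a constant $U_0 \in \dgln{r}{\C}$ whose first row is $\wh{\vgu}(0)$, completed by $r-1$ further rows annihilating $\wh{\phi}(0)$ (possible since $\wh{\vgu}(0)\wh{\phi}(0) = 1$) so that $U_0\wh{\phi}(0) = e_1$ and $\wh{\vgu}(0)U_0^{-1} = e_1^\tp$. Phase~2 is an order-by-order correction: for $k = 1, 2, \ldots, \tilde n - 1$ with $\tilde n := \max(m,n)$, left-multiply by a strongly invertible trigonometric polynomial factor $\wh{V}_k(\xi) = I + W_k(\xi)$, $W_k(\xi) = \bo(\|\xi\|^k)$, choosing the order-$k$ Taylor coefficient $W_k^{(k)}$ so that (i) its first column kills the order-$k$ Taylor coefficient of the running $\wh{U}_{k-1}\wh{\phi}$ (a linear equation of the form $W_k^{(k)}e_1 = (\text{known})$, active for $k < n$), (ii) entries $2,\dots,r$ of its first row kill the corresponding entries of the order-$k$ coefficient of $\wh{\vgu}\wh{U}_{k-1}^{-1}$ (active for $k < m$; consistency with (i) at the $(1,1)$-entry when $k < m$ is precisely the identity above), and (iii) the remaining $(r-1)\times(r-1)$ lower-right block is used to adjust $\tr W_k^{(k)}$ to match the leading Taylor behaviour of a monomial. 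Higher-order terms of $W_k$ are then chosen as trigonometric-polynomial realisations (via the standard substitution $\xi_j \mapsto i(1-e^{-i\xi_j}) = \xi_j + \bo(|\xi_j|^2)$ iteratively) together with compensating off-diagonal corrections (modelled on the identity $(1+i-ie^{-i\xi})(1-i+ie^{-i\xi}) - (1-e^{-i\xi})^2 = 1$ for $r=d=2$) forcing $\det \wh{V}_k$ to be \emph{exactly} a monomial. Item~(1) then holds for the product $\wh{U} = \wh{V}_{\tilde n-1}\cdots\wh{V}_1 U_0$: the conditions on $\wh{\mrphi_\ell}$, $\wh{\mathring{\vgu}_\ell}$ for $\ell \ge 2$ are met by construction, and those for $\ell = 1$ follow from $\wh{\mathring{\vgu}}\wh{\mrphi} = \wh{\vgu}\wh{\phi} = 1 + \bo(\|\xi\|^m)$ together with $\wh{\mrphi_1}(0) = \wh{\mathring{\vgu}_1}(0) = 1$.

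Item~(2) is a direct block-matrix calculation. From $\wh{\mrphi}(\dm^\tp\xi) = \wh{\mra}(\xi)\wh{\mrphi}(\xi)$, expanding by blocks with $\wh{\mrphi_\ell} = \bo(\|\xi\|^n)$ for $\ell \ge 2$ and $\wh{\mrphi_1} = 1 + \bo(\|\xi\|^n)$ yields $\wh{\mra_{1,1}}(\xi) = 1 + \bo(\|\xi\|^n)$ and $\wh{\mra_{2,1}}(\xi) = \bo(\|\xi\|^n)$. The new sum rule $\wh{\mathring{\vgu}}(\dm^\tp\xi)\wh{\mra}(\xi+2\pi\omega) = \td(\omega)\wh{\mathring{\vgu}}(\xi) + \bo(\|\xi\|^m)$, inherited from $a$ via $\wh{\mra} = \wh{U}(\dm^\tp\cdot)\wh{a}\wh{U}^{-1}$ and $2\pi\dZ$-periodicity ($\dm^\tp\omega \in \dZ$ for $\omega \in \Omega_\dm$), together with $\wh{\mathring{\vgu}_\ell} = \bo(\|\xi\|^m)$ for $\ell \ge 2$, supplies \eqref{mra:11} at $\omega \neq 0$ (extracting the first-column block) and \eqref{mra:12} at all $\omega \in \Omega_\dm$ (extracting the columns $2,\ldots,r$ block).

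For item~(3), assumption \eqref{moment:special} together with $\wh{\vgu}\wh{u_j} = \wh{\mathring{\vgu}_j} = \bo(\|\xi\|^m)$ gives $\ol{\wh{\phi}}^\tp\wh{u_j} = \bo(\|\xi\|^m)$ for $j \ge 2$; combined with $\wh{u_1} = \wh{\phi} + \bo(\|\xi\|^{\tilde n})$ (obtained by running Phase~2 up to order $\tilde n$), this gives $\ol{\wh{u_1}}^\tp\wh{u_j} = \bo(\|\xi\|^m)$. When $n > m$, a final Gram--Schmidt replacement $\wh{u_j} \mapsto \wh{u_j} - c_j(\xi)\wh{u_1}$ with $c_j$ a trigonometric polynomial matching $\ol{\wh{u_1}}^\tp\wh{u_j}/\|\wh{u_1}\|^2$ to order $\tilde n - 1$ (necessarily $c_j = \bo(\|\xi\|^m)$, which preserves the $\wh{\mathring{\vgu}}$ normal form) upgrades the off-diagonal $(1,j)$-entries to $\bo(\|\xi\|^{\tilde n})$; standard Gram--Schmidt among $\wh{u_2},\ldots,\wh{u_r}$ handles the remaining off-diagonals, and $\|\wh{u_1}\|^2 = \|\wh{\phi}\|^2 + \bo(\|\xi\|^{\tilde n})$ follows from $\wh{u_1} = \wh{\phi} + \bo(\|\xi\|^{\tilde n})$. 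For the converse with $n \ge m$, multiplying $\ol{\wh{U}^{-1}}^\tp\wh{U}^{-1} = \DG(\|\wh{u_1}\|^2,\ldots,\|\wh{u_r}\|^2) + \bo(\|\xi\|^{\tilde n})$ by $\wh{U}$ on the right and reading off the first row gives $\wh{U}_{1,\cdot}(\xi) = \|\wh{u_1}\|^{-2}\ol{\wh{u_1}}^\tp + \bo(\|\xi\|^{\tilde n}) = \|\wh{\phi}\|^{-2}\ol{\wh{\phi}}^\tp + \bo(\|\xi\|^n)$; combining with $\wh{\vgu} = \wh{\mathring{\vgu}}\wh{U} = (e_1^\tp + \bo(\|\xi\|^m))\wh{U}$ yields \eqref{moment:special}. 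The principal technical obstacle throughout is enforcing that $\det \wh{V}_k$ is genuinely a monomial at each inductive step, which requires careful accounting of the available degrees of freedom in the lower-right block of $W_k^{(k)}$ together with explicit compensating higher-order off-diagonal corrections.
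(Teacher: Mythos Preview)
Your derivation of the key identity $\wh{\vgu}(\xi)\wh{\phi}(\xi)=1+\bo(\|\xi\|^m)$, the block computations for item~(2), the Gram--Schmidt argument for item~(3), and the converse are all correct and essentially match the paper's reasoning. The substantive difference is in how you build the strongly invertible $\wh{U}$ for item~(1).

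The paper does \emph{not} construct $\wh{U}$ order-by-order. Instead it proves a more general result (Theorem~\ref{thm:normalform:gen}) by composing three strongly invertible factors: (a) a matrix $\wh{U_2}$ sending $\wh{\vgu}$ to $(1,0,\dots,0)+\bo(\|\xi\|^{\tilde n})$, obtained from Lemma~\ref{lem:U} (a known result from \cite{han10}); (b) a matrix $\wh{U_1}$ handling the target side, again from Lemma~\ref{lem:U}; and (c) a \emph{unipotent lower-triangular} matrix $\wh{U_3}$ with first column $(1,\wh{w_2},\dots,\wh{w_r})^\tp$ that adjusts $\wh{\phi}$ to match the target to order~$\tilde n$. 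The point is that $\det\wh{U_3}\equiv 1$ automatically, so strong invertibility is free at this step, and the hard work is isolated in the cited Lemma~\ref{lem:U}. Your order-by-order scheme instead imposes simultaneous constraints on the first row and first column of each $W_k^{(k)}$ and then has to \emph{force} $\det\wh{V_k}$ to be an exact monomial via ad hoc ``compensating off-diagonal corrections''; you yourself flag this as the principal obstacle, and the $2\times 2$ identity you quote is not a general method. This step is genuinely incomplete as written---you are effectively reproving Lemma~\ref{lem:U} under extra constraints without giving the argument.

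There is also a small gap in your derivation of $\wh{\mathring{\vgu}_1}=1+\bo(\|\xi\|^m)$ from $\wh{\mathring{\vgu}}\,\wh{\mrphi}=1+\bo(\|\xi\|^m)$: when $n<m$, the relation $(\wh{\mathring{\vgu}})_1(\wh{\mrphi})_1=1+\bo(\|\xi\|^m)$ combined with $(\wh{\mrphi})_1=1+\bo(\|\xi\|^n)$ only yields $(\wh{\mathring{\vgu}})_1=1+\bo(\|\xi\|^n)$. The fix (which the paper makes explicitly) is to first treat the case $n\ge m$, i.e.\ enforce your condition~(i) for all $k<\tilde n$ rather than only $k<n$; the statement for smaller $n$ then follows a fortiori.
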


Here are some remarks and contributions of our main results in Theorems~\ref{thm:qtf} and~\ref{thm:normalform}.
\begin{enumerate}
	
\item To prove the main result Theorem~\ref{thm:qtf}, the problem essentially boils down to searching for a suitable filter $\theta\in\dlrs{0}{r}{r}$ and obtaining a generalized spectral factorization of $\cM_{a,\Theta}$ as in \er{spectral} such that all desired properties in Theorem~\ref{thm:qtf} are satisfied. The proof of the result greatly demonstrates the flexibility of the generalized matrix spectral factorization over the regular matrix spectral factorization.
	
\item The one-dimensional version of Theorem~\ref{thm:qtf} was established in \cite{hl19pp}. However, due to the aforementioned difficulties on multivariate multiframelets,
    Theorem \ref{thm:qtf} is not a simple generalization of \cite{hl19pp}. Several new challenges and difficulties are involved in the study of multivariate multiframelets. For example, the factorization technique in the case $d=1$ for solving \eqref{spectral}
    no longer works when $d\ge 2$. It is well known that for $d=1$, a $2\pi$-periodic trigonometric polynomial has order $m$ vanishing moments if and only if it is divisible by $(1-e^{-i\xi})^m$. This fact is the key in the study of univariate framelets with high vanishing moments in \cite{han15,hanbook,hl19pp}. However, there is no such corresponding factor playing the role of $(1-e^{-i\xi})^m$ when $d\ge 2$.

	\item The key ingredient to prove Theorem~\ref{thm:qtf} is a newly developed normal form of a multivariate matrix-valued filter in Theorem~\ref{thm:normalform}, which greatly facilitates the study of refinable vector functions and multiwavelets/multiframelets. Our result on the filter normal form significantly improves \cite[Proposition~2.4]{han03} and \cite[Theorem~5.1]{han10}, and plays a key role in the study of multivariate quasi-tight multiframelets with high vanishing moments and the balancing property.
		
		\item The balancing property is more complicated in high dimensions than in dimension one. To be brief, the balancing property describes how efficient a discrete multiframelet transform is when handling vectorized input data. Here by vectorized input data we mean that the input data is obtained by converting a scalar data into a vector using a standard vector conversion process. When the multiplicity is given, there is only one standard way to convert a scalar data into a vector data when $d=1$. However, there are multiple ways to perform the vectorization with a given multiplicity when $d\ge 2$. We will discuss the balancing property of a discrete multiframelet transform in detail in Section~\ref{sec:ffrt}.
		
		\item Theorem~\ref{thm:qtf} demonstrates great advantages of quasi-tight framelets. A multivariate quasi-tight framelet with the highest possible vanishing moments can be always obtained from any arbitrarily given compactly supported refinable vector function.
This is not the case for constructing multivariate tight framelets. To our best knowledge, all existing constructions of multivariate nonseparable tight framelets are designed for special low-pass filters and
often have low vanishing moments
(see e.g. \cite{Ehler07,eh08,
hjsz18,hz14,kps16,lj06,rs98,sz16}).
\end{enumerate}

\subsection{Paper structure}

The structure of the paper is as follows.
In Section~\ref{sec:ffrt},
we shall provide further explanations on the difficulty on multivariate framelets with high vanishing moments and their associated discrete multiframelet transforms.
We first briefly review dual framelets in Section~\ref{sec:ffrt}. Next,
we shall investigate discrete multiframelet transforms employing multivariate dual framelet filter banks. Then we will discuss the perfect reconstruction and the balancing property of discrete multiframelet transforms at the end of Section~\ref{sec:ffrt}.
In Section~\ref{sec:normalform}, we shall prove Theorem~\ref{thm:normalform} on a normal form of a matrix-valued filter, with some demonstrations and explanation of the importance of the normal form theory in the study of multivariate refinable vector functions and multiwavelets/multiframelets. In Section~\ref{sec:qtf}, we shall prove Theorem~\ref{thm:qtf} on the existence of quasi-tight multiframelets satisfying all desired properties. We will further study the structure of balanced multivariate quasi-tight multiframelets derived through OEP, and provide an algorithm for construction.

\section{Discrete Multiframelet Transforms and Balancing Property}
\label{sec:ffrt}

In this section, to further explain our motivations, we shall investigate various properties such as compactness and balancing property of a discrete multiframelet transform employing an OEP-based dual framelet filter bank.

\subsection{Dual framelet filter banks}
\label{subsec:df}
Since we shall study a discrete multiframelet transform employing dual framelets,
let us briefly recall the definition of multivariate dual framelets, which help us better understand our motivations and provide some preliminaries for the proofs of our main results.

Let $\mrphi,\tilde{\mrphi}\in (\dLp{2})^r$ and $\psi,\tilde{\psi}\in (\dLp{2})^s$.
We say that $( \{\mrphi; \psi\},\{\tilde{\mrphi}; \tilde{\psi}\})$ is \emph{a dual $\dm$-framelet} in $\dLp{2}$ if both $\{\mrphi; \psi\}$ and $\{\tilde{\mrphi}; \tilde{\psi}\}$ are $\dm$-framelets in $\dLp{2}$ (i.e., they satisfy \eqref{framelet}) and
\be\label{framelet:expr}
\la f,g\ra=\sum_{k\in \dZ} \la f,\mrphi(\cdot-k) \ra \la \tilde{\mrphi}(\cdot-k),g\ra+\sum_{j=0}^\infty \sum_{k\in \dZ}
\la f, \psi_{\dm^j;k}\ra \la \tilde{\psi}_{\dm^j;k},g\ra,\quad \forall f,g\in \dLp{2},
\ee
with the above series converging absolutely. As a direct consequence of the identity in \eqref{framelet:expr} (e.g., see \cite[Proposition~5]{han12} and \cite[Theorem~4.3.5]{hanbook}),  we have the following wavelet/framelet representations: for $f\in \dLp{2}$,
\[
f=\sum_{k\in \dZ} \la f,\mrphi(\cdot-k) \ra \tilde{\mrphi}(\cdot-k)+\sum_{j=0}^\infty \sum_{k\in \dZ}
\la f,\psi_{\dm^j;k}\ra  \tilde{\psi}_{\dm^j;k}
=
\sum_{j\in \Z} \sum_{k\in \dZ}
\la f, \psi_{\dm^j;k}\ra \tilde{\psi}_{\dm^j;k}
\]
with the above series converging unconditionally in $\dLp{2}$.
Note that $\{\mrphi;\psi\}$ is a tight $\dm$-framelet in $\dLp{2}$ if and only if $(\{\mrphi;\psi\},\{\mrphi;\psi\})$ is a dual $\dm$-framelet in $\dLp{2}$. It is also obvious that $\{\mrphi; \psi\}_{(\eps_1,\ldots,\eps_s)}$ is a quasi-tight $\dm$-framelet if and only if $(\{\mrphi;\psi\}, \{\mrphi;\tilde{\psi}\})$ is a dual $\dm$-framelet with $\tilde{\psi}:=(\eps_1 \psi_1,\ldots,\eps_s \psi_s)^\tp$ which is almost $\psi$ except possible sign change.

To construct framelets from refinable vector functions, an \emph{oblique extension principle (OEP)} was introduced. The univariate scalar framelet version of the OEP was introduced in \cite{dhrs03} and independently in \cite{chs02}. The OEP for multivariate scalar framelets was studied in \cite[Proposition~3.3]{han03-0} and \cite{han12,han14}.
The univariate multiframelet version of the OEP was studied in \cite[Theorem~3.1]{hm03} and \cite[Theorem~1.1]{han09} (also see
\cite[Theorem~6.4.1]{hanbook}).
Its corresponding multivariate version is as follows:

\begin{theorem}[\emph{Oblique Extension Principle (OEP)}]\label{thm:df}
	Let $\dm$ be a $d\times d$ dilation matrix. Let $\theta,\tilde{\theta},a,\tilde{a}\in \dlrs{0}{r}{r}$ and $\phi,\tilde{\phi}\in (\dLp{2})^r$ be compactly supported $\dm$-refinable vector functions with refinement filters $a$ and $\tilde{a}$, respectively.
For matrix-valued filters $b,\tilde{b}\in \dlrs{0}{s}{r}$,
	define $\mrphi,\psi$ as in \eqref{mrphi:psi} and
%
%
\[
\wh{\tilde{\mrphi}}(\xi):=\wh{\tilde{\theta}}(\xi)\wh{\tilde{\phi}}(\xi),\quad \wh{\tilde{\psi}}(\xi):=\wh{\tilde{b}}(\dm^{-\tp}\xi)\wh{\tilde{\phi}}(\dm^{-\tp}\xi).
\]
Then $(\{\mrphi;\psi\}, \{\tilde{\mrphi}; \tilde{\psi}\})$ is a dual $\dm$-framelet in $\dLp{2}$ if the following conditions are satisfied:
	\begin{enumerate}
		\item[(1)] $\ol{\wh{\phi}(0)}^\tp \wh{\Theta}(0)\wh{\tilde{\phi}}(0)=1$ with $\wh{\Theta}(\xi):=\ol{\wh{\theta}(\xi)}^\tp \wh{\tilde{\theta}}(\xi)$;

\item[(2)] all entries in $\psi$ and $\tilde{\psi}$ have at least one vanishing moment, i.e., $\wh{\psi}(0)=\wh{\tilde{\psi}}(0)=0$.

		\item[(3)] $(\{a;b\},\{\tilde{a};\tilde{b}\})_{\Theta}$ forms an OEP-based dual $\dm$-framelet filter bank, i.e.,
		\be \label{dffb}
		\ol{\wh{{a}}(\xi)}^\tp \wh{\Theta}(\dm^{\tp} \xi){\wh{\tilde{a}}(\xi+2\pi \omega)}+\ol{\wh{{b}}(\xi)}^\tp {\wh{\tilde{b}}(\xi+2\pi \omega)}=\td(\omega)\wh{\Theta}(\xi) ,\quad\forall \xi\in\dR,\quad\omega\in\Omega_{\dm},\ee
		where $\td$ and $\Omega_{\dm}$ are defined as in \er{delta:seq} and \er{omega:dm}, respectively.
	\end{enumerate}
\end{theorem}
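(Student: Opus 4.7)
The plan is to establish the dual framelet expansion identity \eqref{framelet:expr} by a scale-telescoping argument driven by the filter bank condition \eqref{dffb}, and then combine it with Bessel (upper frame) bounds to conclude that $(\{\mrphi;\psi\},\{\tilde{\mrphi};\tilde{\psi}\})$ is a dual $\dm$-framelet. The Bessel bounds for both systems follow easily from the compact support of $\mrphi,\tilde{\mrphi},\psi,\tilde{\psi}$ together with their membership in an $\dLp{2}$ space via the standard nonhomogeneous affine-system estimate, so the only substantive work is the expansion identity.

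For $f,g\in\dLp{2}$ and $j\in\Z$, I would introduce the scale-$j$ bilinear forms
\[
\cV_j(f,g):=\sum_{k\in \dZ}|\det(\dm)|^{j}\la f,\mrphi(\dm^j\cdot-k)\ra\la\tilde{\mrphi}(\dm^j\cdot-k),g\ra,\qquad \cW_j(f,g):=\sum_{k\in\dZ}\la f,\psi_{\dm^j;k}\ra\la\tilde{\psi}_{\dm^j;k},g\ra,
\]
and prove the one-scale identity $\cV_{j+1}(f,g)-\cV_j(f,g)=\cW_j(f,g)$ for $f,g$ in the dense subspace of band-limited $\dLp{2}$ functions. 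The argument is a Plancherel computation: each form is rewritten as an integral of $\wh{f}\,\overline{\wh{g}}$ against a $2\pi\dZ$-periodic bracket built from $\wh{\mrphi},\wh{\tilde{\mrphi}},\wh{\psi},\wh{\tilde{\psi}}$; substituting $\wh{\mrphi}=\wh{\theta}\wh{\phi}$, $\wh{\tilde{\mrphi}}=\wh{\tilde{\theta}}\wh{\tilde{\phi}}$, the refinement relations $\wh{\phi}(\dm^{\tp}\xi)=\wh{a}(\xi)\wh{\phi}(\xi)$ and $\wh{\tilde{\phi}}(\dm^{\tp}\xi)=\wh{\tilde{a}}(\xi)\wh{\tilde{\phi}}(\xi)$, and the definitions of $\psi,\tilde{\psi}$, and then decomposing $\xi$ into cosets $\xi+2\pi\omega$ with $\omega\in\Omega_{\dm}$ reduces the integrand of $\cV_{j+1}-\cV_j-\cW_j$ to exactly
\[
\overline{\wh{a}(\xi)}^\tp\wh{\Theta}(\dm^{\tp}\xi)\wh{\tilde{a}}(\xi+2\pi\omega)+\overline{\wh{b}(\xi)}^\tp\wh{\tilde{b}}(\xi+2\pi\omega)-\td(\omega)\wh{\Theta}(\xi),
\]
which vanishes identically by condition~(3). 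Iterating this from $j=0$ to $j=N$ yields $\cV_{N+1}(f,g)-\cV_0(f,g)=\sum_{j=0}^N\cW_j(f,g)$.

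To pass $N\to\infty$, I need the quasi-projection limit $\cV_N(f,g)\to\la f,g\ra$. Since $\mrphi,\tilde{\mrphi}$ are compactly supported, the shift bracket $\sum_{k\in\dZ}\overline{\wh{\mrphi}(\eta+2\pi k)}^\tp\wh{\tilde{\mrphi}}(\eta+2\pi k)$ is continuous in $\eta$, and at $\eta=0$ equals $\overline{\wh{\mrphi}(0)}^\tp\wh{\tilde{\mrphi}}(0)=\overline{\wh{\phi}(0)}^\tp\wh{\Theta}(0)\wh{\tilde{\phi}}(0)=1$ by condition~(1); evaluating the bracket at $\eta=(\dm^N)^{-\tp}\xi$ and applying dominated convergence on a fundamental domain then delivers the desired limit for band-limited $f,g$. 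Condition~(2), namely $\wh{\psi}(0)=\wh{\tilde{\psi}}(0)=0$, secures absolute and unconditional convergence of $\sum_{j=0}^\infty\cW_j(f,g)$ on all of $\dLp{2}$ in tandem with the Bessel bounds, and a density argument then promotes the identity to every $f,g\in\dLp{2}$. The lower frame bounds in \eqref{framelet} for both systems finally drop out by applying Cauchy--Schwarz to the resulting expansion against the Bessel bound of the opposite system. The main obstacle I anticipate is the rigorous multivariate quasi-projection limit $\cV_N\to\la\cdot,\cdot\ra$: the coset decomposition ranges over $\Omega_{\dm}$ of size $|\det(\dm)|$, and the interchange of $N\to\infty$ with the sum over $k\in\dZ$ must be justified carefully via compact support and dominated convergence, which is more delicate here than in the univariate case.
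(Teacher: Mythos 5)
The paper does not actually prove Theorem~\ref{thm:df}: it is presented as the multivariate multiframelet version of the OEP and justified by citation (to \cite{chs02,dhrs03,hm03,han09,han12} and \cite[Theorem~6.4.1]{hanbook}), so there is no in-paper argument to compare against. Measured against the proofs in those references, your outline follows the same standard route --- a one-scale telescoping identity driven by \eqref{dffb}, a quasi-projection limit from condition (1), Bessel bounds and a density argument --- and the overall architecture is sound. Two steps, however, are justified incorrectly as written.

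First, the quasi-projection limit. You claim that the $2\pi\dZ$-periodic bracket $\sum_{k\in\dZ}\ol{\wh{\mrphi}(\eta+2\pi k)}^\tp\wh{\tilde{\mrphi}}(\eta+2\pi k)$ evaluated at $\eta=0$ equals the single term $\ol{\wh{\mrphi}(0)}^\tp\wh{\tilde{\mrphi}}(0)$. That is false in general: the bracket at the origin also contains all the terms $\ol{\wh{\mrphi}(2\pi k)}^\tp\wh{\tilde{\mrphi}}(2\pi k)$ with $k\ne 0$, and nothing in conditions (1)--(3) forces these to vanish, so the bracket at $0$ need not equal $1$. The limit $\cV_N(f,g)\to\la f,g\ra$ is nevertheless true, but for a different reason. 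Writing, via Plancherel and Parseval for Fourier series,
\[
\cV_N(f,g)=(2\pi)^{-d}\int_{\dR}\sum_{k\in\dZ}\wh{f}\bigl(\xi+2\pi\dm^{\tp N}k\bigr)\,\ol{\wh{\mrphi}\bigl((\dm^{\tp})^{-N}\xi+2\pi k\bigr)}^\tp\,\wh{\tilde{\mrphi}}\bigl((\dm^{\tp})^{-N}\xi\bigr)\,\ol{\wh{g}(\xi)}\,d\xi,
\]
one sees that for band-limited $f,g$ every summand with $k\ne0$ is annihilated once $N$ is large, because $\min_{k\ne 0}\|\dm^{\tp N}k\|\to\infty$ pushes $\xi+2\pi\dm^{\tp N}k$ out of $\operatorname{supp}\wh{f}$; only the diagonal term $\ol{\wh{\mrphi}((\dm^{\tp})^{-N}\xi)}^\tp\wh{\tilde{\mrphi}}((\dm^{\tp})^{-N}\xi)$ survives, and it tends to $\ol{\wh{\mrphi}(0)}^\tp\wh{\tilde{\mrphi}}(0)=\ol{\wh{\phi}(0)}^\tp\wh{\Theta}(0)\wh{\tilde{\phi}}(0)=1$ by continuity and condition (1). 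So it is the collapse of the off-diagonal terms, not a value of the periodic bracket, that delivers the limit; your write-up needs this repair.

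Second, the Bessel bound for $\{\psi_{\dm^j;k}\}_{j\ge 0,\,k\in\dZ}$ does not follow from compact support and membership in $\dLp{2}$ alone: each fixed scale $j$ already contributes roughly $\|\wh{\psi}(0)\|^2\|f\|^2_{\dLp{2}}$ to the sum, so without $\wh{\psi}(0)=0$ the series over $j\ge0$ diverges. Condition (2) is therefore used twice --- once for the Bessel property itself and once for the convergence of the telescoped series --- and the first use should be made explicit. With these two corrections the argument is the standard one and goes through.
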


By Theorem~\ref{thm:df}, the key step to obtain an OEP-based dual framelet is the construction of an OEP-based dual framelet filter bank $(\{a;b\},\{\tilde{a};\tilde{b}\})_{\Theta}$ satisfying \eqref{dffb} of Theorem~\ref{thm:df}.
Let us now rewrite \eqref{dffb} into a matrix form below.
For $\gamma\in \dZ$ and $u\in (\dsq)^{s\times r}$, the \emph{$\gamma$-coset sequence} of $u$ with respect to $\dm$ is the sequence $u^{[\gamma;\dm]}\in (\dsq)^{s\times r}$ given by
$$
u^{[\gamma;\dm]}(k)=u(\gamma+\dm k),\quad k\in\dZ.
$$
Trivially,
$\wh{u}(\xi)=\sum_{\gamma\in\Gamma_{\dm}}
\wh{u^{[\gamma;\dm]}}(\dm^{\tp}\xi)e^{-i\gamma\cdot\xi}$,
where $\Gamma_{\dm}$ is a complete set of canonical representatives of the quotient group $\dZ/[\dm\dZ]$ given by
\be\label{enum:gamma}
\Gamma_{\dm}:=\{\ga{1},\dots,\ga{d_{\dm}}\}
\quad \mbox{with}\quad \ga{1}:=0
\quad \mbox{such that $\dZ$ is the disjoint union of}\; \Gamma_{\dm}+\dm\dZ.
\ee
Let $\Omega_{\dm}$ be defined in
\eqref{omega:dm} and note that $\#\Omega_{\dm}=\#\Gamma_{\dm}=d_\dm:=|\det(\dm)|$.
Define $\FF_{r;\dm}(\xi)$ to be the $(rd_{\dm})\times (rd_{\dm})$ matrix below
\be\label{Fourier}
\FF_{r;\dm}(\xi):=\left(e^{-i\ga{l}\cdot(\xi+2\pi\om{k})}I_r\right)_{1\leq l,k\leq d_\dm}.
\ee	

For $\omega\in\Omega_{\dm}$ and $u\in\dlrs{0}{r}{r}$, let $D_{u,\omega;\dm}(\xi)$ and $E_{u,\omega;\dm}(\xi)$ be the $(rd_{\dm} )\times (rd_{\dm})$ block matrices, whose $(l,k)$-th $r\times r$ blocks are given by
\be\label{Duni}(D_{u,\omega;\dm}(\xi))_{l,k}:=
\begin{cases}\wh{u}(\xi+2\pi\omega), &\text{if }\om{l}+\omega-\om{k}\in\dZ.\\
	0, &\text{otherwise},\end{cases}
\ee
and
\be\label{Euni}(E_{u,\omega;\dm}(\xi))_{l,k}:=
\wh{u^{[\ga{k}-\ga{l};\dm]}}
(\xi)e^{-i\ga{k}\cdot(2\pi\omega)}.
\ee
Following the lines of the proof of \cite[Lemma 7]{dhacha},  we have
\be\label{DEF}
\FF_{r;\dm}(\xi)D_{u,\omega;\dm}(\xi)
\ol{\FF_{r;\dm}(\xi)}^{\tp}=d_{\dm} E_{u,\omega;\dm}(\dm^{\tp}\xi),\qquad \xi\in\dR, \quad \omega\in \Omega_{\dm}.
\ee	
Recall that $P_{u;\dm}(\xi):=\big[\wh{u}(\xi+2\pi\om{1}),
\wh{u}(\xi+2\pi\om{2}),\dots,\wh{u}(\xi+2\pi\om{d_{\dm}})\big]$
in \eqref{Pb}.
It is straightforward to check that
$P_{u;\dm}(\xi)=Q_{u;\dm}(\dm^\tp \xi) \FF_{r;\dm}(\xi)$, where
\be \label{Qu}
Q_{u;\dm}(\xi):=\big[\wh{u^{[\ga{1};\dm]}}(\xi),\wh{u^{[\ga{2};\dm]}}(\xi),\dots,\wh{u^{[\ga{d_{\dm}};\dm]}}(\xi)\big].
\ee
Since $\ol{\FF_{r;\dm}(\xi)}^{\tp}\FF_{r;\dm}(\xi)=d_{\dm} I_{d_\dm r}$,
it is trivial to observe that
$P_{u;\dm}(\xi)\ol{\FF_{r;\dm}(\xi)}^{\tp}=
d_{\dm} Q_{u;\dm}(\dm^\tp \xi)$.
Now by \eqref{DEF}, it is clear that \er{dffb} is equivalent to
\be\label{dffb:coset}
\cN_{a,\tilde{a},\Theta}(\xi)=\ol{Q_{b;\dm}(\xi)}^\tp Q_{\tilde{b};\dm}(\xi)
\quad \mbox{with}\quad
\cN_{a,\tilde{a},\Theta}(\xi):=
d_{\dm}^{-1}E_{\Theta,0;\dm}(\xi)-
\ol{Q_{a;\dm}(\xi)}^\tp \wh{\Theta}(\xi) Q_{\tilde{a};\dm}(\xi).
\ee
To obtain a dual framelet filter bank, we have to factorize the matrix $\cN_{a,\tilde{a},\Theta}$ of $2\pi\dZ$-periodic trigonometric polynomials such that the constructed filters $b$ and $\tilde{b}$ have desired orders of vanishing moments. Such a factorization for \eqref{dffb:coset} with $d\ge 2$
is linked to syzygy modules of multivariate polynomial matrices (see \cite{Ehler07,eh08,han03-0})
and is difficult to solve, because there does not exist special factors, played by $(1-e^{-i\xi})^m$ in dimension one, for the vanishing moments of filters $b$ and $\tilde{b}$.
Note that $\{a;b\}_{\Theta,(\eps_1,\ldots,\eps_s)}$ is a quasi-tight $\dm$-framelet filter bank if and only if $(\{a;b\},\{a;\tilde{b}\})_\Theta$ is a dual $\dm$-framelet filter bank with $\tilde{b}:=\DG(\eps_1,\ldots,\eps_s) b$.
By $\cM_{a,\Theta}(\xi)=\ol{\FF_{r;\dm}(\xi)}^\tp \cN_{a,a,\Theta}(\dm^\tp \xi) \FF_{r;\dm}(\xi)$,
the matrix factorization in \eqref{spectral} is equivalent to the following special form of \eqref{dffb:coset}:
\be \label{spectral:1}
\ol{Q_{b;\dm}(\xi)}^\tp \DG(\eps_1,\ldots,\eps_s) Q_{b;\dm}(\xi)=\cN_{a,a,\Theta}(\xi).
\ee
In particular, for a tight $\dm$-framelet filter bank $\{a;b\}_{\Theta}$, the above identity in \eqref{spectral:1} with $\eps_1=\cdots=\eps_s=1$ is simply the standard matrix spectral factorization problem.
Therefore, to construct a multivariate tight framelet through OEP, we must obtain a spectral factorization of the multivariate Hermitian matrix $\cN_{a,a,\Theta}$ of $2\pi\dZ$-periodic trigonometric polynomials, requiring $\cN_{a,a,\Theta}(\xi)\ge 0$ for all $\xi\in \dR$.
As well explained in \cite{cpss13,cpss15}, obtaining a spectral factorization of a Hermitian trigonometric polynomial matrix is extremely difficult when $d\ge 2$, and is known to be a challenging problem in algebraic geometry.

\subsection{Discrete multiframelet transforms}

Following \cite{han13}, we now study a discrete multiframelet transform employing an OEP-based dual framelet filter bank.
By $(\dsq)^{s\times r}$ we denote the linear space of all sequences $v: \dZ \rightarrow \C^{s\times r}$. For a filter $a\in \dlrs{0}{r}{r}$, we define the filter $a^\star$ via $\wh{a^\star}(\xi):=\ol{\wh{a}(\xi)}^\tp$, or equivalently,
$a^\star(k):=\ol{a(-k)}^\tp$ for all $k\in \dZ$.
We define the \emph{convolution} of two filters via
\[
[v*a](n):=\sum_{k\in \Z} v(k) a(n-k),\quad n\in \dZ,\quad v\in(\dsq)^{s\times r},\quad a\in\dlrs{0}{r}{r}.
\]
Let $\dm$ be a $d\times d$ dilation matrix and $a\in\dlrs{0}{r}{r}$ be a filter. We define the \emph{subdivision} and \emph{transition} operators as follows:
\[
\sd_{a,\dm} v=|\det(\dm)|^{\frac{1}{2}}[v\uparrow\dm]*a,\qquad\tz_{a,\dm} v=|\det(\dm)|^{\frac{1}{2}}[v*a^\star](\dm \cdot),\quad v\in(\dsq)^{s\times r},
\]
where $v\uparrow\dm$ is the sequence defined by $[v\uparrow\dm](k)=v(\dm^{-1}k)$ if $k\in \dm\dZ$ and $[v\uparrow\dm](k)=0$ otherwise. Let $a,\tilde a, \theta, \tilde{\theta}\in \lrs{0}{r}{r}$ and
$b,\tilde{b}\in \lrs{0}{s}{r}$ be finitely supported filters.
Define a filter $\Theta:=\theta^\star*\tilde{\theta}$, i.e., $\wh{\Theta}(\xi):=\ol{\wh{\theta}(\xi)}^\tp \wh{\tilde{\theta}}(\xi)$.
We now state the discrete multiframelet transform using these finitely supported filters. For any $J\in \N$ and any (vector-valued) input signal/data $v_0\in (\dsq)^{1\times r}$, the $J$-level discrete framelet transform using a filter bank $(\{a;b\},\{\tilde{a};\tilde{b}\})_{\Theta}$ is as follows:
\begin{enumerate}
	\item[(S1)] \emph{Decomposition:} Recursively compute the framelet coefficients $v_j,w_j,j=1,\dots,J$ by
\[
v_j:=\tz_{a,\dm} v_{j-1},\quad w_j:=\tz_{b,\dm} v_{j-1}, \qquad j=1,\ldots, J.
\]
	\item[(S2)] \emph{Reconstruction:} Compute $\tilde{v}_J:=v_J*\Theta$ and recursively compute $\tilde{v}_{j-1}, j=J,\ldots,1$ by
\[
\tilde{v}_{j-1}:= \sd_{\tilde{a},\dm} \tilde{v}_j+\sd_{\tilde{b},\dm} w_{j}, \qquad j=J,\ldots,1.
\]
	\item[(S3)] Recover $\mathring{v}_0$ from $\tilde{v}_0$
through the deconvolution  $\mathring{v}_0*\Theta=\tilde{v}_0$.
\end{enumerate}

\subsection{Perfect reconstruction and compactness of discrete multiframelet transforms}

We say that the $J$-level discrete framelet transform employing the filter bank $(\{a;b\},\{\tilde{a};\tilde{b}\})_{\Theta}$ has \emph{the perfect reconstruction property } if any original input signal $v_0$ can be exactly recovered through the above $J$-level discrete framelet reconstruction steps in (S1)--(S3).
%

For $\Theta\in\dlrs{0}{r}{r}$, define the \emph{convolution operator} $C_\Theta: (\dsq)^{1\times r}\rightarrow (\dsq)^{1\times r}$ by
\be\label{conv:op}
C_\Theta (v):=v*\Theta,\qquad \forall v\in(\dsq)^{1\times r}.
\ee
Observe that a $J$-level discrete framelet transform employing the filter bank $(\{a;b\},\{\tilde{a};\tilde{b}\})_\Theta$ has the perfect reconstruction property if and only if
\be \label{pr:0}
\sd_{\tilde{a},\dm}([\tz_{a,\dm} v]*\Theta)+
\sd_{\tilde{b},\dm} (\tz_{b,\dm} v)=v*\Theta
\ee
holds for all $v\in (\dsq)^{1\times r}$ and the convolution operator $C_\Theta$ in \er{conv:op} is bijective.

\begin{lemma}\label{lem:conv}
For $\Theta\in \dlrs{0}{r}{r}$, the mapping $C_\Theta: (\dsq)^{1\times r}\rightarrow (\dsq)^{1\times r}$ is bijective if and only if $\Theta$ is strongly invertible, that is, $\wh{\Theta}^{-1}$ is an $r\times r$ matrix of $2\pi\dZ$-periodic trigonometric polynomials.
\end{lemma}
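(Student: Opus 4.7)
The plan is to establish the easy direction directly and dispatch the other one via a probe of $C_\Theta$ by complex exponential sequences. For sufficiency, if $\Theta$ is strongly invertible there is a finitely supported filter $\Upsilon\in\dlrs{0}{r}{r}$ with $\wh{\Upsilon}=\wh{\Theta}^{-1}$, whence $\Theta*\Upsilon=\Upsilon*\Theta=\td I_r$. Since both $\Theta$ and $\Upsilon$ are finitely supported, convolution is associative on $(\dsq)^{1\times r}$, so $C_\Upsilon\circ C_\Theta=C_\Theta\circ C_\Upsilon=\mathrm{Id}$ and $C_\Theta$ is bijective with inverse $C_\Upsilon$.

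For necessity I would argue the contrapositive. For any $\xi^0\in\C^d$ and any row vector $w\in\C^{1\times r}$, define the exponential sequence $v_{w,\xi^0}(k):=w\,e^{ik\cdot\xi^0}\in\C^{1\times r}$, $k\in\dZ$. Since $\Theta$ is finitely supported, the convolution sum reduces to a finite sum and a direct reindexing yields
\begin{equation*}
[v_{w,\xi^0}*\Theta](n)=\sum_{k\in\dZ}v_{w,\xi^0}(k)\Theta(n-k)=w\,\wh{\Theta}(\xi^0)\,e^{in\cdot\xi^0},\qquad n\in\dZ.
\end{equation*}
Consequently, if for some $\xi^0\in\C^d$ there is a nonzero $w\in\C^{1\times r}$ with $w\wh{\Theta}(\xi^0)=0$, then $v_{w,\xi^0}$ is a nonzero element of $(\dsq)^{1\times r}$ lying in the kernel of $C_\Theta$, contradicting injectivity. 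Injectivity of $C_\Theta$ therefore forces $\wh{\Theta}(\xi^0)$ to be invertible as a complex matrix for every $\xi^0\in\C^d$, equivalently, $\det\wh{\Theta}(\xi)\neq 0$ throughout $\C^d$.

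The final step is to view $\det\wh{\Theta}$ as a Laurent polynomial $P(z_1,\ldots,z_d)$ in $z_j=e^{-i\xi_j}$ and invoke the standard algebraic fact that a Laurent polynomial in $d$ variables which is nowhere zero on $(\C^*)^d$ must be a nonzero monomial $cz^\alpha$. (Sketch: factor out a monomial to reduce to an ordinary polynomial $\tilde P$ with $\tilde P(0)\neq 0$; any irreducible component of $\{\tilde P=0\}$ would have to be contained in some coordinate hyperplane $\{z_i=0\}$, forcing $z_i\mid\tilde P$ and contradicting $\tilde P(0)\neq 0$.) Once $\det\wh{\Theta}$ is a nonvanishing monomial, Cramer's rule $\wh{\Theta}^{-1}=(\det\wh{\Theta})^{-1}\mathrm{adj}\,\wh{\Theta}$ expresses every entry of $\wh{\Theta}^{-1}$ as a Laurent polynomial, so $\Theta$ is strongly invertible. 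The only nontrivial ingredient is the Laurent polynomial fact above; the remainder is routine bookkeeping, and the exponential-probe argument only uses injectivity of $C_\Theta$, so it simultaneously shows that injectivity, bijectivity, and strong invertibility of $\Theta$ are all equivalent.
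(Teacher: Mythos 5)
Your proof is correct and follows essentially the same route as the paper's: the easy direction by convolving with the inverse filter, and the converse by probing $C_\Theta$ with exponential sequences $k\mapsto w\,e^{ik\cdot\xi_0}$ at a complex frequency where $\wh{\Theta}$ drops rank (the paper packages the left null vector $w$ as the first row of a matrix $Q$, but it is the same computation). The only substantive difference is that you state and sketch the Laurent-polynomial fact the paper uses silently --- that $\det\wh{\Theta}$ nonvanishing on $(\C^*)^d$ forces it to be a monomial; one small correction to your sketch: the achievable normalization is that no $z_i$ divides $\tilde P$ (which is exactly what $z_i\mid\tilde P$ contradicts), since a nonzero constant term cannot always be arranged by factoring out a monomial (e.g.\ $z_1+z_2$).
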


\bp Suppose that $C_\Theta$ is bijective, but $\Theta$ is not strongly invertible. This means that $\det(\wh{\Theta})$ is not a non-trivial monomial. Here a non-trivial monomial is of the form $ce^{ik\cdot\xi}$ for some $c\in\C\setminus\{0\}$ and $k\in\dZ$. Thus, $\det(\wh{\Theta}(\xi_0))=0$ for some $\xi_0\in\dC$. We start with the case $r=1$. In this case, we have
$0=\wh{\Theta}(\xi_0)=\sum_{k\in\dZ}\Theta(k)e^{-ik\cdot\xi_0}$.
Define $v\in  \dsq$ by
\be\label{v0}
v(k)=e^{ik\cdot\xi_0},\quad k\in\dZ.
\ee
By definition, we have
$$(v*\Theta)(n)=\sum_{k\in\dZ}v(k)\Theta(n-k)=e^{in\cdot\xi_0}\sum_{k\in\dZ}e^{-i(n-k)\xi_0}\Theta(n-k)=e^{in\cdot\xi_0}\wh{\Theta}(\xi_0)=0,\quad\forall n\in\dZ,$$
which contradicts the injectivity of $C_\Theta$.
For $r>1$, as $\det(\wh{\Theta}(\xi_0))=0$, we can find an invertible $r\times r$ matrix $Q$ such that all elements in the first row of $Q\wh{\Theta}(\xi_0)$ are zero. Let $v\in  \dsq$ be defined as in \er{v0}. Define $u:=(v,0,\dots,0)Q\in (\dsq)^{1\times r}$. It follows that $u*\Theta=0$, which again contradicts the assumption that $C_\Theta$ is injective. Therefore, $\Theta$ must be strongly invertible.

Conversely, if $\Theta$ is strongly invertible, then $\Theta^{-1}\in\dlrs{0}{r}{r}$ with $\wh{\Theta^{-1}}:=[\wh{\Theta}]^{-1}$. Consequently, we have
$v=(v*\Theta)*\Theta^{-1}=(v*\Theta^{-1})*\Theta$ for $v\in (\dsq)^{1\times r}$.
Hence, $C_\Theta$ is bijective.
\ep

We now characterize the perfect reconstruction property of a $J$-level discrete framelet transform.

\begin{theorem}\label{thm:ffrt:pr}
	Let $a,\tilde a, \theta, \tilde{\theta}\in \dlrs{0}{r}{r}$ and
	$b,\tilde{b}\in \dlrs{0}{s}{r}$ be finitely supported filters. Define $\Theta:=\theta^\star*\tilde{\theta}$. Then the following statements are equivalent to each other:
	\begin{enumerate}
		\item[(1)] For any $J\in \N$, the $J$-level discrete framelet transform employing the filter bank $(\{a;b\};\{\tilde{a};\tilde{b}\})_{\Theta}$ has the perfect reconstruction property.
		
		\item[(2)]  Both filters $\theta$ and $\tilde{\theta}$ are strongly invertible and $(\{a;b\},\{\tilde{a};\tilde{b}\})_{\Theta}$ is an OEP-based dual $\dm$-framelet filter bank satisfying \eqref{dffb}.
	\end{enumerate}
\end{theorem}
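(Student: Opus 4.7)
The plan is to split ``perfect reconstruction'' into two independent requirements---the \emph{one-level identity} \eqref{pr:0} already isolated in the text, and the invertibility of the deconvolution step (S3)---and then to characterize each separately. Lemma~\ref{lem:conv} reduces the second requirement to strong invertibility of $\Theta$, while a direct Fourier-domain computation will translate the first into the filter-bank identity \eqref{dffb}.

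For (2)$\Rightarrow$(1), first note that $\wh{\Theta}=\ol{\wh{\theta}}^\tp\wh{\tilde{\theta}}$ gives $\det\wh{\Theta}=\ol{\det\wh{\theta}}\cdot\det\wh{\tilde{\theta}}$, a product of two non-trivial monomials, hence a non-trivial monomial; so $\Theta$ is strongly invertible and $C_\Theta$ is a bijection of $(\dsq)^{1\times r}$ by Lemma~\ref{lem:conv}. Next, I would derive \eqref{pr:0} from \eqref{dffb} on the Fourier side. Using $\wh{\sd_{\tilde{u},\dm}w}(\eta)=d_{\dm}^{1/2}\wh{w}(\dm^\tp\eta)\wh{\tilde{u}}(\eta)$ together with the aliasing identity $\wh{\tz_{u,\dm}v}(\dm^\tp\eta)=d_{\dm}^{-1/2}\sum_{\omega\in\Omega_{\dm}}\wh{v}(\eta+2\pi\omega)\ol{\wh{u}(\eta+2\pi\omega)}^\tp$, the Fourier transform of the left-hand side of \eqref{pr:0} expands as $\sum_{\omega\in\Omega_{\dm}}\wh{v}(\eta+2\pi\omega)\bigl[\ol{\wh{a}(\eta+2\pi\omega)}^\tp\wh{\Theta}(\dm^\tp\eta)\wh{\tilde{a}}(\eta)+\ol{\wh{b}(\eta+2\pi\omega)}^\tp\wh{\tilde{b}}(\eta)\bigr]$; substituting $\xi=\eta+2\pi\omega$ in \eqref{dffb} (and using $\wh{\Theta}(\dm^\tp(\eta+2\pi\omega))=\wh{\Theta}(\dm^\tp\eta)$, since $\dm^\tp\omega\in\dZ$) collapses this sum to $\wh{v}(\eta)\wh{\Theta}(\eta)$, which is the Fourier transform of the right-hand side. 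Finally, to promote the one-level identity to arbitrary $J$, I would establish the loop invariant $\tilde{v}_j=v_j*\Theta$ by descending induction on $j=J,\ldots,0$: the base case is the definition of $\tilde{v}_J$ in (S2), and the inductive step applies \eqref{pr:0} to $v_{j-1}$ after substituting $v_j=\tz_{a,\dm}v_{j-1}$ and $w_j=\tz_{b,\dm}v_{j-1}$. Thus $\tilde{v}_0=v_0*\Theta$, and the bijectivity of $C_\Theta$ forces (S3) to return $\mathring{v}_0=v_0$.

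For the reverse direction (1)$\Rightarrow$(2), I take $J=1$ and read off what PR forces. Step (S3) requires the equation $\mathring{v}_0*\Theta=\tilde{v}_0$ to have a unique solution for every input, so $C_\Theta$ must be injective; a careful inspection of the proof of Lemma~\ref{lem:conv} shows that injectivity alone suffices to force strong invertibility of $\Theta$. Then $\det\wh{\Theta}=\ol{\det\wh{\theta}}\cdot\det\wh{\tilde{\theta}}$ is a non-trivial monomial, and since the Laurent polynomial ring over $\C$ is a UFD whose units are exactly the non-trivial monomials, each factor must itself be a non-trivial monomial; hence both $\theta$ and $\tilde{\theta}$ are strongly invertible. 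Combining injectivity of $C_\Theta$ with PR gives the one-level identity $\tilde{v}_0=v_0*\Theta$ for every $v_0$, i.e., \eqref{pr:0}, and running the Fourier expansion above in reverse while varying $v_0$ then yields \eqref{dffb}. The main obstacle throughout is precisely this Fourier-side conversion between \eqref{pr:0} and \eqref{dffb}: once the aliasing sum over $\Omega_{\dm}$ and the interaction of the shift $\xi\mapsto\xi+2\pi\omega$ with the dilation $\dm^\tp$ are bookkept correctly, the remaining pieces---deconvolution bijectivity, induction up the scale levels, and the UFD argument transferring strong invertibility from $\Theta$ to its two factors---are essentially mechanical.
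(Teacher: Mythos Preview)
Your proposal is correct and follows essentially the same route as the paper's proof: split perfect reconstruction into the one-level identity \eqref{pr:0} plus bijectivity of $C_\Theta$, invoke Lemma~\ref{lem:conv} (whose proof indeed uses only injectivity) for the latter, and convert between \eqref{pr:0} and \eqref{dffb} via the aliasing Fourier computation, testing against $\wh{v}(\xi)=e^{-i\gamma\cdot\xi}I_r$ for $\gamma\in\Gamma_{\dm}$. The one detail you leave implicit is that the Fourier derivation of \eqref{pr:0} is only valid for finitely supported $v$, and the extension to all $v\in(\dsq)^{1\times r}$ requires the locality of the subdivision and transition operators (the paper cites \cite{han13} for this); otherwise your UFD argument for transferring strong invertibility from $\Theta$ to $\theta,\tilde\theta$ and your explicit descending induction on $j$ actually fill in steps the paper merely asserts.
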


\bp (1) $\Rightarrow$ (2). Suppose that item (1) holds. By Lemma~\ref{lem:conv},  $\Theta$ is strongly invertible, and thus implies that both $\theta$ and $\tilde{\theta}$ are strongly invertible. On the other hand, observe that
\begin{align}&\wh{\sd_{a,\dm} v}(\xi)=|\det(\dm)|^{1/2} \wh{v}(\dm^{\tp} \xi) \wh{a}(\xi),\label{sd:fourier}\\
&\wh{\tz_{a,\dm} v}(\xi)=|\det(\dm)|^{-1/2}
\sum_{\omega\in\Omega_{\dm}} \wh{v}(\dm^{-\tp}\xi+2\pi\omega)
\ol{\wh{a}(\dm^{-\tp}\xi+2\pi\omega)}^\tp,\label{tz:fourier}\end{align}
for all $v\in\dlrs{0}{r}{r}$. Therefore, \er{pr:0} yields that for all $v\in \dlrs{0}{r}{r}$,
\be \label{pr:2}
\sum_{\omega\in\Omega_{\dm}}
\wh{v}(\xi+2\pi\omega)
\left[\ol{\wh{a}(\xi+2\pi\omega)}^\tp\wh{\Theta}(\dm^{\tp} \xi)
\wh{\tilde{a}}(\xi)+\ol{\wh{b}(\xi+2\pi\omega)}^\tp
\wh{\tilde{b}}(\xi)\right]=\wh{v}(\xi)\wh{\Theta}(\xi).
\ee
Plugging $\wh{v}(\xi)=e^{-i\gamma\cdot\xi}I_r$ with $\gamma\in\Gamma_{\dm} $ into \er{pr:2} and using the same argument as in the proof of \cite[Theorem~2.1]{han13}, we deduce from \er{pr:2} that \er{dffb} must hold. This proves (1) $\Rightarrow$ (2).

(2) $\Rightarrow$ (1). Suppose item (2) holds. Then \er{dffb} implies that \er{pr:0} must hold for all $v\in \dlrs{0}{1}{r}$. Use the locality of the subdivision and transition operators (see the proof of \cite[Theorem 2.1]{han13}),  we can prove that \er{pr:0} holds for all $v\in(\dsq)^{1\times r}$. Noting that $\Theta$ is strongly invertible, we conclude that the $J$-level discrete framelet transform employing the filter bank $(\{a;b\};\{\tilde{a};\tilde{b}\})_{\Theta}$ has the perfect reconstruction property for every $J\in\N$. This proves (2) $\Rightarrow$ (1).
\ep

If both $\theta$ and $\tilde{\theta}$ are strongly invertible, then we see that the following filters are finitely supported:
\be\label{mr:fft:a}
\wh{\mra}(\xi):=\wh{\theta}(\dm^{\tp}\xi)\wh{a}(\xi)\wh{\theta}(\xi)^{-1},\quad \wh{\tilde{\mra}}(\xi):=\wh{\tilde{\theta}}(\dm^{\tp}\xi)\wh{\tilde{a}}(\xi)\wh{\tilde{\theta}}(\xi)^{-1},
\ee
\be\label{mr:fft:b} \wh{\mrb}(\xi):=\wh{b}(\xi)\wh{\theta}(\xi)^{-1},
\quad\wh{\tilde{\mrb}}(\xi):=\wh{\tilde{b}}(\xi)\wh{\tilde{\theta}}(\xi)^{-1}.
\ee
Therefore, instead of using the dual framelet filter bank $(\{a;b\},\{\tilde{a};\tilde{b}\})_{\Theta}$, we can implement a $J$-level discrete framelet transform using the new dual framelet filter bank $(\{\mra;\mrb\},\{\tilde{\mra};\tilde{\mrb}\})_{\td I_r}$ as follows:
\begin{enumerate}
	\item[(S1')] The $J$-level discrete framelet decomposition: recursively compute
\[
\mrv_j:=\tz_{\mra,\dm} \mrv_{j-1},\quad \mrw_j:=\tz_{\mrb,\dm} \mrv_{j-1}, \qquad j=1,\ldots, J,
\]
for an input data $\mrv_0\in(\dsq)^{1\times r}$.
	
\item[(S2')] The $J$-level discrete framelet reconstruction: recursively compute $\tilde{\mrv}_j, j=J,\ldots,1$ by
\[
\tilde{\mrv}_{j-1}:= \sd_{\tilde{\mra},\dm} \tilde{\mrv}_j+\sd_{\tilde{\mrb},\dm} \mrw_{j}, \qquad j=J,\ldots,1.
\]
\end{enumerate}
We see that the deconvolution step disappears with the new filter bank $(\{\mra;\mrb\},\{\tilde{\mra};\tilde{\mrb}\})_{\td I_r}$, which greatly increases the efficiency of the discrete framelet transform.

 The essence of OEP is to replace the original pair $\phi$ and $\tilde{\phi}$ by another desired pair of $\dm$-refinable vector functions $\mrphi$ and $\tilde{\mrphi}$ satisfying
$$
\wh{\mrphi}(\dm^{\tp}\xi)
=\wh{\mra}(\xi)\wh{\mrphi}(\xi),\qquad \wh{\tilde{\mrphi}}(\dm^{\tp}\xi)=
\wh{\tilde{\mra}}(\xi)\wh{\tilde{\mrphi}}(\xi)
$$
where $\mra$ and $\tilde{\mra}$ are defined via \er{mr:fft:a}, so that we can construct a dual framelet $(\{\mrphi;\psi\},\{\tilde{\mrphi};\tilde{\psi}\})$ with the framelet generators $\psi$ and $\tilde{\psi}$ having the highest possible vanishing moments. However, quite often it is too much to expect that both $\theta$ and $\tilde{\theta}$ are strongly invertible. As a consequence, in most cases, we have to deal with the time-consuming deconvolution process in (S3) of the discrete framelet reconstruction scheme. Although the conditions in Theorem~\ref{thm:df} guarantee that the original input signal $v_0$ must be a solution of the deconvolution problem $v_0*\Theta=\tilde{v}_0$, if $\Theta$ is not strongly invertible, the deconvolution problem may have multiple (or even infinitely many) solutions, so that exact recovery cannot be guaranteed. For the case $r=1$, $\Theta$ is strongly invertible if and only if $\wh{\Theta}$ is a non-trivial monomial; i.e., $\wh{\Theta}(\xi)=ce^{-ik\cdot\xi}$ for some $c\in\C\setminus\{0\}$ and $k\in\dZ$. With such a choice of $\Theta$, we lose the main advantage of OEP for improving the orders of vanishing moments of framelet generators. As discussed in \cite{han09,hl19pp}, when $d=1$ and $r>1$, one can always construct a dual $\dm$-framelet through OEP in Theorem~\ref{thm:df}
from any pair of refinable vector functions such that the dual framelet has the highest possible vanishing moments and both $\theta$ and $\tilde{\theta}$ are strongly invertible. These results demonstrate great advantages of multiframelets over scalar framelets.

\subsection{Balancing property of discrete multiframelet transforms}

In many applications, the input data $v$ is given as a scalar sequence, i.e., $v\in\dsq$. However, the input to a multiframelet transform is a vector sequence in $(\sq)^{1\times r}$. Hence, we have to convert a scalar sequence to a vector sequence by using vector conversion. 
Let $\dn$ be a $d\times d$ integer matrix with $|\det(\dn)|=r$, and let $\Gamma_{\dn}$ be a complete set of canonical representatives of the quotient group $\dZ/[\dn\dZ]$ given by
\be\label{enum:dn}
\Gamma_{\dn}:=\{\ka{1},\ldots,\ka{r}\}
\quad \mbox{with}\quad \ka{1}:=0
\quad \mbox{such that $\dZ$ is the disjoint union of}\; \Gamma_{\dn}+\dn\dZ.
\ee
We define \emph{the standard vector conversion operator associated with $\dn$} via
\be\label{vec:con}
[E_{\dn} v](k):=(v(\dn k+\ka{1}),v(\dn k+\ka{2}),\ldots, v(\dn k+\ka{r})),\qquad k\in \dZ,\quad v\in\dsq.
\ee
It is obvious that $E_{\dn}$ is a linear bijective mapping.
For the case $d=1$, we have a natural choice $\dn=r$ and $\Gamma_{\dn}=\{0,1,\ldots,r-1\}$.

Let $\PL_{m-1}$ be the space of all $d$-variate polynomial sequences of degree less than $m$. The sparsity of a multiframelet transform is described by its ability to annihilate framelet coefficients $w_j$ for polynomial input data.
Let $(\{a;b\},\{\tilde{a};\tilde{b}\})_\Theta$ be a dual $\dm$-framelet filter bank and $(\{\mrphi;\psi\},\{\tilde{\mrphi};\tilde{\psi}\})$ be its corresponding dual $\dm$-framelet.
Define $m:=\sr(\tilde{a},\dm)$.
To have sparsity of a multiframelet transform, it is desired that the discrete framelet transform has the following properties:

\begin{enumerate}
\item[(1)] The operator $\tz_{a,\dm}$ is invariant on $E_{\dn} (\PL_{m-1})$, that is,
\be \label{bp:lowpass}
	\tz_{a,\dm} E_{\dn}(\pp)\in E_{\dn}(\PL_{m-1}),\qquad \forall\; \pp\in \PL_{m-1}.
\ee

	\item[(2)] The high-pass filter $b$ has \emph{order $m$ $E_{\dn}$-balanced vanishing moments}, that is,
	\be \label{bp:highpass}
	\tz_{b,\dm} E_{\dn} (\pp)=0, \qquad \forall\; \pp\in \PL_{m-1}.
	\ee
\end{enumerate}

Items (1) and (2) preserve sparsity for all levels when implementing a multi-level discrete framelet transform, because the framelet coefficients $w_j:=\tz_{b,\dm} \tz_{a,\dm}^{j-1} E_{\dn}(\pp)=0$ for all $\pp\in \PL_{m-1}$ and $j\in \N$. We define $\bvmo(b,\dm,\dn):=m$ with $m$ being the largest such integer in \er{bp:highpass}.
A discrete framelet transform or a filter bank $\{a;b\}$ is \emph{order $m$ $E_{\dn}$-balanced} if both \eqref{bp:lowpass} and \eqref{bp:highpass} hold. In particular, we define $\bpo(\{a;b\},\dm,\dn):=m$ with $m$ being the largest such integer satisfying both \eqref{bp:lowpass} and \eqref{bp:highpass}.
Balancing properties for multiwavelets have been studied in \cite{cj00,han09,hanbook,lv98,sel00} and references therein.

Let $a,\tilde{a},\theta,\tilde{\theta}\in\dlrs{0}{r}{r}$ and $b,\tilde{b}\in\dlrs{0}{s}{r}$ such that $(\{a;b\},\{\tilde{a};\tilde{b}\})_{\Theta}$ is an OEP-based dual $\dm$-multiframelet filter bank, where $\Theta=\theta^\star*\tilde{\theta}$. Suppose that $\phi,\tilde{\phi}\in (\dLp{2})^r$ are compactly supported $\dm$-refinable vector functions in $\dLp{2}$ satisfying $\wh{\phi}(\dm^{\tp}\xi)=\wh{a}(\xi)\wh{\phi}(\xi)$
and
$\wh{\tilde{\phi}}(\dm^{\tp}\xi)=\wh{\tilde{a}}(\xi)\wh{\tilde{\phi}}(\xi)$.
Let
\[
\wh{\mrphi}(\xi):=\wh{\theta}(\xi)\wh{\phi}(\xi),\quad
\wh{\psi}(\xi):=\wh{b}(\dm^{-\tp}\xi)\wh{\phi}(\dm^{-\tp}\xi)
\quad \mbox{and}\quad
\wh{\tilde{\mrphi}}(\xi):=\wh{\tilde{\theta}}(\xi)\wh{\tilde{\phi}}(\xi),\quad
\wh{\tilde{\psi}}(\xi):=\wh{\tilde{b}}(\dm^{-\tp}\xi)\wh{\tilde{\phi}}(\dm^{-\tp}\xi).
\]
If $\ol{\wh{\phi}(0)}^\tp \wh{\Theta}(0)\wh{\tilde{\phi}}(0)=1$ and $\wh{\psi}(0)=\wh{\tilde{\psi}}(0)=0$,
then Theorem~\ref{thm:df} tells us that $(\{\mrphi;\psi\},\{\tilde{\mrphi};\tilde{\psi}\})$ is a dual $\dm$-framelet in $\dLp{2}$. If $\sr(\tilde{a},\dm)=m$, we observe that $ \vmo(\psi)\le m$ and $\bpo(\{a;b\},\dm,\dn)\le \bvmo(b,\dm,\dn)\le m$.
If $\bpo(\{a;b\},\dm,\dn)=\bvmo(b,\dm,\dn)
=\vmo(\psi)=m$, then we say that the discrete framelet transform or the filter bank $\{a;b\}$ is \emph{order $m$ $E_{\dn}$-balanced}.
For $r>1$,  $\bpo(\{a,b\},\dm,\dn)<\vmo(\psi)$ often happens. Hence, having high vanishing moments on framelet generators does not guarantee the balancing property and thus  significantly reduces the sparsity of the associated discrete multiframelet transform. How to overcome this shortcoming has been extensively studied in the setting of functions in \cite{cj00,lv98,sel00} and in the setting of discrete framelet transforms in \cite{han09,han10,hanbook}.

The following result on properties of the subdivision and the transition operators that are related to the standard vector conversion operator were investigated, we refer the reader to \cite{han10} for detailed discussions and proofs of the following result.

\begin{theorem}\label{sd:tz:pl}	Let $\dm$ be a $d \times d$ dilation matrix, $s\in\N$ and $r\ge 2$ be positive integers. Let $\dn$ be a $d\times d$ integer matrix with $|\det(\dn)|=r$ and $E_{\dn}$ be the standard vector conversion operator associated with $\dn$ in \eqref{vec:con}.
Define $\Gamma_{\dn}:=\{\ka{1},\ldots,\ka{r}\}$ as in \eqref{enum:dn} and
\be \label{vgu:special}
\wh{\Vgu_{\dn}}(\xi):=\left( e^{i\dn^{-1}\ka{1}\cdot\xi},\ldots, e^{i \dn^{-1}\ka{r}\cdot\xi}\right),\qquad\xi\in\dR.
\ee
Define
$\PL_{m,y}:=\{\pp*y:\pp\in\PL_{m}\}$
for $y\in\dlrs{0}{1}{r}$.
	Then the following statements hold:
	\begin{enumerate}
		 \item[(1)]
$E_{\dn}(\PL_{m})=\PL_{m,y}\subseteq (\PL_{m})^{1\times r}$ with $y\in\dlrs{0}{1}{r}$ if and only if
%
\[
\wh{y}(\xi)=\wh{c}(\xi)\wh{\Vgu_{\dn}}(\xi)+\bo(\|\xi\|^{m+1})\text{ as }\xi\to 0\text{ for some }c\in\dlp{0}\text{ with }\wh{c}(0)\neq 0.
\]
		
		\item[(2)] For $u\in\dlrs{0}{r}{r}$ and $y\in\dlrs{0}{1}{r}$, $\tz_{u,\dm}\PL_{m,y}=\PL_{m,y}$ if and only if
\[
\wh{c}(\xi)\wh{y}(\dm^{\tp}\xi)=\wh{y}(\xi)\ol{\wh{u}(\xi)}^{\tp}+\bo(\|\xi\|^{m+1})\text{ as }\xi\to 0\text{ for some }c\in\dlp{0}\text{ with }\wh{c}(0)\neq 0.
\]
		
		\item[(3)] For $u\in\dlrs{0}{r}{r}$ and $y\in\dlrs{0}{1}{r}$, $\sd_{u,\dm}\PL_{m,y}\subseteq (\PL_{m})^{1\times r}$ if and only if
\[
\wh{y}(\dm^{\tp}\xi)\wh{u}(\xi+2\pi\omega)=\bo(\|\xi\|^{m+1}),\qquad\xi\to 0,\quad \omega\in\Omega_{\dm}\setminus\{0\}.
\]
		
		\item[(4)]For $u,\tilde{u}\in\dlrs{0}{r}{r}$ and $y\in\dlrs{0}{1}{r}$, $\sd_{u,\dm}\tz_{\tilde{u},\dm}(v)=v$ for all $v\in\PL_{m,y}$ if and only if
\[
\wh{y}(\xi)\ol{\wh{\tilde{u}}(\xi)}^{\tp}\wh{u}(\xi+2\pi\omega)=\td(\omega)\wh{y}(\xi)+\bo(\|\xi\|^{m+1}),\qquad\xi\to0,\quad\omega\in\Omega_{\dm}.
\]
\end{enumerate}
\end{theorem}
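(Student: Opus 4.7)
The plan is to translate everything into the frequency domain and characterize polynomial sequences by their Taylor jets at the origin. The backbone is a single technical lemma describing $\PL_{m,y}$, from which all four items follow by applying the Fourier formulas for $E_{\dn}$, $\sd_{u,\dm}$, and $\tz_{u,\dm}$.

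First, I would prove the fundamental identity $\wh v(\dn^{-\tp}\xi)=\wh{E_{\dn} v}(\xi)\,\ol{\wh{\Vgu_{\dn}}(\xi)}^{\tp}$ for all $v\in\dsq$, obtained by splitting the lattice $\dZ=\bigsqcup_{j=1}^{r}(\ka{j}+\dn\dZ)$ in the defining series for $\wh v$. Next I would establish the \emph{working lemma}: for $y,\tilde y\in\dlrs{0}{1}{r}$, one has $\PL_{m,y}=\PL_{m,\tilde y}$ if and only if there exists $c\in\dlp{0}$ with $\wh c(0)\ne 0$ such that $\wh y(\xi)=\wh c(\xi)\wh{\tilde y}(\xi)+\bo(\|\xi\|^{m+1})$ as $\xi\to 0$. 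This reflects that the action of a finitely supported $y$ on a polynomial sequence of degree at most $m$ depends only on the Taylor jet of $\wh y$ at the origin through order $m$, and that both sides are of dimension $\binom{m+d}{d}$ once $\wh y(0)\ne 0$. Applied to $\pp\in\PL_m$, the fundamental identity identifies $E_{\dn}(\PL_m)$ with $\PL_{m,\Vgu_{\dn}}$, which by the working lemma equals $\PL_{m,y}$ exactly under the condition stated in item (1).

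Items (2)--(4) would then be handled uniformly by substituting $v=\pp*y$ into the Fourier formulas \eqref{sd:fourier} and \eqref{tz:fourier}. For (2), plugging into the transition formula yields, after discarding the branches $\omega\ne 0$ whose contributions are of order $\bo(\|\xi\|^{m+1})$ because the symbol of a polynomial sequence is concentrated at the origin, the symbol $\wh\pp(\dm^{-\tp}\xi)\wh y(\dm^{-\tp}\xi)\ol{\wh u(\dm^{-\tp}\xi)}^{\tp}$; requiring $\tz_{u,\dm}\PL_{m,y}=\PL_{m,y}$ then reduces via the working lemma to the stated identity. For (3), the subdivision formula $\wh{\sd_{u,\dm}(\pp*y)}(\xi)=|\det(\dm)|^{1/2}\wh\pp(\dm^{\tp}\xi)\wh y(\dm^{\tp}\xi)\wh u(\xi)$ lands in $(\PL_m)^{1\times r}$ exactly when the shifts of the symbol over nonzero cosets vanish to order $m+1$ at the origin, which is precisely $\wh y(\dm^{\tp}\xi)\wh u(\xi+2\pi\omega)=\bo(\|\xi\|^{m+1})$ for every $\omega\in\Omega_{\dm}\setminus\{0\}$. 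For (4), composing (2) and (3) and demanding that the composition act as the identity on $\PL_{m,y}$ produces the stated coset identity featuring $\td(\omega)$ on the right; the summation over $\omega\in\Omega_{\dm}$ is handled by the standard orthogonality of characters on $\dZ/[\dm\dZ]$.

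The genuine technical difficulty is making rigorous the passage between polynomial sequences (which do not lie in $\dsq$ and possess no ordinary Fourier series) and formal Taylor jets of trigonometric polynomials at the origin. I would resolve this either by replacing each $\pp$ by a sequence of truncated approximations and carefully tracking the remainders, or by working intrinsically with the finite-dimensional moment algebra generated by $\{\partial^{\alpha}\wh y(0):|\alpha|\le m\}$ and showing that the convolution action of $y$ as well as the operators $E_{\dn}$, $\sd_{u,\dm}$, $\tz_{u,\dm}$ all factor through this finite datum. Once this correspondence is established, the four equivalences become systematic bookkeeping with the frequency-domain formulas.
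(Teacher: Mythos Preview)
The paper does not actually give a proof of this statement: immediately before Theorem~\ref{sd:tz:pl} it says ``we refer the reader to \cite{han10} for detailed discussions and proofs of the following result.'' So there is no in-paper argument to compare against; your proposal is effectively filling in what the paper outsources to \cite{han10}.

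Your outline is sound and follows the standard route used in that reference. The fundamental identity $\wh v(\dn^{-\tp}\xi)=\wh{E_{\dn} v}(\xi)\,\ol{\wh{\Vgu_{\dn}}(\xi)}^{\tp}$ is exactly how one identifies $E_{\dn}(\PL_m)$ with a space of the form $\PL_{m,y}$, and your ``working lemma'' (that $\PL_{m,y}=\PL_{m,\tilde y}$ iff the Taylor jets of $\wh y$ and $\wh{\tilde y}$ at $0$ agree to order $m$ up to a nonvanishing scalar factor) is the correct engine for all four parts. One small point of care in item~(1): $\Vgu_{\dn}$ itself is not in $\dlrs{0}{1}{r}$ since $\dn^{-1}\ka{j}\notin\dZ$, so when you write ``$E_{\dn}(\PL_m)=\PL_{m,\Vgu_{\dn}}$'' you should pass through any $y\in\dlrs{0}{1}{r}$ with $\wh y(\xi)=\wh{\Vgu_{\dn}}(\xi)+\bo(\|\xi\|^{m+1})$ before invoking the working lemma. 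The technical issue you flag---that polynomial sequences have no honest Fourier series---is genuine, and your second proposed resolution (work intrinsically with the finite moment data $\{\partial^{\alpha}\wh y(0):|\alpha|\le m\}$ and note that $\pp*y$, $E_{\dn}$, $\sd_{u,\dm}$, $\tz_{u,\dm}$ all factor through it) is the clean way to do it and is how \cite{han10} proceeds.
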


The following result is known (see \cite[Proposition~3.1, Theorem 4.1]{han10}), which characterizes the balancing property of a discrete multiframelet transform in any dimension.

\begin{theorem}\label{thm:bp}
	Let $\dm$ be a $d \times d$ dilation matrix and $r\ge 2$ be a positive integer.
	Let $a\in \dlrs{0}{r}{r}$ and $b\in \dlrs{0}{s}{r}$ for some $s\in\N$. Let $\dn$ be a $d\times d$ integer matrix with $|\det(\dn)|=r$ and $E_{\dn}$ in \eqref{vec:con}.
Define $\wh{\Vgu_{\dn}}$ as in \er{vgu:special}. Then the following statements hold:
	\begin{enumerate}
		
		\item[(1)] The filter $b$ has order $m$ $E_{\dn}$-balanced vanishing moments satisfying \eqref{bp:highpass} if and only if
\be \label{cond:bvmo} \wh{\Vgu_{\dn}}(\xi)\ol{\wh{b}(\xi)}^\tp
=\bo(\|\xi\|^m),\qquad \xi \to 0.
\ee
		\item[(2)] The filter bank $\{a;b\}$ is order $m$ $E_{\dn}$-balanced satisfying both \eqref{bp:lowpass} and \eqref{bp:highpass} if and only if \eqref{cond:bvmo} holds and
\be \label{cond:ao}		 \wh{\Vgu_{\dn}}(\xi)\ol{\wh{a}(\xi)}^\tp
=	 \wh{c}(\xi)\wh{\Vgu_{\dn}}(\dm^{\tp}\xi)
+\bo(\|\xi\|^m),\quad \xi\to 0\quad\text{for some}\; c\in \dlp{0}\; \text{with}\; \wh{c}(0)\ne 0.
\ee
	\end{enumerate}
\end{theorem}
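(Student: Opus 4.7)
My plan is to deduce both items of Theorem~\ref{thm:bp} from Theorem~\ref{sd:tz:pl} by first identifying $E_{\dn}(\PL_{m-1})$ with a shift-invariant polynomial space of the form $\PL_{m-1,y}$. Applying Theorem~\ref{sd:tz:pl}(1) with $m$ replaced by $m-1$, the trivial choice $y=\Vgu_{\dn}$ and $c=\td$ makes the criterion $\wh{y}(\xi)=\wh{c}(\xi)\wh{\Vgu_{\dn}}(\xi)+\bo(\|\xi\|^m)$ immediate, so
\[
  E_{\dn}(\PL_{m-1})=\PL_{m-1,\Vgu_{\dn}}.
\]
This reduction eliminates $E_{\dn}$ from the picture: \eqref{bp:lowpass} and \eqref{bp:highpass} become, respectively, the invariance of $\tz_{a,\dm}$ on, and the vanishing of $\tz_{b,\dm}$ on, the polynomial space $\PL_{m-1,\Vgu_{\dn}}$.

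For item (1), I would compute $\tz_{b,\dm}(\pp*\Vgu_{\dn})$ directly from the definition: it equals $|\det(\dm)|^{1/2}(\pp*q)(\dm\cdot)$, where $q:=\Vgu_{\dn}*b^\star\in\dlrs{0}{1}{s}$. The row sequence $\pp*q$ is polynomial in $k\in\dZ$, and since $\dm$ is a dilation matrix, the lattice $\dm\dZ$ is Zariski-dense in $\dR$, so $(\pp*q)|_{\dm\dZ}=0$ forces $\pp*q\equiv 0$. Requiring this for every $\pp\in\PL_{m-1}$ is in turn equivalent to all moments of $q$ of order up to $m-1$ being zero, i.e., to the Fourier condition $\wh{q}(\xi)=\wh{\Vgu_{\dn}}(\xi)\ol{\wh{b}(\xi)}^\tp=\bo(\|\xi\|^m)$ as $\xi\to 0$. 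This is precisely \eqref{cond:bvmo}, and reversing the chain yields sufficiency.

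For item (2), item (1) already characterizes \eqref{bp:highpass}, so only \eqref{bp:lowpass} needs separate attention. Under the identification above, \eqref{bp:lowpass} becomes $\tz_{a,\dm}\PL_{m-1,\Vgu_{\dn}}\subseteq \PL_{m-1,\Vgu_{\dn}}$, and Theorem~\ref{sd:tz:pl}(2) applied with $u=a$, $y=\Vgu_{\dn}$, and parameter $m-1$ characterizes this invariance by the existence of $c\in\dlp{0}$ with $\wh{c}(0)\ne 0$ satisfying \eqref{cond:ao}. Combining this with item (1) yields item (2).

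The chief technicality, which I expect to be the main obstacle, is the mismatch between inclusion and equality: Theorem~\ref{sd:tz:pl}(2) is phrased as the equality $\tz_{a,\dm}\PL_{m-1,y}=\PL_{m-1,y}$, whereas \eqref{bp:lowpass} is merely an inclusion. The gap turns out to be harmless because the normalization $\wh{c}(0)\ne 0$ in \eqref{cond:ao}, combined with $\wh{\Vgu_{\dn}}(0)=(1,\dots,1)\ne 0$, forces the induced linear endomorphism on the finite-dimensional space $\PL_{m-1,\Vgu_{\dn}}$ to be bijective by a degree-by-degree leading-coefficient argument, so inclusion and equality coincide here. Aside from this point and the routine bookkeeping of the index shift between Theorem~\ref{sd:tz:pl} (which uses $\PL_m$ and $\bo(\|\xi\|^{m+1})$) and Theorem~\ref{thm:bp} (which uses $\PL_{m-1}$ and $\bo(\|\xi\|^m)$), the argument is a direct compilation of the tools already assembled.
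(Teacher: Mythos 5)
The paper does not actually prove Theorem~\ref{thm:bp}: it is quoted as known from \cite[Proposition~3.1, Theorem 4.1]{han10}, so there is no in-paper proof to compare against. Your route --- identifying $E_{\dn}(\PL_{m-1})$ with a space $\PL_{m-1,y}$ and then invoking Theorem~\ref{sd:tz:pl} --- is the natural one and is essentially how the cited source proceeds. Item (1) is correct as you argue it, modulo one repair: $\Vgu_{\dn}$ is not an element of $\dlrs{0}{1}{r}$ (the frequencies $\dn^{-1}\ka{j}$ are not integers, so $\wh{\Vgu_{\dn}}$ is not a $2\pi\dZ$-periodic trigonometric polynomial and there is no underlying finitely supported sequence), so "$y=\Vgu_{\dn}$" and "$q=\Vgu_{\dn}*b^\star$" are not literally meaningful. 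You must fix some $y\in\dlrs{0}{1}{r}$ with $\wh{y}(\xi)=\wh{\Vgu_{\dn}}(\xi)+\bo(\|\xi\|^{m})$ and work with $\PL_{m-1,y}$; since a polynomial of degree less than $m$ convolved against a filter sees only the moments of that filter of order less than $m$, nothing else in your computation changes.

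The genuine gap is in the necessity direction of item (2). Your resolution of the inclusion-versus-equality mismatch is circular: you use $\wh{c}(0)\ne 0$ to conclude that the induced endomorphism of $\PL_{m-1,y}$ is bijective (hence that the inclusion \eqref{bp:lowpass} upgrades to the equality $\tz_{a,\dm}\PL_{m-1,y}=\PL_{m-1,y}$ demanded by Theorem~\ref{sd:tz:pl}(2)), but $\wh{c}(0)\ne 0$ is part of the conclusion \eqref{cond:ao} you are trying to establish. The issue is not cosmetic: taking $\pp=1$ in \eqref{bp:lowpass} only yields
\[
\wh{\Vgu_{\dn}}(0)\,\ol{\wh{a}(0)}^{\tp}=c_0\,\wh{\Vgu_{\dn}}(0)
\quad\text{for \emph{some} } c_0\in\C,
\]
and if $c_0=0$ (e.g.\ any $a$ with $(1,\dots,1)\ol{\wh{a}(0)}^{\tp}=0$, or the extreme case $a=0$) the inclusion \eqref{bp:lowpass} holds while \eqref{cond:ao} fails already at $\xi=0$. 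To close the argument you must either read \eqref{bp:lowpass} as the equality $\tz_{a,\dm}E_{\dn}(\PL_{m-1})=E_{\dn}(\PL_{m-1})$ --- which is how ``invariant on'' functions in \cite{han10}, where the perfect-reconstruction identity $\sd_{\tilde a,\dm}\tz_{a,\dm}=\mathrm{id}$ on these polynomial spaces forces the restriction of $\tz_{a,\dm}$ to be injective, hence bijective --- or impose a non-degeneracy hypothesis such as $\wh{\Vgu_{\dn}}(0)\ol{\wh{a}(0)}^{\tp}\ne 0$ before running the leading-coefficient argument. The sufficiency direction of item (2), i.e.\ \eqref{cond:ao} implies equality implies \eqref{bp:lowpass}, goes through exactly as you describe.
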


\section{Proof of Theorem~\ref{thm:normalform} on Normal Form of a Matrix-valued Filter}\label{sec:normalform}

In this section, we shall develop a general normal form of a multivariate matrix filter by proving
Theorem~\ref{thm:normalform}.
Let us first comment on the importance of the normal form theory. First consider the simplest case $d=1$. If a filter $\mra$ takes the standard $(m,n)$-normal form (see item (ii) of Theorem~\ref{thm:normalform}), then \er{mra:11} yields $(1+e^{-i\xi}+\dots+e^{-i(\dm-1)\xi})^m\mid \wh{\mra_{1,1}}(\xi)$ and \er{mra:12} yields $(1-e^{-i\dm\xi})^m\mid \wh{\mra_{1,2}}(\xi)$. So we can factorize $\mra$ into
\[
\wh{\mra}(\xi)=\wh{\Delta_m}(\dm \xi) A(\xi) \wh{\Delta_m}(\xi)^{-1}
\quad \mbox{with} \quad
\wh{\Delta_m}(\xi):=\left[ \begin{matrix} (1-e^{-i\xi})^m &\\
&I_{r-1}\end{matrix}\right]
\]
for a unique matrix $A(\xi)$ of $2\pi$-periodic trigonometric polynomials.
The above factorization technique of taking out the special factor $\wh{\Delta_m}(\xi)$ is the key for constructing univariate multiframelets with high vanishing moments.

However, for $d\ge 2$, there are no corresponding factors for $(1+e^{-i\xi}+\dots+e^{-i(\dm-1)\xi})^m$ and $\wh{\Delta_m}(\xi)$.
But with a bit more effort, the normal form theory allows us to theoretically study and construct multivariate multiwavelets or multiframelets with high vanishing moments from refinable vector functions, in almost the same way as the scalar case (i.e., $r=1$). The new normal form of a matrix-valued filter plays a key role in our study of balanced quasi-tight multiframelets.

To prove Theorem~\ref{thm:normalform}, several auxiliary results are needed. We start with the following result, which is a straightforward generalization of \cite[Lemma 2.3]{han10}.
\begin{lemma}\label{lem:U}
	Let $\wh{v}=(\wh{v_1},\ldots,\wh{v_r})$ and $\wh{u}=(\wh{u_1},\ldots,\wh{u_r})$ be $1\times r$ vectors of functions which are infinitely differentiable at $0$ with $\wh{v}(0)\ne 0$ and $\wh{u}(0)\ne 0$. If $r\ge 2$, then for any positive integer $n\in \N$, there exists a strongly invertible $r\times r$ matrix $\wh{U}$ of $2\pi\dZ$-periodic trigonometric polynomials such that
	 $\wh{u}(\xi)=\wh{v}(\xi)\wh{U}(\xi)+\bo(\|\xi\|^n)$ as $\xi\to 0$.
\end{lemma}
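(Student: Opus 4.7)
The plan is to induct on $n$, building $\wh{U}$ as a product of manifestly strongly invertible elementary shears. The key building block is that for any $i \ne j$ and any trigonometric polynomial $c(\xi)$, the shear $I + c(\xi) E_{ij}$ (with $E_{ij}$ the single-entry matrix) is strongly invertible with inverse $I - c(\xi) E_{ij}$, because $E_{ij}^2 = 0$; hence products of such shears remain strongly invertible. Before starting the induction, I would make a preliminary reduction: since $r \ge 2$ and $\wh{u}(0) \ne 0$, I can choose a constant $Q \in \mathrm{GL}_r(\C)$ so that $\wh{u}(0) Q^{-1}$ has at least two (in fact, all) nonzero entries. Replacing $\wh{u}$ by $\wh{u} Q^{-1}$ and right-multiplying the final answer by $Q$ reduces us to the case where $\wh{u}(0)$ has this extra freedom. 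The base case $n = 1$ is then a constant matrix problem $\wh{v}(0) \wh{U}_1 = \wh{u}(0)$, solved by completing $\wh{v}(0)$ and $\wh{u}(0)$ to matrices $V_0, U_0 \in \mathrm{GL}_r(\C)$ having those as their first rows and taking $\wh{U}_1 := V_0^{-1} U_0$.

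For the inductive step, given a strongly invertible $\wh{U}_n$ with $\wh{v}\wh{U}_n \equiv \wh{u} \pmod{\|\xi\|^n}$, I would expand the residual as $R := \wh{u} - \wh{v}\wh{U}_n \equiv \sum_{|\alpha|=n} c_\alpha \xi^\alpha \pmod{\|\xi\|^{n+1}}$ with $c_\alpha \in \C^{1 \times r}$, and seek a correction $\wh{V}$ so that $\wh{U}_{n+1} := \wh{U}_n \wh{V}$ kills this residual mod $\|\xi\|^{n+1}$. The key linear algebra step: for each $\alpha$, construct a constant matrix $W_\alpha \in \C^{r \times r}$ with \emph{zero diagonal} solving $\wh{u}(0) W_\alpha = c_\alpha$. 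Each coordinate equation $\sum_{i \ne j} u_i (W_\alpha)_{ij} = (c_\alpha)_j$ is solvable because, thanks to the preliminary reduction, some $u_i$ with $i \ne j$ is nonzero. Assembling $\wh{W}(\xi) := \sum_{|\alpha|=n} W_\alpha q_\alpha(\xi)$, where $q_\alpha$ is a trigonometric polynomial matching $\xi^\alpha$ modulo $\|\xi\|^{n+1}$ (for instance a normalized product of $(1 - e^{-i\xi_j})$ factors), yields a matrix $\wh{W}$ with zero diagonal, $\wh{W} = \bo(\|\xi\|^n)$, and $\wh{u}(0)\wh{W} \equiv R \pmod{\|\xi\|^{n+1}}$. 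I then define $\wh{V} := \prod_{i \ne j}\bigl(I + w_{ij} E_{ij}\bigr)$ in any fixed order, where $w_{ij} = (\wh{W})_{ij}$; this is strongly invertible. Since each $w_{ij} = \bo(\|\xi\|^n)$, all cross terms in the expansion are $\bo(\|\xi\|^{2n}) \subseteq \bo(\|\xi\|^{n+1})$, so $\wh{V} \equiv I + \wh{W} \pmod{\|\xi\|^{n+1}}$, and a routine computation using $\wh{u} - \wh{u}(0) = \bo(\|\xi\|)$ and $R \wh{W} = \bo(\|\xi\|^{2n})$ yields $\wh{v}\wh{U}_{n+1} \equiv \wh{u} - R + \wh{u}(0) \wh{W} \equiv \wh{u} \pmod{\|\xi\|^{n+1}}$, closing the induction.

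The main obstacle will be the tension between achieving the Taylor-matching correction and preserving strong invertibility: a naive $\wh{V} = I + \wh{W}$ with arbitrary $\wh{W} = \bo(\|\xi\|^n)$ gives $\det \wh{V} = 1 + \bo(\|\xi\|^n)$, which is typically not a unit (i.e., a non-trivial monomial) in the Laurent polynomial ring and hence not strongly invertible. The two-pronged remedy---restricting $\wh{W}$ to have zero diagonal, which is exactly where the preliminary base change and the hypothesis $r \ge 2$ are essential (without them, solvability of $\wh{u}(0) W_\alpha = c_\alpha$ with zero-diagonal $W_\alpha$ can fail), and then replacing $I + \wh{W}$ by a literal product of nilpotent elementary shears (each with an obvious trigonometric polynomial inverse)---sidesteps the Laurent polynomial unit obstruction exactly, with the discrepancy between the product and $I + \wh{W}$ absorbed into the $\bo(\|\xi\|^{n+1})$ error allowed at the next stage of the induction.
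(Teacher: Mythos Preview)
Your argument is correct. The paper does not actually prove this lemma; it merely states that it is ``a straightforward generalization of \cite[Lemma~2.3]{han10}'' and moves on, so there is no in-paper proof to compare against line by line.

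That said, your approach---building $\wh{U}$ as a product of off-diagonal elementary shears $I+w_{ij}E_{ij}$, each strongly invertible with inverse $I-w_{ij}E_{ij}$, and killing the Taylor residual one degree at a time---is exactly the kind of argument one expects for this statement and is essentially how the cited result in \cite{han10} is proved. The two nontrivial points are handled correctly: (i) the solvability of $\wh{u}(0)W_\alpha=c_\alpha$ with zero-diagonal $W_\alpha$ is precisely where both $r\ge 2$ and the preliminary base change making all entries of $\wh{u}(0)$ nonzero are used, and (ii) replacing $I+\wh{W}$ by the literal product of shears---which differs from $I+\wh{W}$ only by $\bo(\|\xi\|^{2n})$ cross terms---is the right move, since a naive $I+\wh{W}$ has determinant $1+\bo(\|\xi\|^n)$ and is generally not a unit in the Laurent polynomial ring. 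Your final residual computation, using $\wh{u}\wh{W}=\wh{u}(0)\wh{W}+\bo(\|\xi\|^{n+1})$ and $R\wh{W}=\bo(\|\xi\|^{2n})$, is clean and closes the induction.
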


Next, we establish the following lemma on the moment conditions for vectors of smooth functions.

\begin{lemma}\label{lem:moment}
	Let $m\in \N$.
	Let $\wh{v}$ be a $1\times r$ row vector and $\wh{u}$ be an $r\times 1$ column vector such that all the entries of
	$\wh{v}$ and $\wh{u}$ are functions which are infinitely differentiable at the origin such that
	\be \label{vu=1:m}
	 \wh{v}(\xi)\wh{u}(\xi)=1+\bo(\|\xi\|^m),\quad \xi \to 0.
	\ee
	Then for any positive integer $n$, there exists an $1\times r$ vector $\wh{\mathring{v}}$ of $2\pi\dZ$-periodic trigonometric polynomials such that
	\be \label{vu=1:general}
	 \wh{\mathring{v}}(\xi)=\wh{v}(\xi)+\bo(\|\xi\|^m)
	\quad \mbox{and}\quad
	 \wh{\mathring{v}}(\xi)\wh{u}(\xi)=1+\bo(\|\xi\|^n),\quad \xi\to 0.
	\ee
\end{lemma}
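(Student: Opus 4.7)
The plan is to reduce the construction to a formal-power-series problem at the origin and solve it by a single-coordinate correction. Since both conditions in \eqref{vu=1:general} constrain only the Taylor coefficients of $\wh{\mathring{v}}$ of total order less than $n$, and since any polynomial Taylor data at $0$ of total degree less than $n$ can be realized as the Taylor expansion at $0$ of a $2\pi\dZ$-periodic trigonometric polynomial (solve a linear system in the coefficients of $\sum_{k\in F} c_k e^{-ik\cdot\xi}$ for a sufficiently large finite set $F\subset\dZ$, whose coefficient matrix has full row rank by a Vandermonde-type argument), it is enough to build a $1\times r$ vector of $d$-variate polynomials with the required properties and then lift to trigonometric polynomials at the end.

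To do this, I would first choose any $1\times r$ polynomial vector $\wh{v_0}$ whose Taylor expansion at $0$ agrees with $\wh{v}$ through total order $m-1$, obtained by truncating the Taylor series of $\wh{v}$. Then $\wh{v_0}(\xi)=\wh{v}(\xi)+\bo(\|\xi\|^m)$ as $\xi\to 0$, and the residual $E(\xi):=1-\wh{v_0}(\xi)\wh{u}(\xi)$ is $\bo(\|\xi\|^m)$ by \eqref{vu=1:m}. If $n\le m$ we are already done; so assume $n>m$. Since $\wh{v}(0)\wh{u}(0)=1\ne 0$, at least one entry $\wh{u_j}$ of $\wh{u}$ does not vanish at the origin; fix such a $j$. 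Then $\wh{u_j}$ is a local unit in the ring of germs of smooth functions at $0$, so $1/\wh{u_j}$ is smooth near $0$. Let $p(\xi)$ be the Taylor polynomial of $E(\xi)/\wh{u_j}(\xi)$ of total degree less than $n$; note $p=\bo(\|\xi\|^m)$ because $E=\bo(\|\xi\|^m)$.

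Now define $\wh{\mathring{v}}$ as $\wh{v_0}$ with $p(\xi)$ added to its $j$-th coordinate, i.e.\ $\wh{\mathring{v}}(\xi):=\wh{v_0}(\xi)+(0,\ldots,0,p(\xi),0,\ldots,0)$ with $p$ in slot $j$. The first condition in \eqref{vu=1:general} is immediate since $p=\bo(\|\xi\|^m)$; the second follows from $\wh{\mathring{v}}(\xi)\wh{u}(\xi)=1-E(\xi)+p(\xi)\wh{u_j}(\xi)=1+\bo(\|\xi\|^n)$, where the last equality uses that $p\wh{u_j}$ matches $E$ through order $n-1$ by the very construction of $p$. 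Lifting each polynomial entry of the corrected vector to a $2\pi\dZ$-periodic trigonometric polynomial matching its Taylor expansion through order $n-1$ then finishes the proof. There is no serious obstacle beyond bookkeeping; the only delicate point is that the correction must be placed in a coordinate $j$ where $\wh{u_j}(0)\ne 0$, which is guaranteed by $\wh{v}(0)\wh{u}(0)=1$.
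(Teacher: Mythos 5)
Your argument is correct, and it takes a genuinely different (and more elementary) route than the paper. The paper first invokes its Lemma~\ref{lem:U} to produce a strongly invertible trigonometric polynomial matrix $\wh{U}$ normalizing $\wh{u}$ to $(1,0,\ldots,0)^\tp+\bo(\|\xi\|^n)$, transports $\wh{v}$ to $\wh{\breve{v}}=\wh{v}\wh{U}^{-1}$, replaces the first component of $\wh{\breve{v}}$ (which equals $1+\bo(\|\xi\|^m)$) by the constant $1$, and multiplies back by $\wh{U}$. You instead perform the correction in the original coordinates: since $\wh{v}(0)\wh{u}(0)=1$ forces some $\wh{u_j}(0)\ne 0$, the residual $E=1-\wh{v_0}\wh{u}=\bo(\|\xi\|^m)$ can be killed to order $n$ by adding the degree-$<n$ Taylor polynomial of $E/\wh{u_j}$ to the $j$-th slot, using only that $\wh{u_j}$ is a unit in the ring of germs at $0$. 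Both proofs rest on the same nondegeneracy ($\wh{u}(0)\ne 0$) and on the standard fact that any finite jet at the origin is realized by a $2\pi\dZ$-periodic trigonometric polynomial; your Vandermonde-type justification of that fact is fine, and your bookkeeping (the lift matches jets through order $n-1\ge m$, so both conditions in \eqref{vu=1:general} survive) is sound. What the paper's route buys is uniformity with the rest of Section~\ref{sec:normalform}, where Lemma~\ref{lem:U} and strongly invertible changes of basis are needed anyway; what yours buys is self-containment and the avoidance of the matrix construction altogether. Your $r=1$ and $n\le m$ reductions agree with the paper's.
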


\bp The claim trivially holds if $n\le m$, by simply taking $\wh{\mathring{v}}(\xi):=\wh{v}(\xi)+\bo(\|\xi\|^m)$ as $\xi\to 0$.

Consider the case $n>m$. If $r=1$, then $\wh{u}(0)\ne 0$. Take  
$\wh{\mathring{v}}(\xi):=1/\wh{u}(\xi)+\bo(\|\xi\|^n)$, we see that \eqref{vu=1:general} is satisfied. For $r\ge 2$, by Lemma~\ref{lem:U}, there exists a strongly invertible $r\times r$ matrix $\wh{U}$ such that $\wh{\breve{u}}(\xi):=\wh{U}(\xi)\wh{u}(\xi)=(1,0,\ldots,0)^\tp+\bo(\|\xi\|^n)$ as $\xi \to 0$.
We define $\wh{\breve{v}}(\xi)=
(\wh{\breve{v}_1}(\xi),\ldots,\wh{\breve{v}_r}(\xi))
:= \wh{v}(\xi)\wh{U}(\xi)^{-1}$. Then it follows from \eqref{vu=1:m} that
\[
\wh{\breve{v}_1}(\xi)=
\wh{\breve{v}}(\xi)\wh{\breve{u}}(\xi)+\bo(\|\xi\|^n)
=\wh{v}(\xi)\wh{U}(\xi)^{-1} \wh{U}(\xi) \wh{u}(\xi)+\bo(\|\xi\|^n)
=\wh{v}(\xi)\wh{u}(\xi)=1+\bo(\|\xi\|^m), \quad \xi \to 0.
\]
Let $\wh{\mrv}$ be an $1\times r$ vector of $2\pi\dZ$-periodic trigonometric polynomials such that
$$\wh{\mathring{v}}(\xi)=(1, \wh{\breve{v}_2}(\xi),\ldots, \wh{\breve{v}_r}(\xi))\wh{U}(\xi)+\bo(\|\xi\|^n),\qquad\xi\to 0.$$
Then
\[
\wh{\mathring{v}}(\xi) \wh{u}(\xi)=
(1,\wh{\breve{v}_2}(\xi),\ldots, \wh{\breve{v}_r}(\xi)) \wh{U}(\xi) \wh{u}(\xi)+\bo(\|\xi\|^n)
=(1,\wh{\breve{v}_2}(\xi),\ldots, \wh{\breve{v}_r}(\xi)) \wh{\breve{u}}(\xi)+\bo(\|\xi\|^n)
=1+\bo(\|\xi\|^n)
\]
as $\xi \to 0$. By $\wh{\breve{v}_1}(\xi)=1+\bo(\|\xi\|^m)$ as $\xi \to 0$, we deduce that
\[
\wh{\mathring{v}}(\xi)=\wh{\breve{v}}(\xi)\wh{U}(\xi)+\bo(\|\xi\|^m)=\wh{v}(\xi)+\bo(\|\xi\|^m), \quad \xi\to 0.
\]
This completes the proof.
\ep

To prove Theorem~\ref{thm:normalform},
we also need the following result
linking a refinable vector function $\phi$ with the matching filter $\vgu$ for the associated matrix-valued filter of $\phi$.

\begin{lemma}\label{lem:vguphi}
	Let $\dm$ be a dilation matrix and $a\in \dlrs{0}{r}{r}$.
	Let $\phi$ be an $r\times 1$ vector of
	compactly supported distributions satisfying $\wh{\phi}(\dm^{\tp} \xi)=\wh{a}(\xi)\wh{\phi}(\xi)$ with $\wh{\phi}(0)\ne 0$.
	If $a$ has order $m$ sum rules with respect to $\dm$ satisfying \eqref{sr} with a matching filter $\vgu\in \dlrs{0}{1}{r}$ and $\wh{\vgu}(0)\wh{\phi}(0)=1$, then
	\be \label{vguphi=1:m}
	 \wh{\vgu}(\xi)\wh{\phi}(\xi)=1+\bo(\|\xi\|^m),\quad
	\xi\to 0.
	\ee
\end{lemma}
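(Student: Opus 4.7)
The plan is to reduce the claim to a simple functional equation on the scalar function $F(\xi):=\wh{\vgu}(\xi)\wh{\phi}(\xi)-1$ and then to argue by induction on the order of the leading Taylor term.

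First, I would take $\omega=0$ in the sum rule hypothesis \eqref{sr}, which gives
\[
\wh{\vgu}(\dm^{\tp}\xi)\wh{a}(\xi)=\wh{\vgu}(\xi)+\bo(\|\xi\|^m),\qquad \xi\to 0.
\]
Multiplying on the right by $\wh{\phi}(\xi)$ and invoking the refinement relation $\wh{a}(\xi)\wh{\phi}(\xi)=\wh{\phi}(\dm^{\tp}\xi)$ yields
\[
\wh{\vgu}(\dm^{\tp}\xi)\wh{\phi}(\dm^{\tp}\xi)=\wh{\vgu}(\xi)\wh{\phi}(\xi)+\bo(\|\xi\|^m),\qquad \xi\to 0.
\]
Since $\phi$ consists of compactly supported distributions, the Paley--Wiener theorem guarantees that $\wh{\phi}$ is entire; combined with the fact that $\wh{\vgu}$ is a trigonometric polynomial, we may Taylor-expand $F(\xi):=\wh{\vgu}(\xi)\wh{\phi}(\xi)-1$ freely near the origin. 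The assumption $\wh{\vgu}(0)\wh{\phi}(0)=1$ gives $F(0)=0$, and the displayed identity reads $F(\dm^{\tp}\xi)=F(\xi)+\bo(\|\xi\|^m)$.

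Next I would prove by induction on $k=0,1,\ldots,m-1$ that $F(\xi)=\bo(\|\xi\|^{k+1})$. The base case is $F(0)=0$. For the inductive step, suppose $F(\xi)=\bo(\|\xi\|^{k})$ with $k<m$, and write $F(\xi)=P_k(\xi)+\bo(\|\xi\|^{k+1})$, where $P_k$ is the homogeneous polynomial of degree $k$ appearing in the Taylor expansion. Substituting into $F(\dm^{\tp}\xi)=F(\xi)+\bo(\|\xi\|^m)$ and matching terms of degree exactly $k$ (using $k+1\le m$) forces
\[
P_k(\dm^{\tp}\xi)=P_k(\xi),\qquad \xi\in\dR.
\]

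The main obstacle, which is actually mild, is to show from this dilation invariance that $P_k\equiv 0$. Because $\dm$ is a dilation matrix, every eigenvalue of $(\dm^{\tp})^{-1}$ has modulus strictly less than one, so $(\dm^{\tp})^{-j}\xi\to 0$ as $j\to\infty$ for every $\xi\in\dR$. Iterating the invariance gives $P_k(\xi)=P_k((\dm^{\tp})^{-j}\xi)$ for all $j\ge 1$; letting $j\to\infty$ and using continuity together with $P_k(0)=0$ forces $P_k\equiv 0$. Hence $F(\xi)=\bo(\|\xi\|^{k+1})$, which completes the induction and yields \eqref{vguphi=1:m}.
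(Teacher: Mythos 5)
Your proposal is correct and follows essentially the same route as the paper: both derive the functional equation $\wh{\vgu}(\dm^{\tp}\xi)\wh{\phi}(\dm^{\tp}\xi)=\wh{\vgu}(\xi)\wh{\phi}(\xi)+\bo(\|\xi\|^m)$ from the sum rule with $\omega=0$ and the refinement relation, Taylor-expand, and conclude that each homogeneous component of positive degree below $m$ is $\dm^{\tp}$-invariant and hence zero because the eigenvalues of $\dm$ exceed one in modulus. The paper matches all homogeneous degrees at once rather than inducting on the leading term, and cites \cite[Proposition~2.1]{han03} for the vanishing step where you give the explicit iteration $(\dm^{\tp})^{-j}\xi\to 0$, but these are only presentational differences.
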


\bp This is a special case of \cite[Proposition~3.2]{han03} and we give a proof here. By our assumption on $a$, using $\wh{\vgu}(\dm^{\tp}\xi ) \wh{a}(\xi)=\wh{\vgu}(\xi)+\bo(\|\xi\|^m)$ as $\xi\to 0$ and $\wh{\phi}(\dm^{\tp}\xi)=\wh{a}(\xi)\wh{\phi}(\xi)$, we deduce that
\be\label{rel:vguphi}
\wh{\vgu}(\dm^{\tp} \xi)\wh{\phi}(\dm^{\tp}\xi)
=\wh{\vgu}(\dm^{\tp} \xi) \wh{a}(\xi)\wh{\phi}(\xi)=
\wh{\vgu}(\xi)\wh{\phi}(\xi)+\bo(\|\xi\|^m), \quad \xi\to 0.
\ee
Define $g(\xi):=\wh{\vgu}(\xi)\wh{\phi}(\xi)$. Then its Taylor polynomial approximation is $g(\xi)=\sum_{j=0}^{m-1} g_j(\xi)+\bo(\|\xi\|^m)$ as $\xi \to 0$, where $g_j(\xi):=\sum_{\mu\in \dNN, |\mu|=j} \frac{\partial^\mu g(0)}{\mu!} \xi^\mu$. Then \eqref{rel:vguphi} is equivalent to $g(0)=1$ and
\[
g_j(\dm^\tp \xi)=g_j(\xi)\qquad \forall\, \xi\in \dR, 1\le j<m.
\]
Since all the eigenvalues of $\dm$ are greater than $1$ in modulus, we can deduce (e.g., see \cite[Proposition~2.1]{han03}) that the above identities force $g_j=0$ for all $1\le j<m$. Hence, \eqref{vguphi=1:m} must hold.
\ep

We now prove the following theorem, which generalizes all results on the standard normal form of a matrix-valued filter in \cite{han03,han09,han10,hanbook,hl19pp,hm03} but under much weaker conditions.

\begin{theorem}\label{thm:normalform:gen}
	Let $\dm$ be a $d\times d$ dilation matrix and $a\in \dlrs{0}{r}{r}$ be a matrix-valued filter.
	Let $\phi$ be an $r\times 1$ vector of
	compactly supported distributions satisfying $\wh{\phi}(\dm^{\tp} \xi)=\wh{a}(\xi)\wh{\phi}(\xi)$ with $\wh{\phi}(0)\ne 0$.
	Suppose that the filter $a$ has order $m$ sum rules with respect to $\dm$ satisfying \eqref{sr} with a matching filter $\vgu\in \dlrs{0}{1}{r}$ such that $\wh{\vgu}(0)\wh{\phi}(0)=1$.
	Let $\wh{\mathring{\vgu}}$ be a $1\times r$ row vector and $\wh{u_\phi}$ be an $r\times 1$ column vector
	such that all the entries of
	$\wh{\mathring{\vgu}}$ and $\wh{u_\phi}$ are functions which are infinitely differentiable at $0$ and
	\be \label{vguphi=1:new}
	 \wh{\mathring{\vgu}}(\xi)\wh{u_\phi}(\xi)=1+\bo(\|\xi\|^m), \qquad \xi \to 0.
	\ee
	If $r\ge 2$, then for any positive integer $n\in \N$, there exists a strongly invertible $r\times r$ matrix $\wh{U}$ of $2\pi\dZ$-periodic trigonometric polynomials such that
	\be \label{normalform:general}
	\wh{\vgu}(\xi) \wh{U}(\xi)^{-1}=\wh{\mathring{\vgu}}(\xi)+\bo(\|\xi\|^m)
	\quad \mbox{and}\quad
	\wh{U}(\xi) \wh{\phi}(\xi)=
	 \wh{u_\phi}(\xi)+\bo(\|\xi\|^n),\qquad \xi\to 0.
	\ee
	Define
	 $\wh{\mathring{\phi}}(\xi):=\wh{U}(\xi) \wh{\phi}(\xi)$
	and $\wh{\mathring{a}}(\xi):=\wh{U}(\dm^{\tp}\xi)\wh{a}(\xi) \wh{U}(\xi)^{-1}$.
	Then the following statements hold:
	\begin{enumerate}
		\item[(i)] The new vector function $\mathring{\phi}$ is a vector of compactly supported distributions satisfying $\wh{\mathring{\phi}}(\dm^{\tp} \xi)=\wh{\mathring{a}}(\xi)\wh{\mathring{\phi}}(\xi)$ for all $\xi\in \R$
		and $\wh{\mathring{\phi}}(\xi)=\wh{u_\phi}(\xi)+\bo(\|\xi\|^n)$ as $\xi \to 0$.

		\item[(ii)] The new finitely supported filter $\mathring{a}$ has order $m$ sum rules with respect to $\dm$ with the matching filter $\mathring{\vgu}$
		satisfying $\wh{\mathring{\vgu}}(0)\wh{\mathring{\phi}}(0)=1$
and \eqref{sr} with $a$ and $\vgu$ being replaced by $\mathring{a}$ and $\mathring{\vgu}$, respectively.
	\end{enumerate}
\end{theorem}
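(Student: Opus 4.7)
Lemma~\ref{lem:vguphi} combined with the sum-rule assumption yields $\wh{\vgu}(\xi)\wh{\phi}(\xi)=1+\bo(\|\xi\|^m)$, which matches the hypothesis $\wh{\mathring{\vgu}}(\xi)\wh{u_\phi}(\xi)=1+\bo(\|\xi\|^m)$ and supplies the compatibility needed at order $m$. Everything in the theorem follows once one produces a strongly invertible $r\times r$ matrix $\wh{U}$ of $2\pi\dZ$-periodic trigonometric polynomials satisfying
\begin{equation}\label{eq:twoconds}
\wh{U}(\xi)\wh{\phi}(\xi)=\wh{u_\phi}(\xi)+\bo(\|\xi\|^n)
\quad\text{and}\quad
\wh{\vgu}(\xi)\wh{U}(\xi)^{-1}=\wh{\mathring{\vgu}}(\xi)+\bo(\|\xi\|^m),
\end{equation}
which is \eqref{normalform:general}. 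Item (i) is then definitional: the refinement identity $\wh{\mathring{\phi}}(\dm^\tp\xi)=\wh{\mathring{a}}(\xi)\wh{\mathring{\phi}}(\xi)$ comes from inserting $\wh{U}(\xi)^{-1}\wh{U}(\xi)$ into $\wh{U}(\dm^\tp\xi)\wh{a}(\xi)\wh{\phi}(\xi)$, and the Taylor part is \eqref{eq:twoconds}. For item (ii), any $\omega\in\Omega_\dm$ satisfies $\dm^\tp\omega\in\dZ$, so $2\pi\dZ$-periodicity of $\wh{U}$ gives $\wh{U}(\dm^\tp(\xi+2\pi\omega))=\wh{U}(\dm^\tp\xi)$; substituting into $\wh{\mathring{a}}(\xi+2\pi\omega)=\wh{U}(\dm^\tp\xi)\wh{a}(\xi+2\pi\omega)\wh{U}(\xi+2\pi\omega)^{-1}$ and using the sum rule of $a$ shows that $\mathring{a}$ has order $m$ sum rules with matching filter $\mathring{\vgu}$, while $\wh{\mathring{\vgu}}(0)\wh{\mathring{\phi}}(0)=\wh{\vgu}(0)\wh{\phi}(0)=1$ is automatic.

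\textbf{Factorization $\wh{U}=\wh{W}\wh{U_1}$.}
Apply Lemma~\ref{lem:U} in its column-vector form (obtained by transposing the statement and noting that $\wh{V}$ is strongly invertible iff $\wh{V}^\tp$ is) to the pair $(\wh{\phi},\wh{u_\phi})$ to produce a strongly invertible $\wh{U_1}$ with $\wh{U_1}(\xi)\wh{\phi}(\xi)=\wh{u_\phi}(\xi)+\bo(\|\xi\|^n)$. Setting $\wh{\vgu_1}(\xi):=\wh{\vgu}(\xi)\wh{U_1}(\xi)^{-1}$, a short calculation using $\wh{\vgu}\wh{\phi}=1+\bo(\|\xi\|^m)$ gives $\wh{\vgu_1}(\xi)\wh{u_\phi}(\xi)=1+\bo(\|\xi\|^m)$. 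It then suffices to construct a strongly invertible $\wh{W}$ satisfying
\begin{equation}\label{eq:Wconds}
\wh{W}(\xi)\wh{u_\phi}(\xi)=\wh{u_\phi}(\xi)+\bo(\|\xi\|^n)
\quad\text{and}\quad
\wh{\mathring{\vgu}}(\xi)\wh{W}(\xi)=\wh{\vgu_1}(\xi)+\bo(\|\xi\|^m),
\end{equation}
for then $\wh{U}:=\wh{W}\wh{U_1}$ inherits both identities in \eqref{eq:twoconds}.

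\textbf{A rank-one formal solution for $\wh{W}$.}
Use the ansatz $\wh{W}(\xi)=I_r+\wh{\alpha}(\xi)\wh{\beta}(\xi)$ with an $r\times 1$ column $\wh{\alpha}$ and a $1\times r$ row $\wh{\beta}$. Choose $\wh{\alpha}$ so that $\wh{\mathring{\vgu}}(\xi)\wh{\alpha}(\xi)=1+\bo(\|\xi\|^m)$ and $\wh{\vgu_1}(0)\wh{\alpha}(0)\ne 0$; such an $\wh{\alpha}$ exists because the affine hyperplane $\{x:\wh{\mathring{\vgu}}(0)x=1\}$ is not contained in $\{x:\wh{\vgu_1}(0)x=0\}$ (otherwise $\wh{\vgu_1}(0)$ would be a scalar multiple of $\wh{\mathring{\vgu}}(0)$ with vanishing proportionality constant, contradicting $\wh{\vgu_1}(0)\wh{u_\phi}(0)=1$). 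Set $\wh{\gamma}:=\wh{\vgu_1}-\wh{\mathring{\vgu}}$, so that $\wh{\gamma}(\xi)\wh{u_\phi}(\xi)=\bo(\|\xi\|^m)$. An adaptation of Lemma~\ref{lem:moment}---replacing the normalization constant ``$1$'' by ``$0$''---then yields a row $\wh{\beta}$ with $\wh{\beta}(\xi)=\wh{\gamma}(\xi)+\bo(\|\xi\|^m)$ and $\wh{\beta}(\xi)\wh{u_\phi}(\xi)=\bo(\|\xi\|^n)$. A direct substitution shows that this formal $\wh{W}$ satisfies both identities in \eqref{eq:Wconds}.

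\textbf{The main obstacle: strong invertibility.}
The determinant of $I_r+\wh{\alpha}\wh{\beta}$ equals $1+\wh{\beta}(\xi)\wh{\alpha}(\xi)$, which is $\wh{\vgu_1}(0)\wh{\alpha}(0)\ne 0$ at the origin but is generally not a non-trivial monomial, so the formal $\wh{W}$ constructed above need not be strongly invertible. Since only the Taylor jet of $\wh{W}$ at $0$ up to order $\max(m,n)$ enters the verification of \eqref{eq:Wconds}, the final step is a lifting statement: any $r\times r$ matrix-valued germ at $0$ with invertible constant term can be approximated, to any prescribed order at $0$, by a strongly invertible matrix of $2\pi\dZ$-periodic trigonometric polynomials. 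This is precisely the flexibility built into (and iterated from) Lemma~\ref{lem:U}: one builds such a lift by composing elementary strongly invertible factors---permutations, diagonal matrices with monomial entries $e^{ik\cdot\xi}$, and shears of the form $I+e^{ik\cdot\xi}E_{ij}$---chosen to kill off the mismatched Taylor coefficients one at a time. Performing this lift on the formal $\wh{W}$ yields an honest strongly invertible $\wh{W}$ and hence $\wh{U}=\wh{W}\wh{U_1}$. This final lifting step is the principal technical hurdle; the compatibility $(\wh{\vgu_1}-\wh{\mathring{\vgu}})\wh{u_\phi}=\bo(\|\xi\|^m)$ established in the second paragraph is exactly what makes it feasible.
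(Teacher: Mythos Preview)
Your reduction to the two Taylor conditions \eqref{eq:twoconds} and the verification of items (i)--(ii) are fine, and the factorization $\wh{U}=\wh{W}\wh{U_1}$ with the rank-one ansatz for $\wh{W}$ does produce a \emph{formal} solution of \eqref{eq:Wconds}. The gap is in the last paragraph: the ``lifting statement'' you invoke is \emph{false}. If $\wh{V}$ is a strongly invertible $r\times r$ matrix of trigonometric polynomials then $\det\wh{V}(\xi)=c\,e^{ik\cdot\xi}$ for some $c\ne 0$ and $k\in\dZ$; hence any germ $M$ that can be approximated by such a $\wh{V}$ to order $N$ must satisfy $\det M(\xi)=c\,e^{ik\cdot\xi}+\bo(\|\xi\|^N)$. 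A generic germ with $M(0)$ invertible does not have monomial determinant jet. Concretely, for $r=2$, $d=1$, the germ $M(\xi)=\DG(1+\xi^2,1)$ has $\det M=1+\xi^2$, which is not $c\,e^{ik\xi}+\bo(|\xi|^3)$ for any $c,k$; so no strongly invertible $\wh{V}$ satisfies $\wh{V}=M+\bo(|\xi|^3)$. Your proposed elementary factors (permutations, monomial diagonals, shears $I+e^{ik\cdot\xi}E_{ij}$) all have monomial determinant, so their products cannot reach such a jet either. In particular, the determinant $1+\wh{\beta}\wh{\alpha}$ of your rank-one $\wh{W}$ has no reason to be a monomial jet, and you cannot fix this by replacing $\wh{W}$ with a strongly invertible matrix sharing its full jet.

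What you actually need is much weaker than full-jet lifting: you only need a strongly invertible $\wh{W}$ satisfying the two \emph{vector} constraints in \eqref{eq:Wconds}, and there the determinant is unconstrained. The paper exploits this freedom by first normalizing \emph{both} matching filters to $(1,0,\ldots,0)$ via two applications of Lemma~\ref{lem:U} (your single application of Lemma~\ref{lem:U} handles only the column side). After that reduction, the compatibility $\wh{\vgu}\,\wh{\phi}=1+\bo(\|\xi\|^m)$ and $\wh{\mathring{\vgu}}\,\wh{u_\phi}=1+\bo(\|\xi\|^m)$ force the \emph{first} components of the two column vectors to agree already; only the remaining $r-1$ components need correction, and this is done by an explicit lower-unipotent shear $\wh{U_3}$ with $\det\wh{U_3}\equiv 1$, which is manifestly strongly invertible and leaves the first row (hence the row condition) untouched. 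That is how the paper avoids the determinant obstruction that blocks your lifting argument.
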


\begin{proof}
We first prove \eqref{normalform:general} for $n \ge m$, from which \eqref{normalform:general} for $n<m$ follows trivially.
By Lemma~\ref{lem:vguphi}, we see that \eqref{vguphi=1:m} holds. Note that $\wh{\phi}$ is smooth at every $\xi\in\dR$, which follows from the Paley-Wiener theorem. Thus by \eqref{vguphi=1:new} and Lemma~\ref{lem:moment}, without loss of generality we may assume that
	\be \label{vguphi=1}
	 \wh{\vgu}(\xi)\wh{\phi}(\xi)=1+\bo(\|\xi\|^n) \quad \mbox{and}
	\quad \wh{\mathring{\vgu}}(\xi)\wh{u_\phi}(\xi)=1+\bo(\|\xi\|^n),\qquad \xi \to 0.
	\ee

	Let $\wh{\breve{\vgu}}(\xi)=(1,0,\ldots,0)$. By Lemma~\ref{lem:U}, there exist strongly invertible $r\times r$ matrices $\wh{U_1}$ and $\wh{U_2}$ of $2\pi\dZ$-periodic trigonometric polynomials such that
	\be \label{vgu:breve}
	\wh{\breve{\vgu}}(\xi)=
	 \wh{\mathring{\vgu}}(\xi)\wh{U_1}(\xi)+\bo(\|\xi\|^{n})
	\quad\mbox{and}\quad
	 \wh{\vgu}(\xi)=\wh{\breve{\vgu}}(\xi) \wh{U_2}(\xi)+\bo(\|\xi\|^{n}),\quad
	\xi\to 0.
	\ee
	Define
	\[
	\wh{\breve{u}_\phi}(\xi):=
	\wh{U_1}(\xi)^{-1} \wh{u_\phi}(\xi),
	\qquad
	 \wh{\breve{\phi}}(\xi):=\wh{U_2}(\xi)\wh{\phi}(\xi),
	\qquad\mbox{and}\qquad
	 \wh{\breve{a}}(\xi):=\wh{U_2}(\dm^{\tp}\xi)\wh{a}(\xi) \wh{U_2}(\xi)^{-1}.
	\]
	Then it is obvious that $\wh{\breve{\phi}}(\dm^{\tp} \xi)=\wh{\breve{a}}(\xi)\wh{\breve{\phi}}(\xi)$ and $\breve{a}$ has order $m$ sum rules with the matching filter $\breve{\vgu}$.
	Write $\breve{u}_\phi=(\breve{u}_1,\ldots, \breve{u}_r)^\tp$.
	Using \eqref{vguphi=1} and the fact $\wh{\breve{\vgu}}(\xi)=(1,0,\ldots,0)$,
	we observe that
	\[
	\wh{\breve{u}_1}(\xi)
	=\wh{\breve{\vgu}}(\xi) \wh{\breve{u}_\phi}(\xi)=
	 \wh{\mathring{\vgu}}(\xi)\wh{U_1}(\xi)
	\wh{U_1}(\xi)^{-1} \wh{u_\phi}(\xi)+\bo(\|\xi\|^{n})
	=\wh{\mathring{\vgu}}(\xi) \wh{u_\phi}(\xi)=1+\bo(\|\xi\|^n),\quad \xi\to 0.
	\]
	Write $\breve{\phi}=(\breve{\phi}_1,\ldots, \breve{\phi}_r)^\tp$. By \eqref{vguphi=1}, we have
	\[
	 \wh{\breve{\phi}_1}(\xi)=\wh{\breve{\vgu}}(\xi)\wh{\breve{\phi}}(\xi)=
	\wh{\vgu}(\xi) \wh{U_2}(\xi)^{-1}
	\wh{U_2}(\xi) \wh{\phi}(\xi)=
	 \wh{\vgu}(\xi)\wh{\phi}(\xi)=1+\bo(\|\xi\|^{n}),\quad \xi \to 0.
	\]
	Choose $2\pi\dZ$-periodic trigonometric polynomials $\wh{w_\ell}, \ell=2,\ldots,r$ such that
	\[
 \wh{w_\ell}(\xi)=\wh{\breve{u}_\ell}(\xi)-	 \wh{\breve{\phi}_\ell}(\xi)+\bo(\|\xi\|^{n}),\quad \xi \to 0, \ell=2, \ldots,r.
	\]
	Define
	\[
	\wh{U_3}(\xi):=\left[ \begin{matrix}
	1 &0 &\cdots &0\\
	\wh{w_2}(\xi) &1 &\cdots &0\\
	\vdots &\vdots &\ddots &\vdots\\
	\wh{w_r}(\xi) &0 &\cdots &1\end{matrix}
	\right].
	\]
	It is not hard to see that $\wh{U_3}$ is strongly invertible by noting $\det \wh{U_3}(\xi)=1$ and
	\be \label{breve:phi}
	 \wh{U_3}(\xi)\wh{\breve{\phi}}(\xi)=
	 \wh{\breve{u}_\phi}(\xi)+\bo(\|\xi\|^n),\qquad \xi \to 0.
	\ee

	Let $\wh{U}(\xi):=\wh{U_1}(\xi)\wh{U_3}(\xi) \wh{U_2}(\xi)$. Clearly $\wh{U}$ is strongly invertible. We are left to verify that all the claims  are satisfied. First, using \eqref{vgu:breve} and $n\ge m$, we have
	\begin{align*}
	\wh{\vgu}(\xi)\wh{U}(\xi)^{-1}
	&=\wh{\vgu}(\xi) \wh{U_2}(\xi)^{-1} \wh{U_3}(\xi)^{-1} \wh{U_1}(\xi)^{-1}
	=\wh{\breve{\vgu}}(\xi) \wh{U_3}(\xi)^{-1}\wh{U_1}(\xi)^{-1}+\bo(\|\xi\|^{n})\\
	&=\wh{\breve{\vgu}}(\xi) \wh{U_1}(\xi)^{-1}+\bo(\|\xi\|^{n})
	 =\wh{\mathring{\vgu}}(\xi)+\bo(\|\xi\|^{n})=\wh{\mathring{\vgu}}(\xi)+\bo(\|\xi\|^{m}),
	\end{align*}
	as $\xi \to 0$, where the first equality of the last line follows from the facts that the first row of $\wh{U_3}(\xi)^{-1}$ is $(1,0,\ldots,0)$ and $\wh{\breve{\vgu}}(\xi)=(1,0,\ldots,0)$.
	Similarly, by $\wh{\breve{\phi}}(\xi)=\wh{U_2}(\xi)\wh{\phi}(\xi)$ and using \eqref{breve:phi}, we have
	\[
	\wh{U}(\xi)\wh{\phi}(\xi)=
	\wh{U_1}(\xi)\wh{U_3}(\xi) \wh{U_2}(\xi)\wh{\phi}(\xi)
	=\wh{U_1}(\xi)\wh{U_3}(\xi) \wh{\breve{\phi}}(\xi)
	=\wh{U_1}(\xi) \wh{\breve{u}_\phi}(\xi)+\bo(\|\xi\|^n)
	 =\wh{u_\phi}(\xi)+\bo(\|\xi\|^n),
	\]
	where the last identity follows from $\wh{\breve{u}_\phi}(\xi)=\wh{U_1}(\xi)^{-1}\wh{u_\phi}(\xi)$.
	This proves \eqref{vguphi=1:new}.
	
	Next, we have
	\[
	\wh{\mrphi}(\dm^{\tp} \xi)=\wh{U}(\dm^{\tp} \xi)\wh{\phi}(\dm^{\tp} \xi)
	=\wh{U}(\dm^{\tp}\xi) \wh{a}(\xi)\wh{\phi}(\xi)=
	\wh{\mathring{a}}(\xi) \wh{\mathring{\phi}}(\xi).
	\]
	By \eqref{vguphi=1:new}, we have
	 $\wh{\mathring{\phi}}(\xi)=\wh{U}(\xi)\wh{\phi}(\xi)=
	\wh{u_\phi}(\xi)+\bo(\|\xi\|^n)$ as $\xi \to 0$. This proves item (i).
	
	Since $\wh{U}$ is strongly invertible, we must have $\mathring{a}\in\dlrs{0}{r}{r}$. For $\omega\in\Omega_{\dm}$, we have
\begin{align*}
	\wh{\mathring{\vgu}}(\dm^{\tp} \xi)& \wh{\mathring{a}}(\xi+2\pi\omega)=
	\wh{\vgu}(\dm^{\tp} \xi) \wh{U}(\dm^{\tp} \xi)^{-1}
	\wh{U}(\dm^{\tp}\xi) \wh{a}(\xi+2\pi\omega) \wh{U}(\xi+2\pi\omega)^{-1}\\
	=&\wh{\vgu}(\dm^{\tp}\xi) \wh{a}(\xi+2\pi\omega) \wh{U}(\xi+2\pi\omega)^{-1}=\td(\omega)\wh{\vgu}(\xi)\wh{U}(\xi)^{-1}+\bo(\|\xi\|^m)=\td(\omega)\wh{\mathring{\vgu}}(\xi)+\bo(\|\xi\|^m)
\end{align*}
	as $\xi\to 0$, which proves item (ii).
\end{proof}

We now prove Theorem~\ref{thm:normalform}, which is a special case of Theorem~\ref{thm:normalform:gen}, but with the additional ``almost orthogonality'' in \eqref{eq:ortho}.

\bp[Proof of Theorem~\ref{thm:normalform}]
Choose a strongly invertible $r\times r$ matrix $\wh{U}$ of $2\pi\dZ$-periodic trigonometric polynomials such that all claims of Theorem~\ref{thm:normalform:gen} hold with $\wh{\mathring{\vgu}}(\xi)=
(1,0,\dots,0)$ and $\wh{u_\phi}(\xi)=(1,0,\dots,0)^\tp$. Then we immediately observe that item (1) holds.

Next, we prove item (2). By Theorem~\ref{thm:normalform:gen}, we see that $\wh{\mrphi}(\dm^{\tp}\xi)=\wh{\mra}(\xi)\wh{\mrphi}(\xi)$ and $\mra$ has order $m$ sum rules with respect to $\dm$ with the matching filter $\mathring{\vgu}$. Moreover,
\[
(1,0,\dots,0)\wh{\mra}(\xi+2\pi\omega)=\td(\omega)(1,0,\dots,0)+\bo(\|\xi\|^m),\quad \xi\to 0,\quad\forall \omega\in\Omega_{\dm}.
\]
It follows that
$\wh{\mathring{a}_{1,1}}(\xi+2\pi\omega)=\bo(\|\xi\|^m)$ as $\xi\to 0$ for all  $\omega\in\Omega_{\dm}\setminus\{0\}$, and
$\wh{\mathring{a}_{1,2}}(\xi+2\pi\omega)=\bo(\|\xi\|^m)$ as $\xi\to 0$ for all $\omega\in\Omega_{\dm}$.
This proves the second identity in \er{mra:11} and \er{mra:12}. On the other hand, by \er{normalform:phi} and  $\wh{\mathring{\phi}}(\dm^{\tp}\xi)=\wh{\mathring{a}}(\xi)\wh{\mathring{\phi}}(\xi)$,
it follows immediately that $\wh{\mathring{a}_{1,1}}(\xi)=1+\bo(\|\xi\|^n)$
and $\wh{\mathring{a}_{2,1}}(\xi)=\bo(\|\xi\|^n)$.
This proves the first identity in \er{mra:11} and \er{mra:21}. Hence item (2) is proved.

Finally, we prove item (3).  By Theorem~\ref{thm:normalform:gen}, there is
a strongly invertible $V\in\lrs{0}{r}{r}$ such that
$$\wh{\vgu}(\xi)\wh{V}(\xi)=\wh{\mathring{\vgu}}(\xi)=(1,0,\dots,0)+\bo(\|\xi\|^{m}),\quad \wh{V}(\xi)^{-1}\wh{\phi}(\xi)=\wh{\mathring{\phi}}(\xi)=(1,0,\dots,0)^{\tp}+\bo(\|\xi\|^{\tilde{n}}),\quad \xi\to 0,$$
where $\tilde{n}=\max(m,n)$. It follows from \er{moment:special} and the above identities that
\be\label{moment:special:2}
(1,0,\dots,0)\wh{V}^{-1}(\xi)=
\frac{\ol{\wh{\phi}(\xi)}^{\tp}}{\|\wh{\phi}(\xi)\|^2}
+\bo(\|\xi\|^{m}),\quad \wh{V}(\xi)(1,0,\dots,0)^{\tp}=\wh{\phi}(\xi)+\bo(\|\xi\|^{\tilde{n}}),\quad \xi\to 0.\ee
For $j=1,\dots,r$, denote $\wh{V_j}$ the $j$-th column of $\wh{V}$. By \eqref{moment:special:2} that $\wh{V_1}(\xi)=\wh{\phi}(\xi)+\bo(\|\xi\|^{\tilde{n}})$ as $\xi\to 0$.

Define $\wh{W_1}(\xi):=\wh{V_1}(\xi)$ and choose $w_1\in \lp{0}$ such that $\wh{w_1}(\xi)=\|\wh{\phi}(\xi)\|^{-2}+\bo(\|\xi\|^{\tilde{n}})$ as $\xi\to 0$. For $j=2,\dots,r$, define $W_j\in\lrs{0}{r}{1}$ and choose $w_{j}\in\lp{0}$ recursively via
\[
\wh{W_j}(\xi)=\wh{V_j}(\xi)-\sum_{l=1}^{j-1}\wh{V_j}(\xi)^{\tp}\ol{\wh{W_l}(\xi)}\wh{w}_l(\xi)\wh{W_l}(\xi),
\qquad
\wh{w_{j}}(\xi)=\|\wh{W_{j}}(\xi)\|^{-2}+\bo(\|\xi\|^{\tilde{n}}),\quad\xi\to 0.
\]
Define $W\in\lrs{0}{r}{r}$ via
\[
\wh{W}(\xi):=[\wh{W_1}(\xi),\wh{W_2}(\xi),\dots,\wh{W_r}(\xi)]=[\wh{\phi}(\xi)+\bo(\|\xi\|^{\tilde{n}}),\wh{W_2}(\xi),\dots,\wh{W_r}(\xi)],\quad\xi\to 0.
\]
By construction, we have $\det(\wh{W})=\det(\wh{V})$ and $\wh{W}$ is strongly invertible. For $j=1,\dots,r$, we have
$$
\wh{W_j}(\xi)=\Big(\wh{V_j}(\xi)-
\sum_{l=1}^{j-1}\wh{V_j}(\xi)^{\tp}
\ol{\wh{W_l}(\xi)}
\wh{W_l}(\xi)\|\wh{W_l}(\xi)\|^{-2}\Big)+\bo(\|\xi\|^{\tilde{n}}),\quad\xi\to 0.
$$
Thus
\[
\ol{\wh{W_j}(\xi)}^{\tp}\wh{W_k}(\xi)=\td(j-k)\|\wh{W_j}\|^2+\bo(\|\xi\|^{\tilde{n}}),\quad\xi\to 0.
\]
It follows that
$$\ol{\wh{W}(\xi)}^{\tp}\wh{W}(\xi)=\DG\left(\|\wh{\phi}(\xi)\|^2, \|\wh{W_2}(\xi)\|^2,\dots,\|\wh{W_r}(\xi)\|^2\right)+\bo(\|\xi\|^{\tilde{n}}),\quad\xi\to 0.$$
By letting $\wh{U}:=\wh{W}^{-1}$, we conclude that \er{eq:ortho} holds. Moreover, it is straightforward to check that \er{moment:special:2} holds with $V$ being replaced by $W$. Thus $U$ satisfies items (1) and (2).

Conversely, suppose that $\wh{U}$ is an $r\times r$ strongly invertible matrix of $2\pi\dZ$-trigonometric polynomials satisfying items (1) and (2),  and \er{eq:ortho} holds with $n\ge m$. Note that $\|\wh{\phi}(0)\|^2\neq 0$, which follows from \er{eq:ortho} and the strong invertibility of $\wh{U}$. Using item~(1), \er{eq:ortho} and $n\ge m$, we have
\begin{align*}
\wh{\vgu}(\xi)&=(1,0,\dots,0)\wh{U}(\xi)+\bo(\|\xi\|^m)
=\|\wh{\phi}(\xi)\|^{-2}(1,0,\dots,0)\ol{\wh{U}(\xi)}^{-\tp}\wh{U}(\xi)^{-1}\wh{U}(\xi)+\bo(\|\xi\|^m)\\
&=\|\wh{\phi}(\xi)\|^{-2}(1,0,\dots,0)\ol{\wh{U}(\xi)}^{-\tp}+\bo(\|\xi\|^m)
=\|\wh{\phi}(\xi)\|^{-2}\ol{\wh{\phi}(\xi)}^{\tp}+\bo(\|\xi\|^m),\quad\xi\to 0,
\end{align*}
which implies \er{moment:special}. This proves item (3). This completes the proof.
\ep

\section{Multivariate Balanced Quasi-tight Multiframelets}
\label{sec:qtf}

In this section, we study OEP-based quasi-tight multiframelets with balancing property and compact discrete multiframelet transforms. We shall prove the main result Theorem~\ref{thm:qtf}, and perform further theoretical investigation on multivariate quasi-tight framelets with high balancing orders.

Let us first recall some notations. For $k\in\dZ$, \emph{the difference operator} $\nabla_k$ is defined to be
\[
\nabla_ku(n):=u(n)-u(n-k),\qquad\forall n\in\dZ,\quad u\in(\dsq)^{t\times r}.
\]
For any multi-index $\beta=(\beta_1,\dots,\beta_d)^{\tp}\in\dN_0$, define
$\nabla^\beta:=\nabla^{\beta_1}_{e_1}\nabla^{\beta_2}_{e_2}\dots\nabla^{\beta_d}_{e_d}$,
	where $\{e_1,\dots,e_d\}$ is the standard basis of $\dR$.
For $u\in \dlrs{0}{r}{r}$, we have
\[
\wh{\nabla^{\beta}u}(\xi)=
\wh{\nabla^{\beta}\td}(\xi)\wh{u}(\xi)=
(1-e^{-i\xi_1})^{\beta_1}(1-e^{-i\xi_2})^{\beta_2}
\cdots(1-e^{-i\xi_d})^{\beta_d}\wh{u}(\xi),\quad\xi=(\xi_1,\dots,\xi_d)^{\tp}\in\dR.
\]
For $x=(x_1,\dots,x_d)$ and $y=(y_1,\dots,y_d)$, we say $x\prec y$ if there exists $l\in\{1,\dots,d\}$ such that $x_j=y_j$ for all $j<l$ and $x_l<y_l$. By $x\preceq y$ we mean that $x\prec y$ or $x=y$.

For $d=1$, recall that a $2\pi$-periodic trigonometric polynomial $\wh{u}$ satisfies $\wh{u}(\xi)=\bo(\|\xi\|^m)$ as $\xi\to 0$ if and only if $(1-e^{-i\xi})^m\mid\wh{u}(\xi)$. When $d\ge 2$, we can no longer separate out a moment factor that plays the role of $(1-e^{-i\xi})^m$. Nevertheless, the following result is known in \cite[Theorem 3.6]{han03} and \cite[Lemma 5]{dhacha}, which characterizes the moment condition for arbitrary dimensions.

\begin{lemma}\label{diff:vm}Let $m\in\N$ and $v\in\dlp{0}$. Then $\wh{v}(\xi)=\bo(\|\xi\|^m)$ as $\xi\to 0$ if and only if
$\wh{v}(\xi)=\sum_{\beta\in\dN_{0;m}}
\wh{\nabla^\beta\td}(\xi)
\wh{u_{\beta}}(\xi)$
for some $u_\beta\in\dlp{0}$ for all $\beta\in\dN_{0;m}$, where
$\dN_{0;m}:=\{\beta\in\dN_0:|\beta|=m\}$.
\end{lemma}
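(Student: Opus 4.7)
The plan is to recast this lemma as a statement about Laurent polynomials via the standard identification $\wh{v}(\xi)=P(z_1,\ldots,z_d)$ with $z_j=e^{-i\xi_j}$. The ``if'' direction is immediate: since $\wh{\nabla^\beta\td}(\xi)=\prod_{j=1}^d(1-e^{-i\xi_j})^{\beta_j}=\bo(\|\xi\|^m)$ as $\xi\to 0$ whenever $|\beta|=m$, and each $\wh{u_\beta}$ is smooth, any finite sum of the stated form satisfies $\wh{v}(\xi)=\bo(\|\xi\|^m)$.

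For the converse, I would first clear denominators. Since $v\in\dlp{0}$, the function $\wh{v}(\xi)=P(z)$ is a Laurent polynomial in the $z_j$'s; multiplying by a suitable monomial $z_1^{N_1}\cdots z_d^{N_d}$ (which corresponds to an integer shift of $v$ and does not alter the vanishing order at the point $\mathbf{1}:=(1,\ldots,1)$) reduces the problem to the case where $\tilde{P}\in\C[z_1,\ldots,z_d]$ is an ordinary polynomial vanishing to order $m$ at $\mathbf{1}$. I would then perform the affine change of variables $w_j:=z_j-1$, turning $\tilde P$ into a polynomial $Q(w)\in\C[w_1,\ldots,w_d]$ with a zero of order $m$ at the origin.

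The key algebraic input is then the elementary commutative-algebra fact that the ideal of polynomials vanishing to order at least $m$ at the origin of $\C[w_1,\ldots,w_d]$ equals the $m$-th power of the maximal ideal $(w_1,\ldots,w_d)^m$, and is therefore generated by the monomials $\{w^\beta:|\beta|=m\}$. Hence $Q=\sum_{|\beta|=m}w^\beta R_\beta(w)$ for some polynomials $R_\beta$. Undoing the change of variables and restoring the monomial factor yields $P(z)=\sum_{|\beta|=m}(z-\mathbf{1})^\beta S_\beta(z)$ for Laurent polynomials $S_\beta$; since $z_j-1=-(1-e^{-i\xi_j})$, we have $(z-\mathbf{1})^\beta=(-1)^m\wh{\nabla^\beta\td}(\xi)$, so letting $u_\beta$ be the finitely supported sequence whose Fourier series equals $(-1)^m S_\beta$ completes the argument. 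I do not anticipate any real obstacle here; the only care needed is the bookkeeping of shifts between sequences and Laurent polynomials, and an alternative inductive proof on $m$ (using the univariate factorization $(1-e^{-i\xi_j})\mid \wh{v}$ along each coordinate direction) would also work.
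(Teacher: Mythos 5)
Your proof is correct. Note first that the paper does not actually prove Lemma~\ref{diff:vm}: it is quoted as a known result with citations to \cite[Theorem 3.6]{han03} and \cite[Lemma 5]{dhacha}, so there is no in-paper argument to compare against. Your ``if'' direction is the standard Leibniz observation, and your converse is a clean, self-contained route: pass to Laurent polynomials in $z_j=e^{-i\xi_j}$, clear denominators by a monomial shift (harmless since $e^{-iN\cdot\xi}$ is nonvanishing at $0$), translate by $w=z-\mathbf{1}$, and invoke the elementary fact that the ideal of polynomials with all Taylor coefficients of order $<m$ vanishing at the origin is exactly $(w_1,\dots,w_d)^m$, generated by the monomials $w^\beta$ with $|\beta|=m$. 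The one step you leave implicit --- that the order of vanishing of $\wh v$ at $\xi=0$ (in the paper's sense of $\partial^\mu$-derivatives for $|\mu|<m$) equals the order of vanishing of $P$ at $z=\mathbf{1}$ --- is justified because $z_j=e^{-i\xi_j}$ acts coordinatewise with $z_j'(0)=-i\neq 0$, so the two families of derivatives are related by an invertible triangular transformation; this is routine but worth a sentence. Compared with the usual proofs in the cited literature, which proceed by induction on the dimension or on $m$ using the univariate divisibility by $(1-e^{-i\xi_j})$ coordinate by coordinate (the alternative you mention), your ideal-theoretic argument is shorter and makes transparent exactly which algebraic fact is being used; the inductive argument has the mild advantage of giving an explicit recursive construction of the $u_\beta$ with controllable supports. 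Either way, no gap.
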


\subsection{Proof of Theorem~\ref{thm:qtf}}

To prove Theorem~\ref{thm:qtf}, we start with the following definition.

\begin{definition} Let $\dm$ be a $d\times d$ dilation matrix and $\dn$ be a $d\times d$ integer matrix with $|\det(\dn)|=r\ge 2$. Let $E_{\dn}$ be the vector conversion operator in \er{vec:con} and $\wh{\Vgu_{\dn}}$ in \er{vgu:special}.
	
	\begin{enumerate}
		\item[(a)]
For $\mra\in\dlrs{0}{r}{r}$,
we say that $\mra$ has \emph{order $m$ $E_{\dn}$-balanced sum rules with respect to $\dm$} if $a$ has order $m$ sum rules with respect to $\dm$ with a matching filter $\mrvgu\in \dlrs{0}{1}{r}$ satisfying
\be\label{bsr:mf}
\wh{\mrvgu}(\xi)=\wh{c}(\xi)\wh{\Vgu_{\dn}}(\xi)+\bo(\|\xi\|^m),\qquad\xi\to 0,\text{ for some } c\in\dlp{0} \text{ satisfying } \wh{c}(0)\neq 0.
\ee

\item[(b)]
A filter $\mrvgu\in \dlrs{0}{1}{r}$  is called \emph{an order $m$ $E_{\dn}$-balanced matching filter} for $\mra\in \dlrs{0}{r}{r}$ if it satisfies \eqref{bsr:mf} and
\be\label{bsr:eq}
\wh{\mrvgu}(\dm^{\tp}\xi)\wh{\mra}(\xi+2\pi\omega)=
\td(\omega)\wh{\mrvgu}(\xi)
+\bo(\|\xi\|^m),\qquad\xi\to 0,\quad \omega\in\Omega_{\dm}.
\ee
We define
$\bsr(\mra,\dm,\dn):=m$ with $m$ being the largest such integer satisfying \er{bsr:mf} and \er{bsr:eq}.

		\item[(c)] For $n\in\N$, we say that $\mra$ is \emph{an order $n$ $E_{\dn}$-balanced refinement filter} associated to an $r\times 1$ vector $\mrphi$ of compactly supported distributions if $\wh{\mrphi}(\dm^{\tp}\xi)=\wh{\mra}(\xi)\wh{\mrphi}(\xi)$ for all $\xi\in\dR$ and
		 \be\label{bref}\wh{\mrphi}(\xi)
=\wh{d}(\xi)\ol{\wh{\Vgu_{\dn}}(\xi)}^{\tp}+
\bo(\|\xi\|^n),\quad\xi\to 0,\text{ for some } d\in\dlp{0} \text{ satisfying } \wh{d}(0)\neq 0.\ee
		
	\end{enumerate}
\end{definition}

We first prove a special case of Theorem~\ref{thm:qtf}, which states that certain balanced filters can be used to construct quasi-tight framelets with high order vanishing moments. This result plays a key role in our proof of Theorem~\ref{thm:qtf} on multivariate quasi-tight framelets.

\begin{theorem}\label{thm:qtf:nf}Let $\dm$ be a $d\times d$ dilation matrix and $\dn$ be a $d\times d$ integer matrix with $|\det(\dn)|=r\ge 2$. Define $E_{\dn}$ and $\wh{\Vgu_{\dn}}$ as in \er{vec:con} and \er{vgu:special}, respectively. Suppose that $\mra\in\lrs{0}{r}{r}$ is an order $m$ $E_{\dn}$-balanced refinement filter associated to an $r\times 1$ vector $\mrphi$ of compactly supported functions in $\dLp{2}$ satisfying \er{bref}, and $\mra$ has order $m$ $E_{\dn}$-balanced sum rules with respect to $\dm$ with an order $m$ $E_{\dn}$-balanced matching filter $\mrvgu\in\dlrs{0}{1}{r}$ satisfying \er{bsr:mf}. If
	 \be\label{ba:vgu:phi}
\begin{split}
\wh{\mrvgu}(\xi)=
\|\wh{\mrphi}(\xi)\|^{-2}
\ol{\wh{\mrphi}(\xi)}^{\tp}
+\bo(\|\xi\|^m)
&=\wh{g}(\xi)\wh{\Vgu_{\dn}}(\xi)+\bo(\|\xi\|^m)\text{ as } \xi\to 0\\
&\qquad \text{ for some }g\in\dlp{0}\text{ with }\wh{g}(0)\neq 0,
\end{split}
\ee
	and  	
\be\label{mrphi:moment}
\|\wh{\mrphi}(\xi)\|^2=1+\bo(\|\xi\|^n),\qquad\xi\to 0,\ee
	for some $n\ge 2m$, then there exist $\mrb\in \lrs{0}{s}{r}$ and $\eps_1,\dots,\eps_s\in\{\pm1\}$ for some $s\in\N$ such that
	
	\begin{enumerate}
		 \item[(i)] $\{\mra;\mrb\}_{\td I_r, (\eps_1,\dots,\eps_s)}$ is a quasi-tight $\dm$-multiframelet filter bank satisfying
\be\label{qtffb}
\ol{\wh{\mra}(\xi)}^{\tp}\wh{\mra}(\xi+2\pi\omega)+\ol{\wh{\mrb}(\xi)}^{\tp}\DG(\eps_1,\dots,\eps_s)\wh{\mrb}(\xi+2\pi\omega)=\td(\omega)I_r,\qquad \xi\in\dR,\quad \omega\in\Omega_{\dm},
\ee
where $\td$ and $\Omega_{\dm}$ are defined as in \er{delta:seq} and \er{omega:dm}, respectively.	
		
		\item[(ii)] $\{\mrphi;\psi\}_{(\eps_1,\dots,\eps_s)}$ is a quasi-tight $\dm$-framelet in $\dLp{2}$ and $\psi$ has order $m$ vanishing moments, where
$\wh{\psi}(\xi):=
\wh{\mrb}(\dm^{-\tp}\xi)\wh{\mrphi}(\dm^{-\tp}\xi)$ for $\xi\in \dR$.
	\end{enumerate}
\end{theorem}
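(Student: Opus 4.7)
The plan is to construct $\mrb$ by a generalized matrix spectral factorization with signs. Using the equivalences \eqref{DEF}--\eqref{spectral:1} already derived in the excerpt, the identity \eqref{qtffb} is equivalent to
$$
\ol{P_{\mrb;\dm}(\xi)}^{\tp}\DG(\eps_1,\ldots,\eps_s)P_{\mrb;\dm}(\xi)
\;=\;\cM_{\mra,\td I_r}(\xi)
\;:=\;\DG(I_r,\ldots,I_r)-\ol{P_{\mra;\dm}(\xi)}^{\tp}P_{\mra;\dm}(\xi).
$$
The Hermitian matrix $\cM_{\mra,\td I_r}$ is typically \emph{not} positive semidefinite (a short quadratic form computation using the stacked vector $(\wh{\mrphi}(\xi+2\pi\om{k}))_{k=1}^{d_{\dm}}$ already exhibits a negative value), so the sign freedom $\eps_{\ell}\in\{\pm 1\}$ is essential. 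Once $\mrb$ is constructed so that the above identity holds and the balanced vanishing moment condition $\wh{\Vgu_{\dn}}(\xi)\ol{\wh{\mrb}(\xi)}^{\tp}=\bo(\|\xi\|^m)$ from Theorem~\ref{thm:bp} is satisfied, item (ii) will follow from the quasi-tight case of the OEP (Theorem~\ref{thm:df} with $\theta=\tilde{\theta}=\td I_r$, which is trivially strongly invertible), because $\|\wh{\mrphi}(0)\|^2=1$ by \eqref{mrphi:moment} and $\wh{\psi}(0)=0$ from the vanishing moments of $\mrb$.

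The core technical step is a moment analysis of $\cM_{\mra,\td I_r}$ near the origin, followed by moment factor extraction. Combining the balanced sum rules \eqref{bsr:eq} with $\wh{\mrvgu}=\wh{c}\wh{\Vgu_{\dn}}+\bo(\|\xi\|^m)$, the balanced refinement identity $\wh{\mrphi}(\xi)=\wh{d}(\xi)\ol{\wh{\Vgu_{\dn}}(\xi)}^{\tp}+\bo(\|\xi\|^m)$ from \eqref{bref}, the almost-orthogonality \eqref{ba:vgu:phi}, and the hypothesis $\|\wh{\mrphi}(\xi)\|^2=1+\bo(\|\xi\|^n)$ with $n\ge 2m$, I would show that $\cM_{\mra,\td I_r}(\xi)$ exhibits \emph{double order} $2m$ vanishing at the origin along the block direction generated by $\ol{\wh{\Vgu_{\dn}}(\xi)}^{\tp}$---precisely the direction that the balanced vanishing moment condition forces $P_{\mrb;\dm}$ to annihilate, but only to order $m$. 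Applying Lemma~\ref{diff:vm} to peel off moment factors $\wh{\nabla^{\beta}\td}$ with $|\beta|=m$ reduces the factorization problem to a smaller Hermitian trigonometric polynomial matrix on the complementary subspace. Applying the generalized matrix spectral factorization result of \cite{dhacha,dh18pp} to the reduced matrix yields polynomial factors $N(\xi)$ and signs $\eps_{\ell}\in\{\pm 1\}$ with $M=\ol{N}^{\tp}\DG(\eps_1,\ldots,\eps_s)N$; reassembling with the extracted moment factors produces $P_{\mrb;\dm}$ and hence $\mrb$. Finally, the balanced vanishing moment condition on $\mrb$ combined with \eqref{bref} gives
$$
\wh{\mrb}(\xi)\wh{\mrphi}(\xi)
=\wh{d}(\xi)\,\wh{\mrb}(\xi)\ol{\wh{\Vgu_{\dn}}(\xi)}^{\tp}+\bo(\|\xi\|^m)
=\bo(\|\xi\|^m),
$$
so $\wh{\psi}(\dm^{\tp}\xi)=\bo(\|\xi\|^m)$ and $\psi$ has order $m$ vanishing moments as required.

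The main obstacle is the generalized spectral factorization step. In dimension $d\geq 2$, the classical positive semidefinite spectral factorization via Fej\'er--Riesz fails, and the sign freedom in quasi-tight framelets is precisely what restores enough flexibility to factor a general Hermitian trigonometric polynomial matrix. The delicate point is carrying out this factorization while simultaneously preserving the balanced vanishing moment structure of $\mrb$: the hypotheses \eqref{bsr:mf}, \eqref{bref}, \eqref{ba:vgu:phi}, together with the strengthening $n\ge 2m$ in \eqref{mrphi:moment}, are each calibrated so that the low-order Taylor expansion of $\cM_{\mra,\td I_r}$ at the origin is exactly aligned with the balanced direction $\wh{\Vgu_{\dn}}$, making the generalized factorization and the order $m$ $E_{\dn}$-balanced vanishing moments on $\mrb$ simultaneously attainable.
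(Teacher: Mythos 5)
Your overall strategy coincides with the paper's: reduce \eqref{qtffb} to a generalized spectral factorization of $\cM_{\mra,\td I_r}$, use the balanced hypotheses together with $n\ge 2m$ to exhibit order-$m$ (resp.\ order-$2m$) vanishing in the relevant blocks, extract moment factors via Lemma~\ref{diff:vm}, and factor the remaining Hermitian matrix with signs $\pm 1$. The gap is in the middle step. ``Double order $2m$ vanishing along the block direction generated by $\ol{\wh{\Vgu_{\dn}}(\xi)}^{\tp}$'' is a statement about the quadratic form $\ol{v}^\tp\cM v$ evaluated at one particular $v$, whereas Lemma~\ref{diff:vm} is a divisibility statement for individual scalar trigonometric polynomials; to write $\cM$ as a finite sum $\sum_{\alpha,\beta}\ol{\wh{\Delta_\alpha}}^{\tp}\wh{B_{\alpha,\beta}}\wh{\Delta_\beta}$ with $|\alpha|=|\beta|=m$ you need specific matrix \emph{entries} to be $\bo(\|\xi\|^m)$ or $\bo(\|\xi\|^{2m})$, and directional vanishing alone does not give that. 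The paper bridges this by first conjugating with a strongly invertible $\wh U$ from Theorem~\ref{thm:normalform}, which sends $\mrvgu$ to $(1,0,\dots,0)+\bo(\|\xi\|^m)$ and $\mrphi$ to $(1,0,\dots,0)^\tp+\bo(\|\xi\|^n)$; crucially, hypothesis \eqref{ba:vgu:phi} is exactly what delivers the ``almost orthogonality'' \eqref{eq:ortho}, i.e.\ $\ol{\wh U}^{-\tp}\wh U^{-1}$ is diagonal up to $\bo(\|\xi\|^{n})$, and only then does the explicit Taylor computation of $\cM_{a,\UU}$ with $\wh\UU:=\ol{\wh U}^{-\tp}\wh U^{-1}$ yield the required entry-wise vanishing orders \eqref{a11:p1}--\eqref{a11:p3}. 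Your outline never introduces this change of basis and cites \eqref{ba:vgu:phi} only as a ``calibration'' without actually using it, so the moment-extraction step as written does not go through.

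Two smaller points. First, the cross terms $\ol{\wh{\Delta_\alpha}}^\tp \wh{B_{\alpha,\beta}}\wh{\Delta_\beta}+\ol{\wh{\Delta_\beta}}^\tp\ol{\wh{B_{\alpha,\beta}}}^\tp\wh{\Delta_\alpha}$ with $\alpha\prec\beta$ are not Hermitian pieces that a spectral factorization can treat separately; the paper realizes each as $\ol{(E_1\Delta_\alpha+E_2\Delta_\beta)}^\tp(E_1\Delta_\alpha+E_2\Delta_\beta)$ minus Hermitian corrections that are absorbed into the diagonal $\alpha=\beta$ terms, and your sketch is silent on this. Second, no external factorization theorem is really needed (and \cite{dh18pp} is univariate in any case): once everything is Hermitian with the moment factors already attached, the elementary identity $4pqH=\ol{(pI+qH)}^\tp(pI+qH)-\ol{(pI-qH)}^\tp(pI-qH)$ supplies the signed factorization in any dimension, and because every resulting row of $P_{b;\dm}$ is a left multiple of some $\Delta_\alpha$, the order-$m$ $E_{\dn}$-balanced vanishing moments of $\mrb$ come for free. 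With the normal-form conjugation inserted, your outline becomes essentially the paper's proof.
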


\bp As \er{ba:vgu:phi} holds, by Theorem~\ref{thm:normalform}, there exists a strongly invertible $U\in\dlrs{0}{r}{r}$ such that
\begin{align*}
&\wh{\phi}(\xi):=\wh{U}(\xi)
\wh{\mrphi}(\xi)=(1,0,\dots,0)^{\tp}+\bo(\|\xi\|^n),\qquad\xi\to 0,\\
&\wh{\vgu}(\xi):=\wh{\mrvgu}(\xi)\wh{U}(\xi)^{-1}=(1,0,\dots,0)+\bo(\|\xi\|^m),\qquad\xi\to 0,
\end{align*}
and \er{eq:ortho} holds with $\phi$ being replaced by $\mrphi$. Moreover, by letting $\wh{a}(\xi)=\wh{U}(\dm^{\tp}\xi)\wh{\mra}(\xi)\wh{U}(\xi)^{-1}$, we see that $\wh{a}$ takes the standard $(m,n)$-normal form in item (2) of Theorem~\ref{thm:normalform} with $\mra$ being replaced by $a$.
%
%
Enumerate $\Omega_{\dm}$ as in \er{omega:dm}. Define $\wh{\UU}:=\ol{\wh{U}}^{-\tp}\wh{U}^{-1}$ and
\[
\wh{a_1}(\xi):=\wh{\UU}(\xi)-\ol{\wh{a}(\xi)}^\tp\wh{\UU}(\dm^{\tp}\xi)\wh{a}(\xi)
\quad\mbox{and}\quad
\wh{a_j}(\xi):=-\ol{\wh{a}(\xi)}^\tp\wh{\UU}(\dm^{\tp}\xi)
\wh{a}(\xi+2\pi\om{j}),\quad j=2,\dots,d_{\dm}.
\]
For $j=1$, using \er{eq:ortho} and \er{mrphi:moment}, we have
$$\wh{a_1}(\xi)=\begin{bmatrix}1 & \\
& \wh{\tilde{U}}(\xi)\end{bmatrix}-\ol{\wh{a}(\xi)}^\tp\begin{bmatrix}1 & \\
& \wh{\tilde{U}}(\dm^{\tp} \xi)\end{bmatrix}\wh{a}(\xi)+\bo(\|\xi\|^n)=\begin{bmatrix}p_1(\xi) & p_2(\xi) \\
p_3(\xi)& p_4(\xi)\end{bmatrix},\quad\xi\to 0.$$
where $\wh{\tilde{U}}(\xi)=\DG\left(\|\wh{u_2}(\xi)\|^2,\dots,\|\wh{u_r}(\xi)\|^2\right)$ and $\wh{u_j}$ denotes the $j$-th column of $\wh{U}^{-1}$. Here $p_1,p_2,p_3,p_4$ are $1\times 1$, $1\times (r-1)$, $(r-1)\times 1$ and $(r-1)\times (r-1)$ matrices of $2\pi\dZ$-periodic trigonometric polynomials. Using \er{mra:11}, \er{mra:12} and \er{mra:21} with $\mra$ being replaced by $a$, we deduce that
\begin{align}
&p_1(\xi)=1-\left(|\wh{a_{1,1}}(\xi)|^2+\ol{\wh{a_{2,1}}(\xi)}^{\tp}\wh{\tilde{U}}(\dm^{\tp}\xi)\wh{a_{2,1}}(\xi)\right)+\bo(\|\xi\|^n)=\bo(\|\xi\|^n),\qquad\xi\to 0,\label{a11:p1}\\
&p_2(\xi)=-\ol{\wh{a_{1,1}}(\xi)}\wh{a_{1,2}}(\xi)-\ol{\wh{a_{2,1}}(\xi)}^{\tp}\wh{\tilde{U}}(\dm^{\tp}\xi)\wh{a_{2,2}}(\xi)+\bo(\|\xi\|^n)=\bo(\|\xi\|^m),\qquad\xi\to 0.\label{a11:p2}\\
&p_3(\xi)=\ol{p_2(\xi)}^{\tp}=\bo(\|\xi\|^m),\qquad \xi\to0.\label{a11:p3}
\end{align}
For every $\beta\in\dN_0$ with $|\beta|=m$, define $\Delta_\beta:=\DG(\nabla^\beta\td, \td I_{r-1})
\in\dlrs{0}{r}{r}$.
Using \er{a11:p1},\er{a11:p2},\er{a11:p3}, Lemma~\ref{diff:vm} and $n\ge 2m$, we see that there exist $B_{1,\alpha,\beta}\in\dlrs{0}{r}{r}$ for all $\alpha,\beta\in\dN_{0;m}$
such that
\be\label{a1:fac}
\wh{a_1}(\xi)=\sum_{\alpha,\beta\in\dN_{0;m}}\ol{\wh{\Delta_\alpha}(\xi)}^{\tp}\wh{B_{1,\alpha,\beta}}(\xi)\wh{\Delta_\beta}(\xi),
\ee
where $\dN_{0;m}:=\{\beta\in\dN_0:|\beta|=m\}$.
For $j=2,\dots,d_\dm$, we have
$$\wh{a_j}(\xi)=-\ol{\wh{a}(\xi)}^{\tp}\begin{bmatrix}1 &\\
& \wh{\tilde{U}}(\dm^{\tp}\xi)\end{bmatrix}
\wh{a}(\xi+2\pi\om{j})+\bo(\|\dm^{\tp}\xi\|^n)=\begin{bmatrix}
p_{j,1}(\xi) & p_{j,2}(\xi)\\
p_{j,3}(\xi) & p_{j,4}(\xi)
\end{bmatrix}.$$
Here $p_{j,1},p_{j,2},p_{j,3},p_{j,4}$ are $1\times 1$, $1\times (r-1)$, $(r-1)\times 1$ and $(r-1)\times (r-1)$ matrices of $2\pi\dZ$-periodic trigonometric polynomials.
It follows from the identities in \er{mra:11}, \er{mra:12}, \er{mra:21} with $\mra$ being replaced by $a$ and $n\ge 2m$ that as $\xi\to 0$,
%
\begin{align*}
&p_{j,1}(\xi)=-\left(\ol{\wh{a_{1,1}}(\xi)}\wh{a_{1,1}}(\xi+2\pi \om{j})+\ol{\wh{a_{2,1}}(\xi)}^{\tp}\wh{\tilde{U}}(\dm^{\tp}\xi)\wh{a_{2,1}}(\xi+2\pi\om{j})\right)+\bo(\|\dm^{\tp}\xi\|^n)
=\bo(\|\xi\|^m),\\
&p_{j,2}(\xi)=-\ol{\wh{a_{1,1}}(\xi)}
\wh{a_{1,2}}(\xi+2\pi\om{j})-\ol{\wh{a_{2,1}}(\xi)}^{\tp}\wh{\tilde{U}}(\dm^{\tp}\xi)
P_{2,2}(\xi+2\pi\om{j})+\bo(\|\dm^{\tp}\xi\|^n)=\bo(\|\xi\|^m),\\
&p_{j,3}(\xi)=-\ol{\wh{a_{1,2}}(\xi)}
\wh{a_{1,1}}(\xi+2\pi\om{j})-\ol{\wh{a_{2,2}}(\xi)}^{\tp}\wh{\tilde{U}}(\dm^{\tp}\xi)\wh{a_{2,1}}(\xi+2\pi\om{j})+\bo(\|\dm^{\tp}\xi\|^n) =\bo(\|\xi\|^m),
\end{align*}
and using symmetry and the same argument, we further have
\[
p_{j,1}(\xi+2\pi \om{j})= \bo(\|\xi\|^m) \quad \mbox{and}\quad
p_{j,3}(\xi+2\pi \om{j})= \bo(\|\xi\|^m),\quad \xi\to 0, j=2,\ldots,d_{\dm}.
\]
Hence the above identities and Lemma~\ref{diff:vm} yield
\be\label{aj:fac}
\wh{a_j}(\xi)=\sum_{\alpha,\beta\in\dN_{0;m}}\ol{\wh{\Delta_\alpha}(\xi)}^{\tp}\wh{B_{j,\alpha,\beta}}(\xi)
\wh{\Delta_\beta}(\xi+2\pi\om{j}),
\ee
for some $B_{j,\alpha,\beta}\in\dlrs{0}{r}{r}$ for all $\alpha,\beta\in\dN_{0;m}$ and all $j=2,\dots,d_{\dm}$.
Recall that $P_{a;\dm}(\xi):=[\wh{a}(\xi+\om{1}),\ldots,
\wh{a}(\xi+2\pi \om{d_{\dm}})]$ as in \eqref{Pb}.
It follows from \er{a1:fac} and \er{aj:fac} that
\be \label{fac:maW}
\begin{split}
\cM_{a,\UU}(\xi):=
&\DG\left(\wh{\UU}(\xi+\om{1}),
\ldots,\wh{\UU}(\xi+2\pi\om{d_{\dm}})\right)
-\ol{P_{a;\dm}(\xi)}^\tp
\wh{\UU}(\dm^{\tp} \xi) P_{a;\dm}(\xi)\\
=&\sum_{j=1}^{d_{\dm}}
D_{a_j,\omega_j}(\xi)=
\sum_{j=1}^{d_{\dm}}
\sum_{\alpha,\beta\in\dN_{0;m}}
\ol{D_{\Delta_\alpha,0}(\xi)}^{\tp}
D_{B_{j,\alpha,\beta},\omega_j}(\xi)
D_{\Delta_\beta,0}(\xi),
\end{split}
\ee
where $D_{u,\omega}:=D_{u,\omega;\dm}$ is defined via \er{Duni} for every $u\in\dlrs{0}{r}{r}$ and $\omega\in\Omega_{\dm}$ with the subscript $\dm$ being dropped for simplicity. It follows from \er{DEF} and \er{fac:maW} that
\be\label{pr:coset}\begin{aligned} &d_\dm^{-2}\FF_{r;\dm}(\xi)\cM_{a,\UU}(\xi)\ol{\FF_{r;\dm}(\xi)}^{\tp}\\	 =&d_{\dm}^{-4}\sum_{\alpha,\beta\in\dN_{0;m}}
\left(\FF_{r;\dm}(\xi)\ol{D_{\Delta_\alpha,0}(\xi)}^{\tp}
\ol{\FF_{r;\dm}(\xi)}^{\tp}\right)
\left(\sum_{j=1}^{d_{\dm}}\FF_{r;\dm}(\xi)
D_{B_{j,\alpha,\beta},\omega_j}(\xi)
\ol{\FF_{r;\dm}(\xi)}^{\tp}\right)
\left(\FF_{r;\dm}(\xi)
D_{\Delta_\beta,0}(\xi)
\ol{\FF_{r;\dm}(\xi)}^{\tp}\right)\\ =&\sum_{\alpha,\beta\in\dN_{0;m}}
\ol{E_{\Delta_\alpha,0}(\dm^{\tp}\xi)}^{\tp}
\left(d_\dm^{-1}\sum_{j=1}^{d_{\dm}}
E_{B_{j,\alpha,\beta},\omega_j}(\dm^{\tp}\xi)\right)
E_{\Delta_\beta,0}(\dm^{\tp}\xi),
\end{aligned}\ee
where $E_{u,\omega}:=E_{u,\omega;\dm}$ is defined via \er{Euni} for every $u\in\dlrs{0}{r}{r}$ and $\omega\in\Omega_{\dm}$ by dropping the subscript $\dm$. Define
%
\[
\mathring{E}_{\alpha,\beta}(\xi):=
\frac{d_\dm^{-1}}{2}\sum_{j=1}^{d_{\dm}}
\Big( E_{B_{j,\alpha,\beta},\omega_j}(\xi)+\ol{E_{B_{j,\beta,\alpha},\omega_j}(\xi)}^{\tp}\Big).
\]
It is straightforward to see that $\ol{\mathring{E}_{\alpha,\alpha}}^{\tp}=\mathring{E}_{\alpha,\alpha}$ for all $\alpha\in\dN_{0;m}$. It follows that \er{pr:coset} is equivalent to
\be\label{pr:coset:2}\begin{aligned}
&d_\dm^{-2}\FF_{r;\dm}(\xi)\cM_{a,\UU}(\xi)\ol{\FF_{r;\dm}(\xi)}^{\tp}\\
 =&\sum_{\alpha,\beta\in\dN_{0;m},\alpha\prec\beta}
\Big(
\ol{E_{\Delta_\alpha,0}(\dm^{\tp}\xi)}^{\tp}
\mathring{E}_{\alpha,\beta}(\dm^{\tp}\xi)
E_{\Delta_\beta,0}(\dm^{\tp}\xi)+
\ol{E_{\Delta_\beta,0}(\dm^{\tp}\xi)}^{\tp}
\ol{\mathring{E}_{\alpha,\beta}(\dm^{\tp}\xi)}^{\tp}
E_{\Delta_\alpha,0}(\dm^{\tp}\xi)\Big)\\ &+\sum_{\alpha\in\dN_{0;m}}
\ol{E_{\Delta_\alpha,0}(\dm^{\tp}\xi)}^{\tp}
\mathring{E}_{\alpha,\alpha}(\dm^{\tp}\xi)
E_{\Delta_\alpha,0}(\dm^{\tp}\xi).
\end{aligned}
\ee
For
$\alpha,\beta\in\dN_{0;m}$ and $\alpha\prec\beta$, we take any factorization
$\mathring{E}_{\alpha,\beta}(\xi)=
\ol{E_{\alpha,\beta,1}(\xi)}^{\tp}E_{\alpha,\beta,2}(\xi)$
such that $E_{\alpha,\beta,1}$ and $E_{\alpha,\beta,2}$ are $r\times r$ matrices of $2\pi\dZ$-periodic trigonometric polynomials. By calculation:
\begin{align*} &\sum_{\alpha,\beta\in\dN_{0;m},\alpha\prec\beta}
\ol{\left(E_{\alpha,\beta,1}(\xi)	 E_{\Delta_\alpha,0}(\xi)+
E_{\alpha,\beta,2}(\xi)	 E_{\Delta_\beta,0}(\xi)\right)}^{\tp}
\left(E_{\alpha,\beta,1}(\xi)	 E_{\Delta_\alpha,0}(\xi)+
E_{\alpha,\beta,2}(\xi)	 E_{\Delta_\beta,0}(\xi)\right)\\ =&\sum_{\alpha,\beta\in\dN_{0;m},\alpha\prec\beta}
\Big( \ol{E_{\Delta_\alpha,0}(\xi)}^{\tp}
\mathring{E}_{\alpha,\beta}(\xi)
E_{\Delta_\beta,0}(\xi)+
\ol{E_{\Delta_\beta,0}(\xi)}^{\tp}
\ol{\mathring{E}_{\alpha,\beta}(\xi)}^{\tp}
E_{\Delta_\alpha,0}(\xi)\Big)\\ &+\sum_{\alpha,\beta\in\dN_{0;m},\alpha\prec\beta}
\Big( \ol{E_{\Delta_\alpha,0}(\xi)}^{\tp}
\ol{E_{\alpha,\beta,1}(\xi)}^{\tp}
E_{\alpha,\beta,1}(\xi)
E_{\Delta_\alpha,0}(\xi)+
\ol{E_{\Delta_\beta,0}(\xi)}^{\tp}
\ol{E_{\alpha,\beta,2}(\xi)}^{\tp}
E_{\alpha,\beta,2}(\xi)
E_{\Delta_\beta,0}(\xi) \Big) \\ =&\sum_{\alpha,\beta\in\dN_{0;m},\alpha\prec\beta}
\Big( \ol{E_{\Delta_\alpha,0}(\xi)}^{\tp}
\mathring{E}_{\alpha,\beta}(\xi)
E_{\Delta_\beta,0}(\xi)+
\ol{E_{\Delta_\beta,0}(\xi)}^{\tp}
\ol{\mathring{E}_{\alpha,\beta}(\xi)}^{\tp}
E_{\Delta_\alpha,0}(\xi)\Big)\\ &+\sum_{\alpha\in\dN_{0;m}}
\ol{E_{\Delta_\alpha,0}(\xi)}^{\tp}
E_{\alpha}(\xi)E_{\Delta_\alpha,0}(\xi),
\end{align*}
where $E_\alpha$ is a Hermitian $(d_{\dm}r)\times (d_{\dm}r)$ matrix of $2\pi\dZ$-periodic trigonometric polynomials for every $\alpha\in\dN_{0;m}$.
Define $\eps_{\alpha,\beta,k}:=1$ and $b_{\alpha,\beta,k}\in\dlrs{0}{1}{r}$ for  $k=1,\dots,rd_{\dm}$ via
\[
\wh{b_{\alpha,\beta}}(\xi):=\begin{bmatrix}\wh{b_{\alpha,\beta,1}}(\xi)\\
\vdots\\	 \wh{b_{\alpha,\beta,rd_{\dm}}}(\xi)\end{bmatrix}	 :=E_{\alpha,\beta,1}(\dm^{\tp}\xi)\FF_{r;\dm}(\xi)\begin{bmatrix}
	\wh{\Delta_\alpha}(\xi)\\
	\pmb{0}_{(d_\dm-1)r \times r}
\end{bmatrix}+E_{\alpha,\beta,2}(\dm^{\tp}\xi)\FF_{r;\dm}(\xi)\begin{bmatrix}
	\wh{\Delta_\beta}(\xi)\\
	\pmb{0}_{(d_\dm-1)r \times r}
\end{bmatrix},
\]
for all $\alpha,\beta\in\dN_{0;m}$ with $\alpha\prec\beta$, where $\pmb{0}_{q\times t}$ stands for the $q\times t$ zero matrix. Define $P_{b;\dm}$ via \er{Pb} for a matrix-valued filter $b$. By \er{DEF} and $\ol{\FF_{r;\dm}(\xi)}^{\tp}\FF_{r;\dm}(\xi)=d_{\dm}I_{d_{\dm}r}$, it follows that
\be\label{b:mu:nu:1}
\begin{aligned}
P_{b_{\alpha,\beta};\dm}(\xi)=
&E_{\alpha,\beta,1}(\dm^{\tp}\xi)
\FF_{r;\dm}(\xi)
D_{\Delta_\alpha,0}(\xi)+
E_{\alpha,\beta,2}(\dm^{\tp}\xi)
\FF_{r;\dm}(\xi)
D_{\Delta_\beta,0}(\xi)\\	 =&E_{\alpha,\beta,1}(\dm^{\tp}\xi)
E_{\Delta_\alpha,0}(\dm^{\tp}\xi)
\FF_{r;\dm}(\xi)+
E_{\alpha,\beta,2}(\dm^{\tp}\xi)
E_{\Delta_\beta,0}(\dm^{\tp}\xi)\FF_{r;\dm}(\xi).
\end{aligned}\ee
Similarly, for $\ell\in \{1,2\}$, we define $\eps_{\ell;\alpha,k}:=(-1)^{\ell+1}$
and $b_{\ell;\alpha,k}$
by
\[\wh{b_{\ell;\alpha}}(\xi):=\begin{bmatrix}\wh{b_{\ell;\alpha,1}}(\xi)\\
	\vdots\\ \wh{b_{\ell;\alpha,d_{\dm}r}}(\xi)\end{bmatrix}:=\left(p I_r-(-1)^{\ell} q (\mathring{E}_{\alpha,\alpha}(\dm^{\tp}\xi)-E_\alpha(\dm^{\tp}\xi))\right)\FF_{r;\dm}(\xi)\begin{bmatrix}
	\wh{\Delta_\alpha}(\xi)\\
	\pmb{0}_{(d_\dm-1)r \times r}
\end{bmatrix},
\]
%
for $\alpha\in\dN_{0;m}$ and $k=1,\dots,d_{\dm}r$, where $p,q\in\R$ satisfy $p+q=\frac{1}{4}$. We conclude that
\be \label{b:1:mu}
P_{b_{\ell;\alpha};\dm}(\xi)=\left(p I_r-(-1)^\ell q (\mathring{E}_{\alpha,\alpha}(\dm^{\tp}\xi)-
E_\alpha(\dm^{\tp}\xi))\right)
E_{\Delta_\alpha,0}(\dm^{\tp}\xi)\FF_{r;\dm}(\xi),
\ee
%
for $\ell\in \{1,2\}$.
Define
\begin{align*}
\{(b_{\ell},\eps_{\ell}):{\ell}=1,\dots,s\}:=&\{(b_{\alpha,\beta,k},\eps_{\alpha,\beta,k}):\alpha,\beta\in\dN_{0;m}\text{ with }\alpha\prec\beta, k=1,\dots,d_{\dm}r\}\\	 &\cup\{(b_{\ell;\alpha,k},\eps_{\ell;\alpha,k}):\alpha\in\dN_{0;m}, k=1,\dots,d_{\dm}r,\ell=1,2\},
\end{align*}
and let $b:=[b_1^{\tp},\dots,b_s^{\tp}]^{\tp}$. We claim that $\{a;b\}_{\UU;(\eps_1,\dots,\eps_s)}$ is an OEP-based quasi-tight $\dm$-framelet filter bank. Indeed, by \er{pr:coset:2}, \er{b:mu:nu:1} and \er{b:1:mu}, we have
\begin{align*}
&\ol{P_{b;\dm}(\xi)}^{\tp}\DG(\eps_1,\dots,\eps_{s})P_{b;\dm}(\xi)\\
=&\sum_{\alpha,\beta\in\dN_{0;m},\alpha\prec\beta}
\ol{P_{b_{\alpha,\beta};\dm}(\xi)}^{\tp}
P_{b_{\alpha,\beta};\dm}(\xi)+
\sum_{\alpha\in\dN_{0;m}}
\Big(
\ol{P_{b_{1;\alpha};\dm}(\xi)}^{\tp}
P_{b_{1;\alpha};\dm}(\xi)-\ol{P_{b_{2;\alpha};\dm}(\xi)}^{\tp}P_{b_{2;\alpha};\dm}(\xi)\Big)\\	 =&\ol{\FF_{r;\dm}(\xi)}^{\tp}
\left(\sum_{\alpha,\beta\in\dN_{0;m},\alpha\prec\beta}
\Big(\ol{E_{\Delta_\alpha,0}(\dm^{\tp}\xi)}^{\tp}
\mathring{E}_{\alpha,\beta}(\dm^{\tp}\xi)
E_{\Delta_\beta,0}(\dm^{\tp}\xi)+
\ol{E_{\Delta_\beta,0}(\dm^{\tp}\xi)}^{\tp}
\ol{\mathring{E}_{\alpha,\beta}(\dm^{\tp}\xi)}^{\tp}
E_{\Delta_\alpha,0}(\dm^{\tp}\xi)\Big)\right.\\
 &\qquad\qquad \left.+\sum_{\alpha\in\dN_{0;m}}
 \ol{E_{\Delta_\alpha,0}(\dm^{\tp}\xi)}^{\tp}
 \mathring{E}_{\alpha,\alpha}(\dm^{\tp}\xi)E_{\Delta_\alpha,0}(\dm^{\tp}\xi)\right)\FF_{r;\dm}(\xi)\\	 =&\ol{\FF_{r;\dm}(\xi)}^{\tp}\left(d_\dm^{-2}\FF_{r;\dm}(\xi)\cM_{a,\UU}(\xi)\ol{\FF_{r;\dm}(\xi)}^{\tp}\right)\FF_{r;\dm}(\xi)\\
	=&\cM_{a,\UU}(\xi).
\end{align*}
This proves the claim. Define $\mrb\in\dlrs{0}{s}{r}$ by $\wh{\mrb}=\wh{b}\wh{U}$. The above identity is equivalent to saying that  $\{\mra;\mrb\}_{\td I_r;(\eps_1,\dots,\eps_s)}$ is a quasi-tight $\dm$-framelet filter bank. This proves item (i).

By definition in item (ii), $\wh{\psi}(\dm^\tp\xi)=\wh{\mrb}(\xi)\wh{\mrphi}(\xi)$.
Note that $\ol{\wh{\phi}(0)}^{\tp}\wh{\UU}(0)\wh{\phi}(0)=\|\wh{\mrphi}(0)\|^2=1$. Thus by Theorem~\ref{thm:df}, $\{\mrphi;\psi\}_{(\eps_1,\dots,\eps_s)}$ is a quasi-tight $\dm$-framelet in $\dLp{2}$. Moreover, by $n\ge m$, we have
\[
\begin{bmatrix}
\wh{\Delta_\alpha}(\xi)\\
\pmb{0}_{(d_\dm-1)r \times r}
\end{bmatrix}\wh{\phi}(\xi)=\begin{bmatrix}
\wh{\Delta_\alpha}(\xi)\\
\pmb{0}_{(d_\dm-1)r \times r}
\end{bmatrix}[1,\pmb{0}_{1\times {(r-1)}}]^{\tp}+\bo(\|\xi\|^n)=\bo(\|\xi\|^m),\qquad\xi\to 0,\quad\forall\alpha\in\dN_{0;m}.
\]
Thus it follows that
$\wh{\psi}(\dm^{\tp}\xi)=\wh{\mrb}(\xi)\wh{\mrphi}(\xi)=\wh{b}(\xi)\wh{\phi}(\xi)=\bo(\|\xi\|^m)$ as $\xi\to 0$.
This proves item (ii).
\ep

We are now ready to prove the main result Theorem~\ref{thm:qtf} on multivariate quasi-tight framelets.

\bp[Proof of Theorem~\ref{thm:qtf}]By Theorem~\ref{thm:normalform:gen}, there exists a strongly invertible $\theta\in\dlrs{0}{r}{r}$ such that
\be\label{mr:vgu}\wh{\mrvgu}(\xi):=\wh{\vgu}(\xi)\wh{\theta}(\xi)^{-1}=\frac{1}{\sqrt{r}}\wh{\Vgu_{\dn}}(\xi)+\bo(\|\xi\|^m),\qquad\xi\to 0,\ee
\be\label{mr:phi}\wh{\mrphi}(\xi):=\wh{\theta}(\xi)\wh{\phi}(\xi)=\frac{1}{\sqrt{r}}\ol{\wh{\Vgu_{\dn}}(\xi)}^{\tp}+\bo(\|\xi\|^n),\qquad\xi\to 0,\ee
for some $n\ge 2m$, where $\wh{\Vgu_{\dn}}$ is defined in \er{vgu:special}. In fact, the proof below works as long as \er{ba:vgu:phi} and \er{mrphi:moment} hold with $n\ge 2m$.

By the choice of $\theta$, item (2) trivially holds. Let $\wh{\mra}(\xi):=\wh{\theta}(\dm^{\tp}\xi)\wh{a}(\xi)\wh{\theta}(\xi)^{-1}$. Then $\mra$ is an order $m$ $E_{\dn}$-balanced refinement filter associated to the refinement vector function $\mrphi$, with the order $m$ $E_{\dn}$-balanced matching filter $\mrvgu\in\dlrs{0}{1}{r}$ satisfying \er{bsr:mf}. Moreover, \er{ba:vgu:phi} and \er{mrphi:moment} hold. Thus by Theorem~\ref{thm:qtf:nf}, there exist $b\in\dlrs{0}{s}{r}$ and $\eps_1,\dots,\eps_s\in\{\pm 1\}$ such that items (1) and (3) hold.

On the other hand, using \er{mr:vgu} and \er{mr:phi}, we have
\[
\wh{\Vgu_{\dn}}(\xi)\ol{\wh{\mra}(\xi)}^{\tp}=\sqrt{r}\ol{\wh{\mrphi}(\xi)}^{\tp}\ol{\wh{\mra}(\xi)}^{\tp}+\bo(\|\xi\|^n)=\sqrt{r}\ol{\wh{\mrphi}(\dm^{\tp}\xi)}^{\tp}+\bo(\|\xi\|^n)=\wh{\Vgu_{\dn}}(\dm^{\tp}\xi)+\bo(\|\xi\|^m),\quad \xi\to0,
\]
%
and
%
\[
\wh{\Vgu_{\dn}}(\xi)\ol{\wh{\mrb}(\xi)}^{\tp}=\sqrt{r}\ol{\wh{\mrphi}(\xi)}^{\tp}\ol{\wh{\mrb}(\xi)}^{\tp}+\bo(\|\xi\|^n)=\sqrt{r}\ol{\wh{\psi}(\dm^{\tp}\xi)}^{\tp}+\bo(\|\xi\|^n)=\bo(\|\xi\|^m),\quad \xi\to0.
\]
Hence item (4) follows from Theorem~\ref{thm:bp}. The proof is now complete.
\ep

Though Theorem~\ref{thm:qtf} is for multiplicity $r\ge 2$, one can easily obtain a similar but weaker result for $r=1$. For $r=1$, the notion of balancing property will not come into play since we no longer need the vectorization of scalar data. On the other hand, a scalar filter $\theta\in\dlp{0}$ is strongly invertible if and only if $\wh{\theta}(\xi)=ce^{-ik\cdot\xi}$ for some $c\in\C\setminus\{0\}$ and $k\in\dZ$. Thus it is too much to expect the strong invertibility of $\theta$ when $r=1$ (see \cite{han09} for details about the case $d=1$). By applying the Hermitian matrix decomposition technique as presented in the proof of Theorem~\ref{thm:qtf:nf},  one can still achieve high vanishing moments on framelet generators. We have the following corollary of Theorem~\ref{thm:qtf:nf}.

\begin{cor}\label{cor:qtf:r:1} Let $\dm$ be a $d\times d$ dilation matrix and let $\phi\in \dLp{2}$ be a compactly supported refinable function satisfying $\wh{\phi}(\dm^{\tp} \xi)=\wh{a}(\xi)\wh{\phi}(\xi)$ with $\wh{\phi}(0)\ne 0$, where $a\in\dlp{0}$ has order $m$ sum rules with respect to $\dm$ satisfying \eqref{sr} with a matching filter $\vgu\in \dlp{0}$ such that $\wh{v}(\xi)=1/\wh{\phi}(\xi)+\bo(\|\xi\|^m)$ as $\xi \to 0$. Then there exist $b\in \dlrs{0}{s}{1}$, $\eps_1,\ldots,\eps_s\in \{\pm 1\}$ and $\theta\in \dlp{0}$ such that
	
	\begin{enumerate}
		\item  $\{a;b\}_{\Theta;(\eps_1,\dots,\eps_s)}$ forms an OEP-based quasi-tight $\dm$-framelet filter bank satisfying \eqref{t:fbk}.
		%
		%
		
		\item $\{\mrphi;\psi\}_{(\eps_1,\dots,\eps_s)}$ is a compactly supported quasi-tight $\dm$-framelet in $\dLp{2}$ and $\psi$ has order $m$ vanishing moments, where $\mrphi$ and $\psi$ are defined in \eqref{mphi:mpsi}.
\end{enumerate}
\end{cor}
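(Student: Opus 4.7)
The plan is to specialize the proof strategy of Theorem~\ref{thm:qtf:nf} to the scalar case $r=1$, retaining its two essential ingredients — the moment identities on $\wh{\Theta}$ and the signed Hermitian decomposition of $\cM_{a,\Theta}$ via Lemma~\ref{diff:vm} — while dropping items~(2)--(4) of Theorem~\ref{thm:qtf}, all of which rest on strong invertibility of $\theta$. In the scalar case a strongly invertible filter is necessarily a non-trivial monomial, which would defeat the entire purpose of OEP, so those items are unavoidably abandoned.

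First I would construct the filter $\theta\in\dlp{0}$. Because $\wh{\phi}(0)\ne 0$ and $\wh{\phi}$ is entire, the reciprocal $1/\wh{\phi}$ is smooth near the origin, so we can pick a trigonometric polynomial $\wh{\theta}$ with $\wh{\theta}(\xi)=1/\wh{\phi}(\xi)+\bo(\|\xi\|^{2m})$ as $\xi\to 0$. Setting $\wh{\Theta}(\xi):=|\wh{\theta}(\xi)|^2$, condition \eqref{vm:0} is immediate, namely $\wh{\Theta}(\xi)|\wh{\phi}(\xi)|^2=1+\bo(\|\xi\|^{2m})$, while condition \eqref{vm:moments} follows from a standard iteration of the refinement equation: for every $k\in\dZ\setminus\{0\}$, we choose the smallest $j_0\ge 1$ with $\dm^{-\tp j_0}k\notin\dZ$ and apply the sum-rule hypothesis at level $j_0$ to the factor $\wh{a}(\dm^{-\tp j_0}\xi+2\pi\omega)$ with $\omega=\dm^{-\tp j_0}k\bmod\dZ\in\Omega_{\dm}\setminus\{0\}$, yielding $\wh{\phi}(\xi+2\pi k)=\bo(\|\xi\|^m)$.

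Next I would analyse the $d_\dm\times d_\dm$ Hermitian scalar matrix $\cM_{a,\Theta}$ in \eqref{spectral}. Writing its entries through $\wh{a_1}(\xi):=\wh{\Theta}(\xi)-|\wh{a}(\xi)|^2\wh{\Theta}(\dm^{\tp}\xi)$ and, for $j\ge 2$, $\wh{a_j}(\xi):=-\ol{\wh{a}(\xi)}\wh{\Theta}(\dm^{\tp}\xi)\wh{a}(\xi+2\pi\om{j})$, the two moment identities just established give $\wh{a_1}(\xi)=\bo(\|\xi\|^{2m})$ at $\xi=0$ and, for $j\ge 2$, vanishing of $\wh{a_j}$ to order $m$ at both $\xi=0$ and $\xi=-2\pi\om{j}$ — the latter via $2\pi\dZ$-periodicity of $\wh{\Theta}$ together with the sum-rule vanishing of $\wh{a}$ at the coset of $-\om{j}$. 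Lemma~\ref{diff:vm} then factorizes each $\wh{a_j}$ as a sum of products of shifted difference filters $\nabla^\alpha\td$ and $\nabla^\beta\td$ with $|\alpha|=|\beta|=m$, the scalar specialization of \eqref{a1:fac}--\eqref{aj:fac}. Repackaging via the Fourier block $\FF_{1;\dm}$ as in \eqref{pr:coset:2} reduces the remaining task to decomposing the residual Hermitian polynomial matrices $\mathring{E}_{\alpha,\alpha}-E_\alpha$ and cross matrices $\mathring{E}_{\alpha,\beta}$ as signed squares $\ol{R_+}^{\tp}R_+-\ol{R_-}^{\tp}R_-$, which is always possible via the elementary identity $H=\tfrac14\bigl(\ol{(I+H)}^{\tp}(I+H)-\ol{(I-H)}^{\tp}(I-H)\bigr)$ valid for every Hermitian $H$. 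Assembling the resulting blocks into a single tall matrix $P_{b;\dm}$ produces $b\in\dlrs{0}{s}{1}$ and signs $\eps_1,\dots,\eps_s\in\{\pm 1\}$ for which \eqref{t:fbk} holds, proving item~(1).

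For item~(2), define $\mrphi$ and $\psi$ as in \eqref{mrphi:psi}. The normalization $\ol{\wh{\phi}(0)}\wh{\Theta}(0)\wh{\phi}(0)=|\wh{\theta}(0)\wh{\phi}(0)|^2=1$ is built into the choice of $\theta$, and every entry of $\wh{b}$ carries a factor $\wh{\nabla^\beta\td}$ with $|\beta|=m$, so $\wh{\psi}(\dm^{\tp}\xi)=\wh{b}(\xi)\wh{\phi}(\xi)=\bo(\|\xi\|^m)$. Theorem~\ref{thm:df} applied with $\tilde{\phi}=\phi$, $\tilde{\theta}=\theta$, and $\tilde{b}:=\DG(\eps_1,\ldots,\eps_s)b$ then delivers the desired compactly supported quasi-tight $\dm$-framelet $\{\mrphi;\psi\}_{(\eps_1,\dots,\eps_s)}$ in $\dLp{2}$, whose generator $\psi$ has order $m$ vanishing moments. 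The hard part is really the signed Hermitian decomposition in the previous paragraph: its positive-definite counterpart, which would produce a tight framelet, is obstructed in $d\ge 2$ by the failure of Fej\'er--Riesz, whereas allowing $\eps_\ell\in\{\pm 1\}$ turns the obstruction into an elementary algebraic identity that is always solvable.
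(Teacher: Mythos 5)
Your proposal is correct and takes essentially the same route as the paper, which proves this corollary only by pointing to the Hermitian matrix decomposition technique in the proof of Theorem~\ref{thm:qtf:nf}; you carry out exactly that specialization to $r=1$, including the right choice $\wh{\theta}(\xi)=1/\wh{\phi}(\xi)+\bo(\|\xi\|^{2m})$ (so that $\wh{a_1}(\xi)=\bo(\|\xi\|^{2m})$ and the $\wh{a_j}$, $j\ge 2$, vanish to order $m$ at the two relevant cosets) and the correct signed-square identity for Hermitian blocks. One phrase is loose but harmless: the cross matrices $\mathring{E}_{\alpha,\beta}$ with $\alpha\prec\beta$ are not themselves Hermitian, and in \eqref{pr:coset:2} they are handled by completing the square with all-positive signs and absorbing the resulting diagonal corrections into the Hermitian blocks $\mathring{E}_{\alpha,\alpha}-E_{\alpha}$, to which your identity $H=\tfrac14\bigl(\ol{(I+H)}^{\tp}(I+H)-\ol{(I-H)}^{\tp}(I-H)\bigr)$ then applies.
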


\subsection{The structure of OEP-based balanced multivariate quasi-tight multiframelets}

In this subsection, we investigate the structure of OEP-based balanced quasi-tight multiframelets. To derive a balanced quasi-tight multiframelet through OEP, the filter $\theta$ in Theorem~\ref{thm:qtf} plays a key role in our investigation. Hence it is important for us to understand the underlying structure that $\theta$ must satisfy. The proof of Theorem~\ref{thm:qtf} reveals some ideas on which strongly invertible $\theta\in\dlrs{0}{r}{r}$ serves as a desired filter for constructing balanced quasi-tight framelets. For simplicity of later discussion, we need the following definition.

\begin{definition}	
Let $\dn$ be a $d\times d$ integer matrix with $|\det(\dn)|=r\ge 2$ and $E_{\dn}$ be the vector conversion operator defined as \er{vec:con}.
Let $\dm$ be a $d\times d$ dilation matrix and $a\in \dlrs{0}{r}{r}$ be a filter associated to a compactly supported refinable vector function $\phi\in (\dLp{2})^r$ satisfying $\wh{\phi}(\dm^{\tp} \xi)=\wh{a}(\xi)\wh{\phi}(\xi)$.  Suppose that the filter $a$ has order $m$ sum rules with respect to $\dm$ satisfying \eqref{sr} with a matching filter $\vgu\in \dlrs{0}{1}{r}$ such that $\wh{\vgu}(0)\wh{\phi}(0)=1$. We say that $\theta\in\dlrs{0}{r}{r}$ is \emph{an order $m$ $E_{\dn}$-balanced moment correction filter} associated to the filter $a$ if $\theta$ is strongly invertible and there exist $b\in \dlrs{0}{s}{r}$, $\eps_1,\ldots,\eps_s\in \{\pm 1\}$ such that all claims in Theorem~\ref{thm:qtf} hold.
\end{definition}

A concrete characterization of balanced moment correction filters is given by the following theorem.

\begin{theorem}\label{thm:momfilter}
Let $\dn$ be a $d\times d$ integer matrix with $|\det(\dn)|=r\ge 2$ and define $E_{\dn}$ and $\wh{\Vgu_{\dn}}$ in \er{vec:con} and \er{vgu:special}, respectively.
Let $\dm$ be a $d \times d$ dilation matrix and $\phi\in (\dLp{2})^r$ be a compactly supported $\dm$-refinable vector function satisfying $\wh{\phi}(\dm^{\tp}\xi)=\wh{a}(\xi)\wh{\phi}(\xi)$ such that
$a\in\dlrs{0}{r}{r}$ has order $m$ sum rules with respect to $\dm$ with a matching filter $\vgu\in\dlrs{0}{1}{r}$ and $\wh{\vgu}(0)\wh{\phi}(0)\neq 0$. Then the following statements hold:

\begin{enumerate}
\item[(i)]
If  $\theta\in\dlrs{0}{r}{r}$ is strongly invertible and if
\er{ba:vgu:phi} and \er{mrphi:moment} hold with $n=2m$ and
\[
\wh{\mra}(\xi):=\wh{\theta}(\dm^{\tp}\xi)\wh{a}(\xi)\wh{\theta}(\xi)^{-1},\quad \wh{\mrvgu}(\xi):=\wh{\vgu}(\xi)\wh{\theta}(\xi)^{-1}\quad \mbox{and}\quad\wh{\mrphi}(\xi):=\wh{\theta}(\xi)\wh{\phi}(\xi),
\]
then $\theta$ is
an order $m$ $E_{\dn}$-balanced moment correction filter associated to $\phi$ and $a$.

\item[(ii)]
If  $\theta\in\dlrs{0}{r}{r}$ is an order $m$ $E_{\dn}$-balanced moment correction filter associated to $\phi$ and $a$, then $\theta$ is strongly invertible and \er{mrphi:moment} must hold with $n=2m$. If in addition
%
\be\label{eigen:a}\text{$1$ is a simple eigenvalue of $\wh{a}(0)$, and $\det(\lambda^{\pm\beta}I_r -\wh{a}(0))\ne 0$ for all $\beta\in\dN_0$ with $0<|\beta|<m$,}
\ee
	where $\lambda=(\lambda_1,\dots,\lambda_{d})$ is the vector of all the eigenvalues of $\dm$, and if
	 \be\label{mrv:sr}
\wh{p}(\dm^{\tp}\xi)\wh{\Vgu_{\dn}}(\dm^{\tp}\xi)\wh{\mra}(\xi)=\wh{p}(\xi)\wh{\Vgu_{\dn}}(\xi)+\bo(\|\xi\|^m)\text{ as }\xi\to 0,
\ee
for some $p\in\dlp{0}$ with $\wh{p}(0)\neq 0$,
then \er{ba:vgu:phi} must hold.
\end{enumerate}
\end{theorem}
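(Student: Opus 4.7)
The strategy is to verify the hypotheses of Theorem~\ref{thm:qtf:nf} and apply it directly. Strong invertibility of $\theta$ guarantees $\mra\in\dlrs{0}{r}{r}$, and applying Theorem~\ref{thm:normalform:gen}(ii) with $U=\theta$ shows that $\mrvgu = \vgu\wh{\theta}^{-1}$ is an order $m$ matching filter of $\mra$; the second identity in \er{ba:vgu:phi} is precisely the balanced form \er{bsr:mf}, so $\mra$ has order $m$ $E_{\dn}$-balanced sum rules. Combining both identities of \er{ba:vgu:phi} with \er{mrphi:moment} gives $\ol{\wh{\mrphi}(\xi)}^\tp = \wh{g}(\xi)\wh{\Vgu_{\dn}}(\xi)+\bo(\|\xi\|^m)$; taking conjugate transpose yields $\wh{\mrphi}(\xi) = \ol{\wh{g}(\xi)}\,\ol{\wh{\Vgu_{\dn}}(\xi)}^\tp + \bo(\|\xi\|^m)$, which is \er{bref} with $\wh{d}:=\ol{\wh{g}}$. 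Theorem~\ref{thm:qtf:nf} then produces $\mrb$ and signs $\eps_j$ so that $\{\mra;\mrb\}_{\td I_r,(\eps_1,\dots,\eps_s)}$ is a quasi-tight filter bank and $\psi$ has order $m$ vanishing moments. Setting $\wh{b}:=\wh{\mrb}\wh{\theta}$ turns \er{qtffb} into \er{t:fbk} by direct substitution. For item~(4), Theorem~\ref{thm:bp} reduces the task to \er{cond:bvmo} and \er{cond:ao}: the first follows from $\ol{\wh{\psi}(\dm^\tp\xi)}^\tp = \ol{\wh{\mrphi}(\xi)}^\tp\ol{\wh{\mrb}(\xi)}^\tp = \bo(\|\xi\|^m)$ together with the proportionality $\ol{\wh{\mrphi}(\xi)}^\tp = \wh{g}(\xi)\wh{\Vgu_{\dn}}(\xi)+\bo(\|\xi\|^m)$ (and $\wh{g}(0)\ne 0$); the second is obtained by applying $\wh{\mra}$ to \er{bref} via $\wh{\mra}\wh{\mrphi}=\wh{\mrphi}(\dm^\tp\cdot)$ and taking conjugate transpose, producing $\wh{c}(\xi) = \ol{\wh{g}(\xi)^{-1}\wh{g}(\dm^\tp\xi)}+\bo(\|\xi\|^m)$ with $\wh{c}(0)=1\neq 0$.

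\textbf{Plan for the moment identity in item (ii).} Strong invertibility is inherent in the definition of a balanced moment correction filter. For \er{mrphi:moment} with $n=2m$, I will left-multiply the filter bank identity \er{qtffb} at $\omega=0$ by $\ol{\wh{\mrphi}(\xi)}^\tp$ and right-multiply by $\wh{\mrphi}(\xi)$; using $\wh{\mra}(\xi)\wh{\mrphi}(\xi)=\wh{\mrphi}(\dm^\tp\xi)$ and $\wh{\mrb}(\xi)\wh{\mrphi}(\xi)=\wh{\psi}(\dm^\tp\xi)$, this produces
\[
\|\wh{\mrphi}(\dm^\tp\xi)\|^2 + \sum_{\ell=1}^s\eps_\ell|\wh{\psi_\ell}(\dm^\tp\xi)|^2 = \|\wh{\mrphi}(\xi)\|^2.
\]
Since $\psi$ has order $m$ vanishing moments, the middle term is $\bo(\|\xi\|^{2m})$, so $f(\xi):=\|\wh{\mrphi}(\xi)\|^2$ satisfies the functional relation $f(\dm^\tp\xi)=f(\xi)+\bo(\|\xi\|^{2m})$ as $\xi\to 0$. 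The normalization $f(0)=\ol{\wh{\phi}(0)}^\tp\wh{\Theta}(0)\wh{\phi}(0)=1$ (inherent in any OEP-based quasi-tight framelet, cf.\ \er{Theta}) combined with the Taylor-coefficient argument from the proof of Lemma~\ref{lem:vguphi} (all eigenvalues of $\dm^\tp$ have modulus $>1$, forcing the nontrivial homogeneous components of $f$ up to degree $2m-1$ to vanish) yields $f(\xi)=1+\bo(\|\xi\|^{2m})$.

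\textbf{Plan for the identity \er{ba:vgu:phi} in item (ii).} The main obstacle sits here: establishing uniqueness of the order $m$ matching filter of $\mra$ from only the $\omega=0$ sum-rule relation, which is exactly what the spectral hypothesis \er{eigen:a} is engineered to provide. Define $\wh{\mrvgu_*}(\xi):=\|\wh{\mrphi}(\xi)\|^{-2}\,\ol{\wh{\mrphi}(\xi)}^\tp$. Left-multiplying \er{qtffb} at each $\omega\in\Omega_{\dm}$ by $\ol{\wh{\mrphi}(\xi)}^\tp$ and using the order $m$ vanishing moments of $\psi$ gives
\[
\ol{\wh{\mrphi}(\dm^\tp\xi)}^\tp\wh{\mra}(\xi+2\pi\omega)=\td(\omega)\ol{\wh{\mrphi}(\xi)}^\tp+\bo(\|\xi\|^m);
\]
dividing by $\|\wh{\mrphi}(\dm^\tp\xi)\|^2=1+\bo(\|\xi\|^{2m})$ (and replacing $\ol{\wh{\mrphi}(\xi)}^\tp$ on the right by $\|\wh{\mrphi}(\xi)\|^2\wh{\mrvgu_*}(\xi)$) shows that $\mrvgu_*$ satisfies the full sum rules \er{sr} for $\mra$. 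On the other hand, \er{mrv:sr} is exactly the $\omega=0$ sum rule for the candidate $\wh{p}\wh{\Vgu_{\dn}}$. Under \er{eigen:a}, a standard Kronecker/Taylor-coefficient induction on the $\omega=0$ identity (as in \cite[Proposition~2.1]{han03}) shows that any two order $m$ matching filters of $\mra$ are scalar multiples of each other modulo $\bo(\|\xi\|^m)$: simplicity of $1$ as an eigenvalue of $\wh{\mra}(0)\sim\wh{a}(0)$ pins down the constant term up to scalar, and at each order $0<k<m$ the linear map $\wh{\Delta}_k(\xi)\mapsto\wh{\Delta}_k(\xi)-\wh{\Delta}_k(\dm^\tp\xi)\wh{\mra}(0)$ is invertible precisely when $\det(\lambda^{-\beta}I_r-\wh{\mra}(0))\ne 0$ for $|\beta|=k$. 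Applying this uniqueness to $(\mrvgu,\mrvgu_*)$ and normalizing via $\wh{\mrvgu}(0)\wh{\mrphi}(0)=\wh{\vgu}(0)\wh{\phi}(0)=1=\wh{\mrvgu_*}(0)\wh{\mrphi}(0)$ yields the first half of \er{ba:vgu:phi}; applying it to $(\mrvgu,\wh{p}\wh{\Vgu_{\dn}})$ yields the second half with $\wh{g}$ a nonzero scalar multiple of $\wh{p}$, so $\wh{g}(0)\ne 0$.
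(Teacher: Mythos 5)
Your proposal is correct. Items (i) and the first assertion of item (ii) follow essentially the paper's route: (i) is a reduction to Theorem~\ref{thm:qtf:nf} (the paper phrases this as ``following the lines of the proof of Theorem~\ref{thm:qtf}'', whose proof explicitly works whenever \er{ba:vgu:phi} and \er{mrphi:moment} hold with $n\ge 2m$), and \er{mrphi:moment} is obtained exactly as in the paper by sandwiching \er{qtffb} at $\omega=0$ between $\ol{\wh{\mrphi}}^\tp$ and $\wh{\mrphi}$ and running the eigenvalue/Taylor argument of Lemma~\ref{lem:vguphi}; you supply the normalization $\|\wh{\mrphi}(0)\|^2=1$ more explicitly than the paper does. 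Where you genuinely diverge is the derivation of \er{ba:vgu:phi} in item (ii). The paper routes through the balancing property: it uses item (4) of Theorem~\ref{thm:qtf}, the perfect-reconstruction identity on $E_{\dn}(\PL_{m-1})$, Theorems~\ref{sd:tz:pl} and~\ref{thm:bp} to show that $\wh{p}\wh{\Vgu_{\dn}}$ is a matching filter of $\mra$, then separately pins down $\wh{\mrphi}(\xi)=\wh{f}(\xi)\ol{\wh{\Vgu_{\dn}}(\xi)}^{\tp}+\bo(\|\xi\|^m)$ via an infinite product $\wh{f}=\prod_{j\ge 1}\ol{\wh{c}((\dm^{\tp})^{-j}\xi)}$ and chases the normalization. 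You instead observe directly that left-multiplying \er{qtffb} by $\ol{\wh{\mrphi}(\xi)}^\tp$ and using $\vmo(\psi)\ge m$ exhibits $\|\wh{\mrphi}\|^{-2}\ol{\wh{\mrphi}}^\tp$ as an order-$m$ matching filter of $\mra$, and then invoke the uniqueness (up to a constant scalar) of order-$m$ matching filters under \er{eigen:a} twice — once against $\wh{\mrvgu}$ and once against $\wh{p}\wh{\Vgu_{\dn}}$, where \er{mrv:sr} is precisely the needed $\omega=0$ sum-rule identity. This is shorter, avoids the discrete-transform machinery and the infinite product entirely, and as a bonus shows that the first identity in \er{ba:vgu:phi} already follows from \er{eigen:a} alone. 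Two cosmetic points: your uniqueness step yields $\wh{\mrvgu}=c\,\|\wh{\mrphi}\|^{-2}\ol{\wh{\mrphi}}^\tp+\bo(\|\xi\|^m)$ with $c=\wh{\mrvgu}(0)\wh{\mrphi}(0)=\wh{\vgu}(0)\wh{\phi}(0)$, so $c=1$ requires the standard normalization of the matching filter (the theorem's hypothesis literally only states $\wh{\vgu}(0)\wh{\phi}(0)\ne 0$; the paper's own proof makes the same implicit normalization when it invokes Lemma~\ref{lem:vguphi}); and the functions $\|\wh{\mrphi}\|^{-2}\ol{\wh{\mrphi}}^\tp$, $\wh{g}$, $\wh{c}$ appearing in your argument are not trigonometric polynomials, so one should (as the paper also tacitly does) replace them by trigonometric polynomials agreeing to order $m$ at the origin. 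Neither point is a gap.
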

	
	\bp Following the lines of the proof of Theorem~\ref{thm:qtf}, if $\theta$ is strongly invertible such that \er{ba:vgu:phi} and \er{mrphi:moment} hold with $n=2m$, then one can obtain $b\in\dlrs{0}{s}{r}$ and $\eps_1,\dots,\eps_s\in\{\pm1\}$ such that all claims of Theorem~\ref{thm:qtf} hold, which implies that $\theta$ must be an order $m$ $E_{\dn}$-balanced moment correction filter associated to $\phi$ and $a$.
This proves item (i).

Conversely, if $\theta$ is an order $m$ $E_{\dn}$-balanced moment correction filter associated to $a$, then there exist  $b\in\dlrs{0}{s}{r}$ and $\eps_1,\dots,\eps_s\in\{\pm1\}$ such that all claims of Theorem~\ref{thm:qtf} hold. In particular,
\be\label{pr:gamma:0}
\ol{\wh{\mra}(\xi)}^{\tp}\wh{\mra}(\xi)+\ol{\wh{\mrb}(\xi)}^{\tp}\DG(\eps_1,\dots,\eps_s)\wh{\mrb}(\xi)=I_r,
\ee
where $\wh{\mrb}(\xi):=\wh{b}(\xi)[\wh{\theta}(\xi)]^{-1}$. By multiplying $\ol{\wh{\mrphi}(\xi)}^{\tp}$ to the left and $\wh{\mrphi}(\xi)$ to the right on both sides of \er{pr:gamma:0}, and using item (1) of Theorem~\ref{thm:qtf}, we deduce that \er{mrphi:moment} holds with $n=2m$.

	By item (4) of Theorem~\ref{thm:qtf}, we see that \er{bp:highpass} holds with $b$ being replaced by $\mrb$ respectively. Consequently, we deduce from \er{sd:fourier}, \er{tz:fourier} and \er{qtffb} that for all $u\in E_{\dn}(\PL_{m-1})$,
\be\label{sd:tz:bp}	 \begin{aligned}\wh{u}(\xi)&=\sum_{j=1}^{d_{\dm}}\wh{u}(\xi+2\pi\om{j})\ol{\wh{\mra}(\xi+2\pi\om{j})}^{\tp}\wh{\mra}(\xi)+\sum_{j=1}^{d_{\dm}}\wh{u}(\xi+2\pi\om{j})\ol{\wh{\mrb}(\xi+2\pi\om{j})}^{\tp}\DG(\eps_1,\dots,\eps_s)\wh{\mrb}(\xi)\\		 &=d_{\dm}^{\frac{1}{2}}\wh{\tz_{\mra,\dm}u}(\dm^{\tp}\xi)\wh{\mra}(\xi)+d_{\dm}^{\frac{1}{2}}\wh{\tz_{\mrb,\dm}u}(\dm^{\tp}\xi)\DG(\eps_1,\dots,\eps_s)\wh{\mrb}(\xi)=\wh{\sd_{\mra,\dm}\tz_{\mra,\dm}u}(\xi).
	\end{aligned}
\ee

Suppose in addition that \er{eigen:a} and \er{mrv:sr} hold. Let $y\in \dlrs{0}{1}{r}$ satisfy
\be\label{y:p}
\wh{y}(\xi)=\wh{p}(\xi)\wh{\Vgu_{\dn}}(\xi)+\bo(\|\xi\|^m)\text{ where }p\text{ is the same as in \er{mrv:sr}}.
\ee
	By item (1) of Theorem~\ref{sd:tz:pl}, we have $E_{\dn}(\PL_{m-1})=\PL_{m-1,y}\subseteq (\PL_{m-1})^{1\times r}$. Thus by item (2) of Theorem~\ref{sd:tz:pl} and \er{sd:tz:bp}, we have
$$
\sd_{\mra,\dm}(\PL_{m-1,y})=\sd_{\mra,\dm}\tz_{\mra,\dm}E_{\dn}(\PL_{m-1})=E_{\dn}(\PL_{m-1})\subseteq (\PL_{m-1})^{1\times r}.
$$
	Hence by item (3) of Theorem~\ref{sd:tz:pl} and \er{mrv:sr}, $\mra$ has order $m$ sum rules with respect to $\dm$, with a matching filter $y\in\dlrs{0}{1}{r}$ satisfying \er{y:p}. On the other hand, since $\mra$ has order $m$ sum rules with a matching filter $\mrvgu$ with $\wh{\mrvgu}:=\wh{\vgu}\wh{\theta}^{-1}$, we have
\[
\wh{\mrvgu}(\dm^\tp \xi)\wh{\mra}(\xi)=\wh{\mrvgu}(\xi)+\bo(\|\xi\|^m),\quad \xi \to 0.
\]
Now the condition in \er{eigen:a} will force $\wh{\mrvgu}(\xi)=\wh{y}(\xi)+\bo(\|\xi\|^m)$
as $\xi \to 0$.

By our assumption in item (ii) on $\theta$, item (4) of Theorem~\ref{thm:qtf} holds. Hence, \eqref{cond:bvmo} and \eqref{cond:ao} of Theorem~\ref{thm:bp} hold with $a=\mra$ and $b=\mrb$. Multiplying  $\wh{\Vgu_{\dn}}(\xi)$ from the left-hand side of \eqref{qtffb} with $\omega=0$, we deduce from \eqref{cond:bvmo} and \eqref{cond:ao} that
\[
\wh{\Vgu_{\dn}}(\xi)=
\wh{\Vgu_{\dn}}(\xi) \ol{\wh{\mra}(\xi)}^\tp \wh{\mra}(\xi)+\bo(\|\xi\|^m)
=\wh{c}(\xi) \wh{\Vgu_{\dn}}(\dm^\tp \xi) \wh{\mra}(\xi)+\bo(\|\xi\|^m)
=\wh{c}(\xi)\frac{\wh{p}(\xi)}{\wh{p}(\dm^\tp \xi)} \wh{\Vgu_{\dn}}(\xi) +\bo(\|\xi\|^m)
\]
as $\xi\to 0$. Since $\wh{p}(0)\ne 0$ and $\wh{\Vgu}(0)\ne 0$, we conclude from the above identity that $\wh{c}(0)=1$.
Since $\wh{\mrphi}(\dm^\tp\xi)=\wh{\mra}(\xi)\wh{\mrphi}(\xi)$
and \eqref{cond:ao} holds with $\wh{c}(0)=1$ and $a=\mra$,
 \er{eigen:a} will force
\[
\wh{\mrphi}(\xi)=\wh{f}(\xi)\ol{\wh{\Vgu_{\dn}}(\xi)}^{\tp}+\bo(\|\xi\|^m),
\quad \xi \to 0
\]
with $\wh{f}(\xi):=\prod_{j=1}^\infty \ol{\wh{c}((\dm^\tp)^{-j} \xi)}$.
Note that $\wh{f}(0)=1$, $\wh{\mrvgu}(\xi)\wh{\mrphi}(\xi)=1+\bo(\|\xi\|^m)$ and $\|\wh{\mrphi}(\xi)\|^2=r\wh{f}(\xi)\ol{\wh{f}(\xi)}+\bo(\|\xi\|^m)$ as $\xi\to 0$. It follows that
$\wh{p}(\xi)=\frac{1}{r\wh{f}(\xi)}+\bo(\|\xi\|^m)$
as $\xi\to 0$.
Thus
\[
\wh{\mrvgu}(\xi)=
[r\wh{f}(\xi)]^{-1}\wh{\Vgu_{\dn}}(\xi)+\bo(\|\xi\|^m)=
\|\wh{\mrphi}(\xi)\|^{-2}\ol{\wh{\mrphi}(\xi)}^{\tp}
+\bo(\|\xi\|^m),\qquad\xi\to 0.
\]
Therefore, \er{ba:vgu:phi} holds. This proves item (ii).
\ep
	
Here we give an example to illustrate such an $E_{\dn}$-balanced moment correction filter $\theta$.

\begin{exmp}\label{expl}
Consider a compactly supported $M_{\sqrt{2}}$-refinable vector function $\phi=(\phi_1,\phi_2)^{\tp}$ given in \cite{goodman94} (see Figure~\ref{fig:psi} for details)
with
its refinement matrix filter $a\in(l_0(\Z^2))^{2\times 2}$ being given by
\be\label{mask:goodman}
\wh{a}(\xi_1, \xi_2):=\frac{1}{4}\begin{bmatrix}2 &1+e^{i\xi_1}+e^{i\xi_2}+e^{i(\xi_1+\xi_2)}\\
		2e^{-i\xi_1} & 0\end{bmatrix}
\quad \mbox{and}\quad
M_{\sqrt{2}}:=\begin{bmatrix}1 & 1\\
		1 & -1\end{bmatrix}.
\ee
	The filter $a$ has order $2$ sum rules with respect to $M_{\sqrt{2}}$, with a matching filter $\vgu\in(l_0(\Z^2))^{1\times 2}$ satisfying
\[
\wh{v}(\xi)=\left(1,1+\frac{i}{2}(\xi_1+\xi_2)\right)+\bo(\|\xi\|^2),\qquad \xi=(\xi_1,\xi_2)\to (0,0).
\]
	Let $\dn:=M_{\sqrt{2}}$. One can obtain an order $2$ $E_{\dn}$-balanced moment correction filter $\theta$ given by
\[
\wh{\theta}(\xi):=\begin{bmatrix}p_1(\xi) & p_2(\xi)\\
		p_3(\xi) & p_4(\xi)\end{bmatrix},\qquad \xi\in\R^2,
\]
where $p_1,p_2,p_3,p_4$ are the following $2\pi\Z^2$-periodic bivariate trigonometric polynomials:
\begin{align*}
&p_1(\xi):=\frac{\sqrt{2}}{272}
\left(542001-3225e^{-2i\xi_1}-7740e^{-i(\xi_1+\xi_2)}
-265735e^{-i\xi_1}+12267e^{_i\xi_2}
-4522e^{i(\xi_1-\xi_2)}-273258e^{i\xi_1}\right),\\
&p_2(\xi):=-\frac{\sqrt{2}}{17}e^{-i(\xi_1+\xi_2)}\left(645e^{-i\xi_1}-646\right),\\
&p_3(\xi):=\frac{\sqrt{2}}{3264}e^{i(\xi_1+\xi_2)}\left[(7740e^{-2i\xi_1}-12267e^{-i\xi_1}+4522)e^{-2i\xi_2}
-1075e^{-2i\xi_1}-89655e^{-i\xi_1}+90873\right.\\
&\qquad\qquad\qquad \qquad\qquad \left.+(3225e^{-3i\xi_1}+265735e^{-2i\xi_1}-544581e^{-i\xi_1}+274763)e^{-i\xi_2}
\right],\\
&p_4(\xi):=\frac{\sqrt{2}}{204}e^{-i\xi_1}(645e^{-i(\xi_1+\xi_2)}-646e^{-i\xi_2}-215).
\end{align*}
Define $\wh{\mrvgu}(\xi):=\wh{\vgu}(\xi)
\wh{\theta}(\xi)^{-1}$ and $\wh{\mrphi}(\xi):=\wh{\theta}(\xi)
\wh{\phi}(\xi)$, we have
$\|\wh{\mrphi}(\xi)\|^2=1+\bo(\|\xi\|^4)$ as $\xi\to(0,0)$ and
\[
\wh{\mrvgu}(\xi)=\ol{\wh{\mrphi}(\xi)}^{\tp}=-\frac{\sqrt{2}}{24}\left(12+429i\xi_1-i\xi_2, 12+435i\xi_1+5\xi_2\right)+\bo(\|\xi\|^2),\qquad\xi=(\xi_1,\xi_2)\to (0,0).
\]
By Theorem~\ref{thm:momfilter}, there exist $b\in (\dlp{0})^{s\times r}$ and $\eps_1,\ldots,\eps_s\in \{\pm 1\}$ such that all the claims in Theorem~\ref{thm:qtf} hold with $m=2$. For simplicity of presentation, we skip details about filters $b\in (\dlp{0})^{s\times r}$.
\end{exmp}

\begin{figure}[htb]
\centering		 \begin{subfigure}[b]{0.3\textwidth}			 \includegraphics[width=\textwidth]
{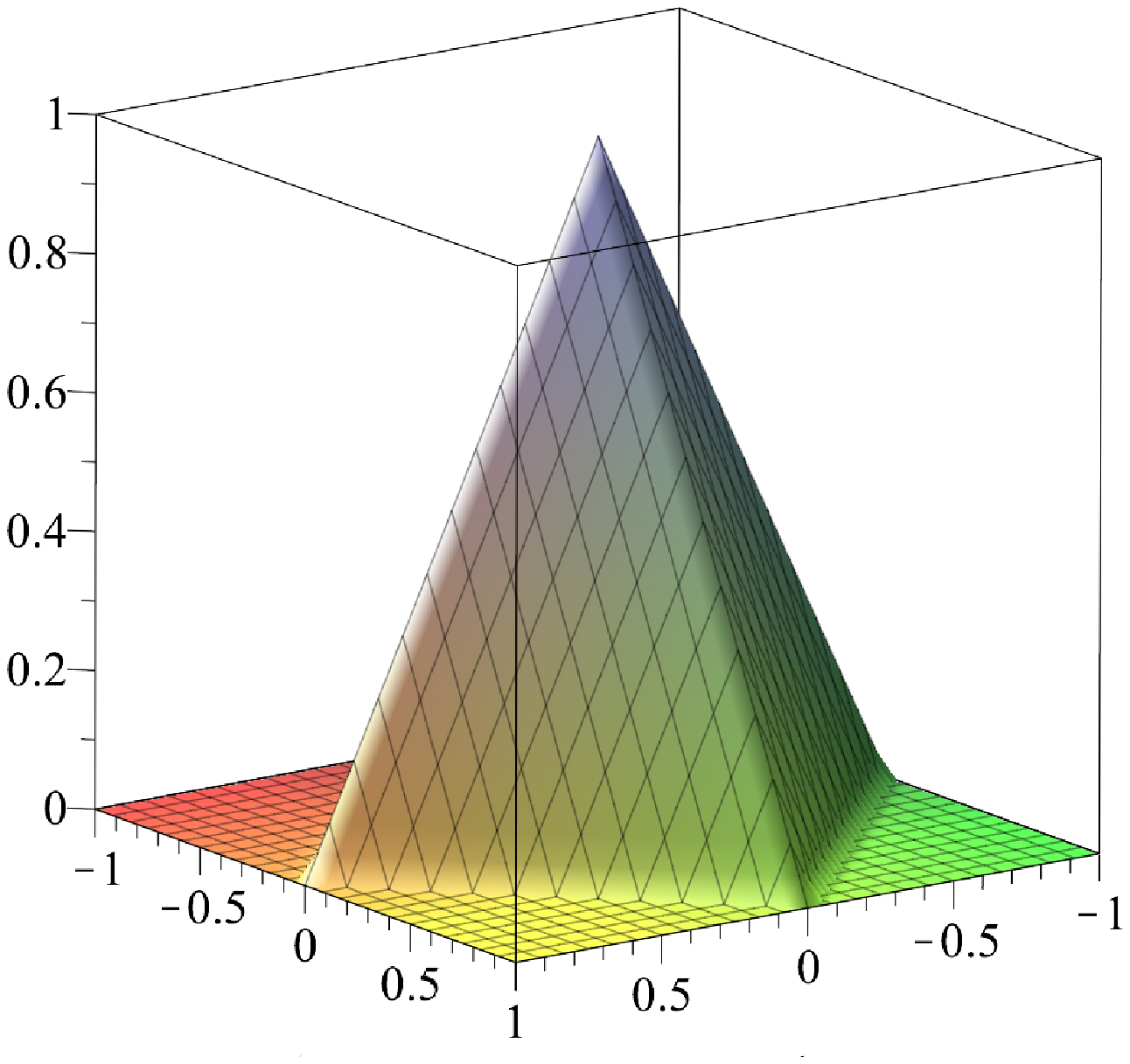}\caption{$\phi_1$} 
\end{subfigure}		 \begin{subfigure}[b]{0.3\textwidth} \includegraphics[width=\textwidth]{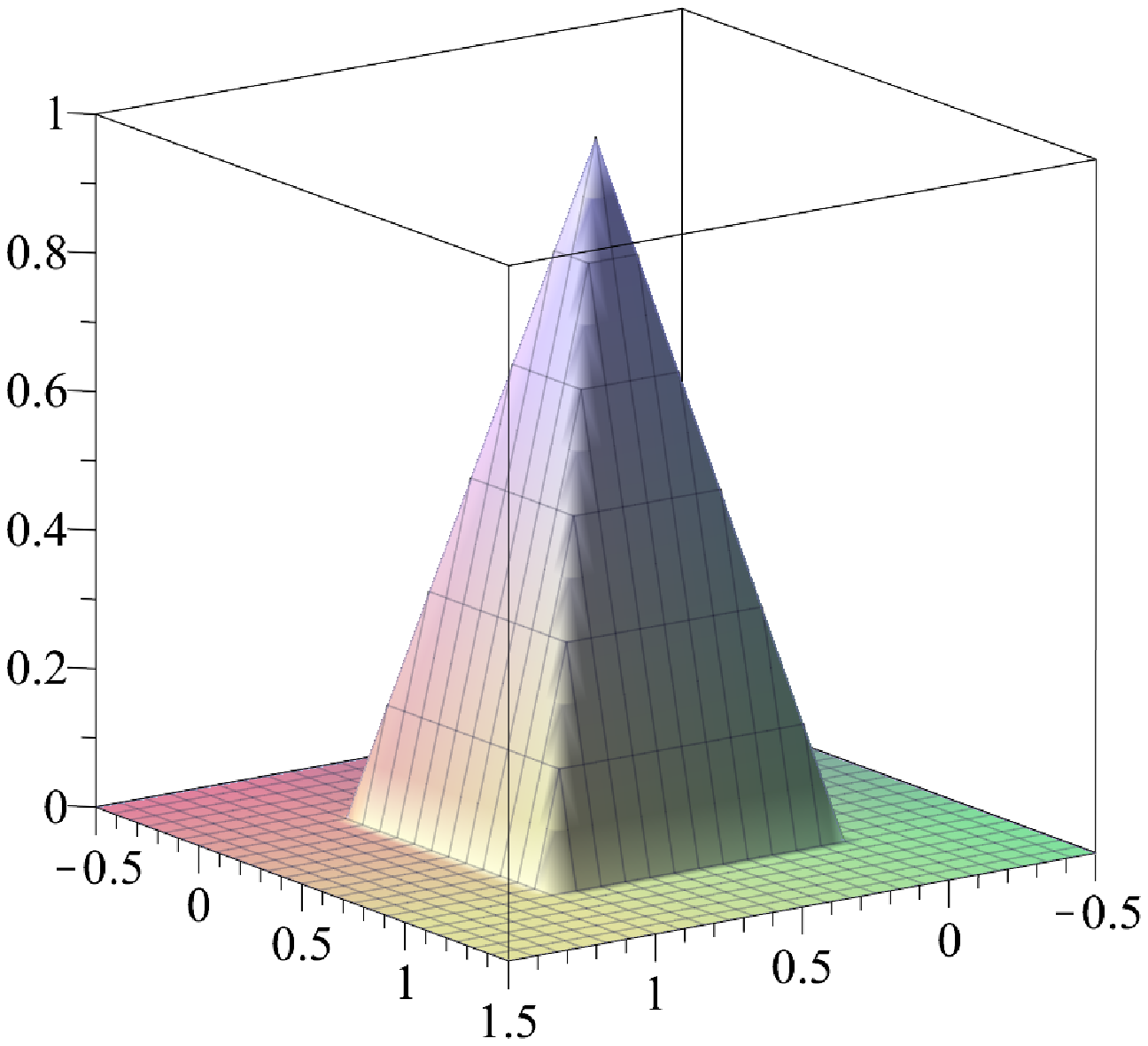}
\caption{$\phi_2$}
\end{subfigure}
\caption{The entries of the $M_{\sqrt{2}}$-refinable vector function $\phi=(\phi_1,\phi_2)^\tp$ in Example~\ref{expl}.}
\label{fig:psi}
\end{figure}

\subsection{Algorithm for constructing balanced multivariate quasi-tight framelets}

The characterization of balanced moment correction filters in Theorem~\ref{thm:momfilter} motivates us to establish an algorithm for constructing quasi-tight multiframelets with high balancing orders.

\begin{lemma}\label{vm:bo:0} Let $r\ge 2$ and $s\in\N$ be positive integers. Let $\dn$ be a $d\times d$ integer matrix with $|\det(\dn)|=r$ and define $E_{\dn}$ as in \er{vec:con}. Then $b\in\dlrs{0}{s}{r}$ has order $m$ $E_{\dn}$-balanced vanishing moments if and only if there exist $q_\beta\in\dlrs{0}{s}{1}$ for all $\beta\in\dN_{0;m}$ such that
\be\label{coset:bbq} \wh{b}(\xi)=\sum_{\beta\in\dN_{0;m}}Q_{q_\beta;\dn}(\xi)\wh{E_{\beta;\dn}}(\xi) \qquad
\mbox{with}\quad
\wh{E_{\beta;\dn}}:=E_{\nabla^\beta\td,0;\dn},
\ee
where $\dN_{0;m}:=\{\beta\in\dN_0:|\beta|=m\}$,
$E_{\nabla^\beta\td,0;\dn}$ is defined as in \er{Euni} with $u$ and $\dm$ being replaced by $\nabla^\beta\td$ and $\dn$, respectively, and $Q_{q_\beta;\dn}$ is defined in \er{Qu} with $u$ and $\dm$ being replaced by $q_\beta$ and $\dn$, respectively.
\end{lemma}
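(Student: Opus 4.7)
The plan is to reduce this vector-to-vector statement to the scalar result Lemma~\ref{diff:vm} via a canonical scalar lift of $b$. First I will introduce $B\in\dlrs{0}{s}{1}$ defined so that its $\dn$-cosets reproduce the columns of $b$: every $\eta\in\dZ$ admits a unique decomposition $\eta=\dn k+\ka{j}$ with $k\in\dZ$ and $j\in\{1,\ldots,r\}$, and I set $B(\eta):=b(k)_{\cdot,j}$. The resulting bijection $b\leftrightarrow B$ satisfies $\wh{b}(\xi)_{l,k}=\wh{B_l^{[\ka{k};\dn]}}(\xi)$ for every $l\in\{1,\ldots,s\}$ and $k\in\{1,\ldots,r\}$.

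Next I will verify that $b$ has order $m$ $E_{\dn}$-balanced vanishing moments if and only if the scalar lift $B$ has order $m$ (ordinary) vanishing moments. Substituting $\eta=\dn^{-\tp}\xi$ into the polyphase identity $\wh{B}(\eta)=\sum_{j=1}^{r}e^{-i\ka{j}\cdot\eta}\wh{B^{[\ka{j};\dn]}}(\dn^{\tp}\eta)$ yields $\wh{B}(\dn^{-\tp}\xi)=\wh{b}(\xi)\ol{\wh{\Vgu_{\dn}}(\xi)}^{\tp}$, which is the conjugate transpose of the vanishing-moment quantity in Theorem~\ref{thm:bp}(1). Since $\dn$ is invertible, this quantity vanishes to order $m$ at $\xi=0$ exactly when $\wh{B}(\eta)$ does at $\eta=0$. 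Applying Lemma~\ref{diff:vm} componentwise to the $s$ scalar entries of $B$ then gives the equivalence with $B=\sum_{\beta\in\dN_{0;m}}\nabla^\beta\td*q_\beta$ for some $q_\beta\in\dlrs{0}{s}{1}$.

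Finally I will translate this scalar convolution decomposition of $B$ into the coset formula \er{coset:bbq}. The required tool is the coset-convolution identity
\be\label{pp:ci}
\wh{(f*g)^{[\ka{k};\dn]}}(\xi)=\sum_{j=1}^{r}\wh{g^{[\ka{j};\dn]}}(\xi)\,\wh{f^{[\ka{k}-\ka{j};\dn]}}(\xi),
\ee
valid for scalar $f$ and any $s\times 1$ filter $g$, where $u^{[\gamma;\dn]}(\mu):=u(\gamma+\dn\mu)$ is extended to arbitrary $\gamma\in\dZ$. Applied with $f=\nabla^\beta\td$ and $g=q_\beta$, the right-hand side of \eqref{pp:ci} is precisely the $(l,k)$-entry of the matrix product $Q_{q_\beta;\dn}(\xi)\wh{E_{\beta;\dn}}(\xi)$. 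Summing over $\beta$ and using $\wh{b}(\xi)_{l,k}=\wh{B_l^{[\ka{k};\dn]}}(\xi)$ then identifies $B=\sum_\beta\nabla^\beta\td*q_\beta$ with \er{coset:bbq} entrywise, closing the chain of equivalences.

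The main obstacle will be keeping the index bookkeeping clean in the last step, since the shifts $\ka{k}-\ka{j}$ appearing in $\wh{E_{\beta;\dn}}$ typically lie outside $\Gamma_{\dn}$; the extended convention $u^{[\gamma;\dn]}(\mu):=u(\gamma+\dn\mu)$ for $\gamma\in\dZ$ absorbs the resulting lattice shift silently, so that the matrix product matches \eqref{pp:ci} with no spurious phase factors. Apart from this, the argument is a clean transfer of Lemma~\ref{diff:vm} through the bijection $b\leftrightarrow B$.
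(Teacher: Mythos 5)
Your proof is correct and follows essentially the same route as the paper's: both reduce the balanced vanishing-moment condition to the scalar characterization in Lemma~\ref{diff:vm} applied to the interleaved quantity $\sum_{j=1}^r e^{-i\ka{j}\cdot\xi}\wh{b_j}(\dn^\tp\xi)$ (your $\wh{B}$), and then transfer the result back through the coset decomposition. The only difference is presentational — you name the scalar lift $B$ and package the paper's Fourier-side reindexing (its equations \eqref{vm:b:r}--\eqref{coset:rep:3}) as a time-domain coset-convolution identity — which is a clean but equivalent way of doing the same bookkeeping.
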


\bp Suppose that $b$ has order $m$ $E_{\dn}$-balanced vanishing moments, i.e., \er{cond:bvmo} holds.
Recall that $\{\ka{1},\ldots,\ka{r}\}=\Gamma_{\dn}$ in \eqref{enum:gamma} with $\dm$ being replaced by $\dn$. For simplicity, we define $u^{[\ka{j}]}:=u^{[\ka{j};\dn]}$ for the $\ka{j}$ coset of $u$ with respect to $\dn$.
Let $\wh{b_j}$ be the $j$-th column of $\wh{b}$ for $j=1,\dots,r$. By Lemma~\ref{diff:vm}, there exist $q_\beta\in\dlrs{0}{s}{1}$ for all $\beta\in\dN_{0;m}$ such that
\be\label{vm:b:r}\begin{aligned}
\sum_{j=1}^r e^{-i\ka{j}\cdot\xi}
\wh{b_j}(\dn^\tp \xi)
=&\sum_{\beta\in\dN_{0;m}}\wh{\nabla^\beta\td}(\xi)\wh{q_\beta}(\xi)\\
=&\sum_{\beta\in\dN_{0;m}}
\left(\sum_{k=1}^r\wh{\nabla^{\beta}\td^{[\ka{k}]}}(\dn^{\tp}\xi)
e^{-i\ka{k}\cdot\xi}\right)
\left(\sum_{l=1}^r\wh{q_\beta^{[\ka{l}]}}(\dn^{\tp}\xi)
e^{-i\ka{l}\cdot\xi}\right)\\ =&\sum_{\beta\in\dN_{0;m}}
\sum_{k=1}^r\sum_{l=1}^r
\wh{\nabla^{\beta}\td^{[\ka{k}]}}(\dn^{\tp}\xi)
\wh{q_\beta^{[\ka{l}]}}(\dn^{\tp}\xi)
e^{-i(\ka{k}+\ka{l})\cdot\xi}.
\end{aligned}
\ee
For every pair of indices $k,l\in\{1,\dots,r\}$, there exist unique $\ka{k,l} \in\Gamma_{\dn}$ and $p_{k,l}\in\dZ$ such that
$\ka{k}+\ka{l}-\ka{k,l}=\dn p_{k,l}$.
Note that
$\wh{u^{[\ka{}]}}(\xi)e^{ip\cdot\xi}
=\wh{u^{[\ka{}+\dn p]}}(\xi)$ for all $\ka{},p\in\dZ$.
It follows that
\be\label{vm:b:r:2}
\begin{aligned}
\wh{\nabla^{\beta}\td^{[\ka{k}]}}(\dn^{\tp}\xi)
&\wh{q_\beta^{[\ka{l}]}}(\dn^{\tp}\xi)e^{-i(\ka{k}+\ka{l})
\cdot\xi}
=\wh{\nabla^{\beta}\td^{[\ka{k}]}}(\dn^{\tp}\xi)
\wh{q_\beta^{[\ka{l}]}}(\dn^{\tp}\xi)e^{-i\ka{k,l}\cdot\xi}
e^{-ip_{k,l}\cdot \dn^{\tp}\xi}\\	 =&\wh{\nabla^{\beta}\td^{[\ka{k}-\dn p_{k,l}]}}(\dn^{\tp}\xi)
\wh{q_\beta^{[\ka{l}]}}(\dn^{\tp}\xi)e^{-i\ka{k,l}\cdot\xi}=
\wh{\nabla^{\beta}\td^{[\ka{k,l}-\ka{k}]}}(\dn^{\tp}\xi)
\wh{q_\beta^{[\ka{l}]}}(\dn^{\tp}\xi)e^{-i\ka{k,l}\cdot\xi}.
\end{aligned}
\ee
Combining \er{vm:b:r} and \er{vm:b:r:2}, we have
\be\label{coset:rep:3}
\sum_{k=1}^r e^{-i\ka{k}\cdot\xi}
\wh{b_k}(\dn^\tp \xi)
=\sum_{k=1}^r\sum_{\beta\in\dN_{0;m}}
\sum_{l=1}^r\wh{\nabla^{\beta}\td^{[\ka{k}-\ka{l}]}}(\dn^{\tp}\xi)
\wh{q_\beta^{[\ka{l}]}}(\dn^{\tp}\xi)e^{-i\ka{k}\cdot\xi}.
\ee
Hence,
$\wh{b_{k}}(\xi)=\sum_{\beta\in\dN_{0;m}}
\sum_{l=1}^r\wh{\nabla^{\beta}\td^{[\ka{k}-\ka{l}]}}(\xi)
\wh{q_\beta^{[\ka{l}]}}(\xi)$ for all $k=1,\dots,r$
and \er{coset:bbq} follows immediately.

Conversely, if \er{coset:bbq} holds, then \er{coset:rep:3} must hold. By working out the above calculation backwards, we obtain \er{vm:b:r}, which precisely means that $b$ has order $m$ $E_{\dn}$-balanced vanishing moments.
\ep

The following result is an immediate consequence of Lemma~\ref{vm:bo:0}.
\begin{prop}\label{prop:b:mfilter}
Let $\dm$ be a $d\times d$ dilation matrix and
$\dn$ be a $d\times d$ integer matrix with $|\det(\dn)|=r\ge 2$ and define $E_{\dn}$ as in \er{vec:con}.
Let $\phi\in (\dLp{2})^r$ be
a compactly supported vector function  satisfying $\wh{\phi}(\dm^{\tp} \xi)=\wh{a}(\xi)\wh{\phi}(\xi)$ with
$a\in \dlrs{0}{r}{r}$.
Suppose $a$ has order $m$ sum rules with respect to $\dm$ satisfying \eqref{sr} with a matching filter $\vgu\in \dlrs{0}{1}{r}$ such that $\wh{\vgu}(0)\wh{\phi}(0)=1$.  If $\theta\in\dlrs{0}{r}{r}$ is an order $m$ $E_{\dn}$-balanced moment correction filter associated to $a$, by letting $\wh{\mra}(\xi)=\wh{\theta}(\dm^{\tp}\xi)\wh{a}(\xi)\wh{\theta}(\xi)^{-1}$ and
\be\label{m:mra:i}
\cM_{\mra}(\xi):=I_{d_{\dm}r}-\ol{P_{\mra;\dm}(\xi)}^{\tp}P_{\mra;\dm}(\xi),
\ee	
then there exist $A_{\alpha,\beta}\in\dlrs{0}{d_{\dm}r}{d_{\dm}r}$ for all $\alpha,\beta\in\dN_{0;m}$ with $\alpha\preceq\beta$ such that
\be\label{a:mu:herm}
\ol{\wh{A_{\alpha,\alpha}}(\xi)}^{\tp}=\wh{A_{\alpha,\alpha}}(\xi)
\ee
and
{\small \be\label{A:mu:nu}
\begin{aligned}
&d_{\dm}^{-2}\FF_{r;\dm}(\xi)\cM_{\mra}(\xi)\ol{\FF_{r;\dm}(\xi)}^{\tp}
=\sum_{\alpha\in\dN_{0;m}}
\ol{E_{E_{\alpha;\dn},0}(\dm^{\tp}\xi)}^{\tp}
\wh{A_{\alpha,\alpha}}(\dm^{\tp}\xi)
E_{E_{\alpha;\dn},0}(\dm^{\tp}\xi)+\\
&
\sum_{\alpha,\beta\in\dN_{0;m},\alpha\prec\beta}
\left[\ol{E_{E_{\alpha;\dn},0}(\dm^{\tp}\xi)}^{\tp}
\wh{A_{\alpha,\beta}}(\dm^{\tp}\xi)
E_{E_{\beta;\dn},0}(\dm^{\tp}\xi)
+\ol{E_{E_{\beta;\dm},0}(\dm^{\tp}\xi)}^{\tp}
\ol{\wh{A_{\alpha,\beta}(\dm^{\tp}\xi)}}^{\tp}
E_{E_{\alpha;\dn},0}(\dm^{\tp}\xi)\right],
\end{aligned}
\ee}
where $F_{r;\dm}$ is defined in \er{Fourier}, $E_{\beta;\dn}$ is defined via \er{coset:bbq}, and $E_{E_{\beta;\dn},0}:=E_{E_{\beta;\dn},0;\dm}$ is defined via \er{Euni} with $u$ and $\omega$ being replaced by $E_{\beta;\dn}$ and $0$ respectively.
\end{prop}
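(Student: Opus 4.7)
The strategy is to apply the quasi-tight framelet filter bank identity guaranteed by Theorem~\ref{thm:qtf} together with the structural decomposition of $E_{\dn}$-balanced high-pass filters from Lemma~\ref{vm:bo:0}. First, since $\theta$ is an order $m$ $E_{\dn}$-balanced moment correction filter, Theorem~\ref{thm:qtf} produces $b\in\dlrs{0}{s}{r}$ and $\eps_1,\ldots,\eps_s\in\{\pm 1\}$ such that, upon setting $\wh{\mrb}(\xi):=\wh{b}(\xi)\wh{\theta}(\xi)^{-1}$, the filter bank $\{\mra;\mrb\}_{\td I_r,(\eps_1,\ldots,\eps_s)}$ is quasi-tight and $\mrb$ has order $m$ $E_{\dn}$-balanced vanishing moments. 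Rewriting the filter bank condition via \er{spectral:1} (with $\Theta=\td I_r$) and the identity $\cM_{a,\Theta}(\xi)=\ol{\FF_{r;\dm}(\xi)}^{\tp}\cN_{a,a,\Theta}(\dm^{\tp}\xi)\FF_{r;\dm}(\xi)$, then conjugating by $d_\dm^{-1}\FF_{r;\dm}$ and invoking $P_{u;\dm}(\xi)\ol{\FF_{r;\dm}(\xi)}^{\tp}=d_\dm Q_{u;\dm}(\dm^{\tp}\xi)$ together with $\ol{\FF_{r;\dm}}^{\tp}\FF_{r;\dm}=d_\dm I_{d_\dm r}$, one arrives at
\[
d_\dm^{-2}\FF_{r;\dm}(\xi)\cM_{\mra}(\xi)\ol{\FF_{r;\dm}(\xi)}^{\tp}=\ol{Q_{\mrb;\dm}(\dm^{\tp}\xi)}^{\tp}\DG(\eps_1,\ldots,\eps_s)Q_{\mrb;\dm}(\dm^{\tp}\xi).
\]

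The key factorization step uses Lemma~\ref{vm:bo:0}: the balanced vanishing moments of $\mrb$ yield $q_\alpha\in\dlrs{0}{s}{1}$ for each $\alpha\in\dN_{0;m}$ with $\wh{\mrb}(\xi)=\sum_{\alpha}Q_{q_\alpha;\dn}(\xi)\wh{E_{\alpha;\dn}}(\xi)$. Introducing the filter $\tilde{q}_\alpha\in\dlrs{0}{s}{r}$ whose Fourier series equals $Q_{q_\alpha;\dn}$ and writing $\wh{\mrb^{(\alpha)}}(\xi):=\wh{\tilde{q}_\alpha}(\xi)\wh{E_{\alpha;\dn}}(\xi)$, evaluation at the shifts $\xi+2\pi\om{k}$ for $k=1,\ldots,d_\dm$ yields the factorization $P_{\mrb^{(\alpha)};\dm}(\xi)=P_{\tilde{q}_\alpha;\dm}(\xi)\,D_{E_{\alpha;\dn},0;\dm}(\xi)$. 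Right-multiplying by $d_\dm^{-1}\ol{\FF_{r;\dm}(\xi)}^{\tp}$, inserting one copy of $d_\dm^{-1}\ol{\FF_{r;\dm}(\xi)}^{\tp}\FF_{r;\dm}(\xi)=I$, and applying \er{DEF} together with the $P$-$Q$ identity produces
\[
Q_{\mrb;\dm}(\dm^{\tp}\xi)=\sum_{\alpha\in\dN_{0;m}}Q_{\tilde{q}_\alpha;\dm}(\dm^{\tp}\xi)\,E_{E_{\alpha;\dn},0;\dm}(\dm^{\tp}\xi).
\]

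Substituting this decomposition into the quadratic form and setting $\wh{\tilde{A}_{\alpha,\beta}}(\xi):=\ol{Q_{\tilde{q}_\alpha;\dm}(\xi)}^{\tp}\DG(\eps_1,\ldots,\eps_s)Q_{\tilde{q}_\beta;\dm}(\xi)$ expands the left-hand side of \er{A:mu:nu} as a double sum over $(\alpha,\beta)\in\dN_{0;m}\times\dN_{0;m}$ of terms $\ol{E_{E_{\alpha;\dn},0;\dm}(\dm^{\tp}\xi)}^{\tp}\wh{\tilde{A}_{\alpha,\beta}}(\dm^{\tp}\xi)E_{E_{\beta;\dn},0;\dm}(\dm^{\tp}\xi)$. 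The reality of every $\eps_j$ gives at once the conjugate-transpose relation $\ol{\wh{\tilde{A}_{\alpha,\beta}}}^{\tp}=\wh{\tilde{A}_{\beta,\alpha}}$; consequently $\wh{\tilde{A}_{\alpha,\alpha}}$ is automatically Hermitian, and each off-diagonal $(\alpha,\beta)$-summand (with $\alpha\prec\beta$) pairs with its $(\beta,\alpha)$-counterpart into precisely the Hermitian-symmetric expression appearing in \er{A:mu:nu}. Taking $A_{\alpha,\alpha}:=\tilde{A}_{\alpha,\alpha}$ and $A_{\alpha,\beta}:=\tilde{A}_{\alpha,\beta}$ for $\alpha\prec\beta$ establishes both \er{a:mu:herm} and \er{A:mu:nu}. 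I expect the main technical subtlety to lie in verifying the factorization $P_{\mrb^{(\alpha)};\dm}(\xi)=P_{\tilde{q}_\alpha;\dm}(\xi)D_{E_{\alpha;\dn},0;\dm}(\xi)$, which bridges the $\dn$-coset decomposition produced by Lemma~\ref{vm:bo:0} with the $\dm$-coset block structure inherent in $P_{\cdot;\dm}$; once that identity is in hand, the remainder reduces to routine bookkeeping with \er{DEF}.
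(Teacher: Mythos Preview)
Your proposal is correct and follows essentially the same approach as the paper: both invoke Theorem~\ref{thm:qtf} through the definition of an $E_{\dn}$-balanced moment correction filter to obtain a quasi-tight filter bank $\{\mra;\mrb\}_{\td I_r,(\eps_1,\ldots,\eps_s)}$ with $\mrb$ having order $m$ $E_{\dn}$-balanced vanishing moments, then apply Lemma~\ref{vm:bo:0} to decompose $\wh{\mrb}$, substitute into the filter bank identity, and symmetrize. The only cosmetic difference is that the paper first expresses $\cM_{\mra}$ via the blocks $D_{\cdot,\omega_j;\dm}$ and then conjugates by $\FF_{r;\dm}$ (deferring to the proof of Theorem~\ref{thm:qtf:nf} for the final symmetrization), whereas you conjugate first to reach the $Q$-form and decompose $Q_{\mrb;\dm}$ directly; your route is slightly more streamlined but not substantively different.
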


\bp Since $\theta$ is an order $m$ $E_{\dn}$-balanced moment correction filter associated to $\phi$ and $a$, there exist $b\in\dlrs{0}{s}{r}$ and $\eps_1,\dots,\eps_s\in\{\pm1\}$ such that all the claims of Theorem~\ref{thm:qtf} hold. In particular, by letting $\mrb:=b*\theta$, $\{\mra;\mrb\}_{\td I_r, (\eps_1,\dots,\eps_s)}$ is a quasi-tight $\dm$-framelet filter bank satisfying
\[
\cM_{\mra}(\xi)=\ol{P_{\mrb;\dm}(\xi)}^{\tp}\DG(\eps_1,\dots,\eps_s)P_{\mrb;\dm}(\xi).
\]
Moreover, the filter $\mrb$ has order $m$ $E_{\dn}$-balanced vanishing moments. So by Lemma~\ref{vm:bo:0}, there exist $q_\beta\in\dlrs{0}{s}{1}$ for all $\beta\in\dN_{0;m}$ such that \er{coset:bbq} holds with $b$ being replaced by $\mrb$. Define
\[
\wh{\mrb_j}(\xi):=\ol{\wh{\mrb}(\xi)}^{\tp}\DG(\eps_1,\dots,\eps_s)\wh{\mrb}(\xi+2\pi\om{j}),\qquad j=1,\dots,d_{\dm}.
\]
It follows that for $j=1,\dots,d_{\dm}$,
\[
\wh{\mrb_j}(\xi)=\sum_{\alpha,\beta\in\dN_{0;m}}\ol{\wh{E_{\alpha;\dn}}(\xi)}^{\tp}
\ol{Q_{q_\alpha;\dn}(\xi)}^{\tp}\DG(\eps_1,\dots,\eps_s)
Q_{q_\beta;\dn}(\xi+2\pi\omega_j)\wh{E_{\beta;\dn}}(\xi+2\pi\om{j}).
\]

For $j=1,\dots,d_{\dm}$, letting
\[
\wh{q_{\alpha,\beta,j}}(\xi):=\ol{Q_{q_\alpha;\dn}(\xi)}^{\tp}
\DG(\eps_1,\dots,\eps_s)Q_{q_\beta;\dn}(\xi+2\pi\om{j}),\qquad \alpha,\beta\in\dN_{0;m},
\]
we have
\be\label{fac:m:mra}
\begin{aligned}
\cM_{\mra}(\xi)=\sum_{j=1}^{d_{\dm}}D_{\mrb_j,\omega_j;\dm}(\xi)
=\sum_{j=1}^{d_{\dm}}\sum_{\alpha,\beta\in\dN_{0;m}}
\ol{D_{E_{\alpha;\dn},0;\dm}(\xi)}^{\tp}D_{q_{\alpha,\beta,j},\om{j};\dm}(\xi)
D_{E_{\beta;\dn},0;\dm}(\xi).
\end{aligned}
\ee
Note that the decomposition in \er{fac:m:mra} is similar to the one in \er{fac:maW}. Thus by applying the same idea as in the proof of Theorem~\ref{thm:qtf:nf}, one can obtain \er{A:mu:nu}.
\ep

We now provide an algorithm for constructing balanced multivariate quasi-tight framelets. This offers an alternative constructive proof to Theorem~\ref{thm:qtf} on multivariate quasi-tight framelets.

\begin{theorem} \label{bo:vm}	
Let $\dm$ be a $d\times d$ dilation matrix and $\phi\in (\dLp{2})^r$ be a compactly supported vector function satisfying $\wh{\phi}(\dm^{\tp} \xi)=\wh{a}(\xi)\wh{\phi}(\xi)$ with $a\in \dlrs{0}{r}{r}$.
	Suppose that the filter $a$ has order $m$ sum rules with respect to  $\dm$ satisfying \eqref{sr} with a matching filter $\vgu\in \dlrs{0}{1}{r}$ such that $\wh{\vgu}(0)\wh{\phi}(0)=1$.
	If $\dn$ is an $d\times d$ integer matrix with $|\det(\dn)|=r\ge 2$, then one can obtain $b\in \dlrs{0}{s}{r}$, $\eps_1,\ldots,\eps_s\in \{\pm 1\}$ and $\theta\in \dlrs{0}{r}{r}$ such that all claims of Theorem~\ref{thm:qtf} hold by implementing the following steps:
	
	\begin{enumerate}
		\item[(S1)]
Construct a strongly invertible $\theta\in\dlrs{0}{r}{r}$ such that \er{ba:vgu:phi} and \er{mrphi:moment} hold with $n=2m$, where $\wh{\mrvgu}(\xi):=\wh{\vgu}(\xi)
\wh{\theta}^{-1}(\xi)$ and $\wh{\mrphi}(\xi)=\wh{\theta}(\xi)\wh{\phi}(\xi)$.
		
		\item[(S2)]  Define $\wh{\mra}(\xi):=\wh{\theta}(\dm^{\tp}\xi)\wh{a}(\xi)\wh{\theta}(\xi)^{-1}$ and $\cM_{\mra}(\xi)$ as in \er{m:mra:i}. Apply Proposition~\ref{prop:b:mfilter} to find $A_{\alpha,\beta}\in\dlrs{0}{rd_{\dm}}{rd_{\dm}}$ for all $\alpha,\beta\in\dN_{0;m}$ with $\alpha\preceq\beta$ such that \er{a:mu:herm} and \er{A:mu:nu} hold.

		\item[(S3)] For all $\alpha,\beta\in\dN_{0;m}$ with $\alpha\prec\beta$, factorize
$\wh{A_{\alpha,\beta}}(\xi)=\ol{\wh{A_{\alpha,\beta,1}}(\xi)}^{\tp}\wh{A_{\alpha,\beta,2}}(\xi)
$
with $A_{\alpha,\beta,1},A_{\alpha,\beta,2}\in\dlrs{0}{d_{\dm}r}{d_{\dm}r}$.
		Find $B_{\alpha}\in\dlrs{0}{rd_{\dm}}{rd_{\dm}}$ for every $\alpha\in\dN_{0;m}$ such that $\ol{\wh{B_\alpha}(\xi)}^{\tp}=\wh{B_\alpha}(\xi)$ and
{\small
\begin{align*}
&\sum_{\alpha,\beta\in\dN_{0;m},\alpha\prec\beta}
\ol{\left(\wh{A_{\alpha,\beta,1}}(\xi)
E_{E_{\alpha;\dn},0}(\xi)+\wh{A_{\alpha,\beta,2}}(\xi)	 E_{E_{\beta;\dn},0}(\xi)\right)}^{\tp}
\left(\wh{A_{\alpha,\beta,1}}(\xi)	 E_{E_{\alpha;\dn},0}(\xi)+\wh{A_{\alpha,\beta,2}}(\xi) E_{E_{\beta;\dn},0}(\xi)\right)\\ &\qquad\qquad +\sum_{\alpha\in\dN_{0;m}}
\ol{E_{E_{\alpha;\dn},0}(\xi)}^{\tp}
\wh{B_{\alpha}}(\xi)
E_{E_{\alpha;\dn},0}(\xi)=\cN(\xi),
\end{align*}
}
where
$\cN(\dm^{\tp}\xi):=d_{\dm}^{-2}\FF_{r;\dm}(\xi)
\cM_{\mra}(\xi)\ol{\FF_{r;\dm}(\xi)}^{\tp}$ and
$E_{E_{\beta;\dn},0}:=E_{E_{\beta;\dn},0;\dm}$ is defined via \er{Euni} with $u$ and $\omega$ being replaced by $E_{\beta;\dn}$ and $0$, respectively.

\item[(S4)]
Define $\eps_{\alpha,\beta,k}=1$ and $\mrb_{\alpha,\beta,k}\in\dlrs{0}{1}{r}$ for $k=1,\dots,d_{\dm}r$ and $\alpha,\beta\in\dN_{0;m}$ with $\alpha\prec\beta$
via
\[
\wh{\mrb_{\alpha,\beta}}(\xi):=
\begin{bmatrix}\wh{\mrb_{\alpha,\beta,1}}(\xi)\\
			\vdots\\			 \wh{\mrb_{\alpha,\beta,d_{\dm}r}}(\xi)\end{bmatrix}	 :=\wh{A_{\alpha,\beta,1}}(\dm^{\tp}\xi)\FF_{r;\dm}(\xi)\begin{bmatrix}
\wh{E_{\alpha;\dn}}(\xi)\\
\pmb{0}_{(d_\dm-1)r \times r}		 \end{bmatrix}+\wh{A_{\alpha,\beta,2}}(\dm^{\tp}\xi)\FF_{r;\dm}(\xi)\begin{bmatrix}
\wh{E_{\beta;\dn}}(\xi)\\
\pmb{0}_{(d_\dm-1)r \times r}.
\end{bmatrix},
\]
For $\ell\in \{1,2\}$ and $k=1,\dots,d_{\dm}r$, define $\eps_{\ell;\alpha,k}=(-1)^{\ell+1}$ and  $\mrb_{\ell;\alpha,k}\in\dlrs{0}{1}{r}$ by
\[
\wh{\mrb_{\ell;\alpha}}(\xi):=\begin{bmatrix}\wh{\mrb_{\ell;\alpha,1}}(\xi)\\
\vdots\\			 \wh{\mrb_{\ell;\alpha,d_{\dm}r}}(\xi)\end{bmatrix}:=\left(p I_r-(-1)^\ell q (\wh{A_{\alpha,\alpha}}(\dm^{\tp}\xi)-\wh{B_\alpha}(\dm^{\tp}\xi))\right)
\FF_{r;\dm}(\xi)\begin{bmatrix}
\wh{E_{\alpha;\dn}}(\xi)\\
\pmb{0}_{(d_\dm-1)r \times r}
\end{bmatrix}
\]
for $\alpha\in\dN_{0;m}$, where $p,q\in\R$ satisfy $p+q=\frac{1}{4}$.
Define
%
\begin{align*}
\{(\mrb_{\ell},\eps_{\ell}):{\ell}=1,\dots,s\}:=
&\{(\mrb_{\alpha,\beta,k},\eps_{\alpha,\beta,k}):\alpha,\beta\in\dN_{0;m}\text{ with }\alpha\prec\beta, k=1,\dots,d_{\dm}r\}\\			 &\cup\{(\mrb_{\ell;\alpha,k},\eps_{\ell;\alpha,k}):\alpha\in\dN_{0;m}, k=1,\dots,d_{\dm}r,\ell=1,2\}.
\end{align*}
\end{enumerate}
Let $\mrb:=[\mrb_1^{\tp},\dots,\mrb_s^{\tp}]^{\tp}$ and $b=\mrb*\theta$.
Then $\{\mra;\mrb\}_{\td I_r, (\eps_1,\dots,\eps_r)}$ is a finitely supported quasi-tight $\dm$-framelet filter bank such that all the claims of Theorem~\ref{thm:qtf} hold.
\end{theorem}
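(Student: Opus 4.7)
The plan is to verify that each of the four steps (S1)--(S4) is realizable and that the resulting filter bank meets all of items (1)--(4) of Theorem~\ref{thm:qtf}; the argument parallels the constructive proof of Theorem~\ref{thm:qtf:nf} but is organized so that the $E_{\dn}$-balancing structure is exposed at every stage. First I will accomplish step (S1) by invoking Theorem~\ref{thm:normalform} with the prescribed targets $\wh{\mrvgu}(\xi)=\frac{1}{\sqrt{r}}\wh{\Vgu_{\dn}}(\xi)+\bo(\|\xi\|^m)$ and $\wh{\mrphi}(\xi)=\frac{1}{\sqrt{r}}\ol{\wh{\Vgu_{\dn}}(\xi)}^{\tp}+\bo(\|\xi\|^{2m})$; the relation $\wh{\mrvgu}(\xi)\wh{\mrphi}(\xi)=1+\bo(\|\xi\|^{2m})$ together with the almost-orthogonality assertion \er{eq:ortho} of Theorem~\ref{thm:normalform} forces both \er{ba:vgu:phi} and \er{mrphi:moment} with $n=2m$, and the resulting strongly invertible $\theta$ is exactly a balanced moment correction filter in the sense of Theorem~\ref{thm:momfilter}(i). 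Step (S2) then reduces to a direct invocation of Proposition~\ref{prop:b:mfilter} applied to the conjugated filter $\mra$, which supplies the Hermitian diagonal blocks $A_{\alpha,\alpha}$ and the off-diagonal blocks $A_{\alpha,\beta}$ satisfying \er{a:mu:herm} and \er{A:mu:nu}.

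The heart of the construction is step (S3). Any finitely supported factorization $\wh{A_{\alpha,\beta}}=\ol{\wh{A_{\alpha,\beta,1}}}^{\tp}\wh{A_{\alpha,\beta,2}}$ works (for instance $\wh{A_{\alpha,\beta,1}}=I_{d_{\dm}r}$ and $\wh{A_{\alpha,\beta,2}}=\wh{A_{\alpha,\beta}}$); expanding
\[
\sum_{\alpha\prec\beta}\ol{\bigl(\wh{A_{\alpha,\beta,1}}E_{E_{\alpha;\dn},0}+\wh{A_{\alpha,\beta,2}}E_{E_{\beta;\dn},0}\bigr)}^{\tp}\bigl(\wh{A_{\alpha,\beta,1}}E_{E_{\alpha;\dn},0}+\wh{A_{\alpha,\beta,2}}E_{E_{\beta;\dn},0}\bigr)
\]
reproduces the mixed cross-terms of \er{A:mu:nu} and produces extra diagonal self-interaction contributions of the form $\ol{E_{E_{\alpha;\dn},0}}^{\tp}\ol{\wh{A_{\alpha,\beta,1}}}^{\tp}\wh{A_{\alpha,\beta,1}}E_{E_{\alpha;\dn},0}$ and its $\beta$-analogue. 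Subtracting the pooled self-interaction contribution from $\wh{A_{\alpha,\alpha}}$ defines the Hermitian matrix $\wh{B_\alpha}$; its existence is unconditional, since no positivity is demanded of $B_\alpha$, and Hermiticity is automatically preserved through \er{a:mu:herm}. Step (S4) then assembles $\mrb$ and the signs $\eps_\ell$, and the verification that $\ol{P_{\mrb;\dm}(\xi)}^{\tp}\DG(\eps_1,\dots,\eps_s)P_{\mrb;\dm}(\xi)=\cM_{\mra}(\xi)$ reduces, via \er{DEF} and $P_{u;\dm}(\xi)=Q_{u;\dm}(\dm^{\tp}\xi)\FF_{r;\dm}(\xi)$, to a block-by-block computation identical in spirit to the one in the proof of Theorem~\ref{thm:qtf:nf}: the filters $\mrb_{\alpha,\beta}$ with sign $+1$ recover the cross-terms together with the self-interaction squares, while the paired filters $\mrb_{1;\alpha},\mrb_{2;\alpha}$ with opposite signs exploit the Hermitian difference-of-squares identity applied to $pI_r\pm q(\wh{A_{\alpha,\alpha}}-\wh{B_\alpha})$ (with $p,q$ chosen to match the required normalization) to reproduce the Hermitian diagonal contribution.

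Items (1)--(4) of Theorem~\ref{thm:qtf} then follow quickly: item (2) is given in (S1); item (3) for $\{\mra;\mrb\}_{\td I_r,(\eps_1,\dots,\eps_s)}$ is the identity just established, and for $\{a;b\}_{\Theta,(\eps_1,\dots,\eps_s)}$ with $\wh{\Theta}=\ol{\wh{\theta}}^{\tp}\wh{\theta}$ it follows by the change of variables \er{mab} using $b=\mrb*\theta$; item (1) is then a consequence of Theorem~\ref{thm:df} applied to this filter bank, with the vanishing moments of $\psi$ coming from the factor $\wh{E_{\alpha;\dn}}$ embedded in every row of $\mrb$; and item (4) follows from Theorem~\ref{thm:bp} combined with Lemma~\ref{vm:bo:0}, again because of that explicit factor $\wh{E_{\alpha;\dn}}$ with $|\alpha|=m$. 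The main obstacle will be the bookkeeping in step (S3): after pooling all self-interaction contributions coming from every pair with $\alpha\prec\beta$ or $\beta\prec\alpha$, one must verify that the residual $\wh{B_\alpha}$ is Hermitian, finitely supported, and ``plugs back'' correctly into the difference-of-squares identity of (S4) without introducing spurious moment defects. This is purely algebraic, but is the one place where the multivariate setting (and in particular the partial order $\prec$ on $\dN_{0;m}$) genuinely adds complication beyond the univariate construction encapsulated in the proof of Theorem~\ref{thm:qtf:nf}.
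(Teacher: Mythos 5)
Your proposal is correct and follows essentially the same route as the paper's own proof: (S1) is realized via the normal form theorem with the targets $\tfrac{1}{\sqrt r}\wh{\Vgu_{\dn}}$ and $\tfrac{1}{\sqrt r}\ol{\wh{\Vgu_{\dn}}}^{\tp}$, (S2) is a direct appeal to Proposition~\ref{prop:b:mfilter}, (S3)--(S4) repeat the Hermitian bookkeeping and signed difference-of-squares assembly from the proof of Theorem~\ref{thm:qtf:nf}, and items (1)--(4) are concluded from Theorem~\ref{thm:df}, Lemma~\ref{vm:bo:0} and Theorem~\ref{thm:bp}, exactly as the paper does. The only (harmless) deviations are that the almost-orthogonality \eqref{eq:ortho} is not actually needed to obtain \eqref{ba:vgu:phi} and \eqref{mrphi:moment} --- these follow directly from the prescribed normal-form targets --- and that you sensibly leave the normalization of $p,q$ open, since the Hermitian difference-of-squares identity in (S4) actually requires $4pq=1$.
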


\bp The existence of $\theta\in\dlrs{0}{r}{r}$ satisfying all the conditions in (S1) is guaranteed by Theorem~\ref{thm:normalform} (e.g. choose $\theta\in (\dlp{0})^{r\times r}$ such that \er{mr:vgu} and \er{mr:phi} hold).  So it is straightforward to see that item (2) of Theorem~\ref{thm:qtf} holds. Moreover, $\theta$ is an order $m$ $E_{\dn}$-balanced moment correction filter associated to $\phi$ and $a$. Thus, (S2) is justified by Proposition~\ref{prop:b:mfilter}. The filters $B_\alpha$ satisfying the identity in (S3) can be obtained by using the same idea as in the proof of Theorem~\ref{thm:qtf:nf}. Now define $\mrb$ as in (S4). By \er{DEF} and the identity $\ol{\FF_{r;\dm}(\xi)}^{\tp}\FF_{r;\dm}(\xi)=d_{\dm}I_{d_{\dm}r}$, we deduce that
\begin{align}
&P_{\mrb_{\alpha,\beta};\dm}(\xi) =\wh{A_{\alpha,\beta,1}}(\dm^{\tp}\xi)
E_{E_{\alpha;\dn},0}(\dm^{\tp}\xi)
\FF_{r;\dm}(\xi)+\wh{A_{\alpha,\beta,2}}(\dm^{\tp}\xi)
E_{E_{\beta;\dn},0}(\dm^{\tp}\xi)\FF_{r;\dm}(\xi),
\label{mrb:mu:nu:1}\\
&P_{\mrb_{\ell;\alpha};\dm}(\xi)
=\left(p I_r-(-1)^\ell q (\wh{A_{\alpha,\alpha}}(\dm^{\tp}\xi)
-\wh{B_\alpha}(\dm^{\tp}\xi))\right)
E_{E_{\alpha;\dn},0}(\dm^{\tp}\xi)
\FF_{r;\dm}(\xi),\label{mrb:1:mu}
\end{align}
for $\ell\in \{1,2\}$.
By \er{mrb:mu:nu:1} and \er{mrb:1:mu}, item (3) of Theorem~\ref{thm:qtf} can be verified by direct calculation.

Define $q_{\alpha,\beta,l},q_{l;\alpha}\in\dlrs{0}{d_{\dm}r}{1}$ for $\ell=1,2$ and for all  $\alpha,\beta\in\dN_{0;m}$ with $\alpha\prec\beta$ such that
%
\begin{align*}
&Q_{q_{\alpha,\beta,\ell};\dn}(\xi):=
\wh{A_{\alpha,\beta,\ell}}(\dm^{\tp}\xi)\FF_{r;\dm}(\xi)\begin{bmatrix}
		I_r\\
		\pmb{0}_{(d_\dm-1)r \times r}\end{bmatrix},\\
&Q_{q_{\ell;\alpha};\dn}(\xi):=\left(p I_r-(-1)^\ell q (\wh{A_{\alpha,\alpha}}(\dm^{\tp}\xi)-\wh{B_\alpha}(\dm^{\tp}\xi))\right)
\FF_{r;\dm}(\xi)\begin{bmatrix}
		I_r\\
		\pmb{0}_{(d_\dm-1)r \times r}
	\end{bmatrix}
\end{align*}
for $\ell\in \{1,2\}$.
We see that
$\wh{\mrb_{\alpha,\beta}}(\xi)=
Q_{q_{\alpha,\beta,1};\dn}(\xi)\wh{E_{\alpha;\dn}}(\xi)
+Q_{q_{\alpha,\beta,2};\dn}(\xi)\wh{E_{\beta;\dn}}(\xi)$
for all $\alpha,\beta\in\dN_{0;m}$ with $\alpha\prec\beta$, and
$\wh{\mrb_{\ell;\alpha}}(\xi)=Q_{q_{\ell;\alpha};\dn}(\xi)\wh{E_{\alpha;\dn}}(\xi)$
for all $\alpha\in\dN_{0;m}$ and $\ell=1,2$. Hence Lemma~\ref{vm:bo:0} implies that $\mrb$ has order $m$ $E_{\dn}$-balanced vanishing moments. Combining this fact with \er{mrphi:moment}, we conclude that items (1) and (4) of Theorem~\ref{thm:qtf} follow right away.
\ep

\section{Summary and Discussion}

In this paper, we provided a self-contained comprehensive investigation on OEP-based compactly supported multivariate quasi-tight multiframelets. From any compactly supported vector refinable function with multiplicity greater than one, we proved that one can always derive a compactly supported quasi-tight multiframelet via OEP such that: (1) its framelet generators have the highest possible orders of vanishing moments; (2) its underlying discrete multiframelet transform is compact and has the highest possible balancing order. We systematically studied the properties of a multi-level discrete framelet transform employing an OPE-based framelet filter bank in Section~\ref{sec:ffrt}. In Section~\ref{sec:normalform}, we further improved a normal form of a matrix-valued filter, which plays a key role in our study of multiframelets. Such a normal form of a matrix-valued filter greatly facilitates the study of refinable vector functions and multiframelets/multiwavelets.
We also provided
structural analysis of OEP-based quasi-tight multiframelets 
in Section~\ref{sec:qtf}.

One natural question is whether it is possible to construct a tight multiframelet satisfying all desired properties with the highest possible order of vanishing moments from any given compactly supported refinable vector function with multiplicity greater than one. Or at least from some special choices of refinable vector functions with multiplicity greater than one. This question is important and closely related to the spectral factorization of multivariate polynomial matrices, which is known to be a challenging problem.
The key for constructing quasi-tight framelets is the construction of an order $m$ $E_\dn$-balanced moment correction filter $\theta$ in Theorem~\ref{thm:momfilter}.
Though the existence of such $E_\dn$-balanced moment correction filters $\theta$ is guaranteed by Theorem~\ref{thm:momfilter}, such strongly invertible $E_\dn$-balanced filters $\theta$ constructed through the normal form of matrix-valued filters
often have long supports. 
It is of practical interest to develop an effective algorithm for constructing an order $m$
$E_\dn$-balanced moment correction filter $\theta$ with the shortest possible support for a given matrix-valued filter $a$. In this paper we didn't address any applications of multivariate quasi-tight framelets. It is important to explore such quasi-tight framelets in some applications for the future developments on quasi-tight framelets.
For applications,
it is also very crucial to construct compactly supported quasi-tight framelets with minimum number of generators and short supports.
We leave these problems for future research.

\end{document}